\numberwithin{equation}{section}
\newcommand{\R}{{\mathbb R}}\newcommand{\N}{{\mathbb N}}
\newcommand{\C}{{\mathbb C}}
\newcommand{\re}{\mathrm{e}}
\newcommand{\de}{\mathrm{d}}
\newcommand{\ri}{\mathrm{i}}
\renewcommand{\Re}{\operatorname{Re}}
\newcommand{\curlN}{\mathcal{N}}
\newcommand{\curlA}{\mathcal{A}}
\newcommand{\curlG}{\mathcal{G}}
\newcommand{\snorm}[2][]{\ensuremath{\left\vert#2\right\vert_{#1}}}
\newcommand{\norm}[2][]{\ensuremath{\left\|#2\right\|_{#1}}}
\let\epsilon\varepsilon
\let\theta\vartheta
\newtheorem{theorem}{Theorem}[section]\newtheorem{lemma}[theorem]{Lemma}
\newtheorem{proposition}[theorem]{Proposition}
\newtheorem{corollary}[theorem]{Corollary}
\newtheorem{remark}[theorem]{Remark}
\title{Nonlinear stability of periodic roll solutions in the real Ginzburg-Landau equation against $C_{\mathrm{ub}}^m$-perturbations}
\author{Bastian Hilder\thanks{Centre for Mathematical Sciences, Lund University, PO Box 118, 221 00 Lund, Sweden; \texttt{bastian.hilder@math.lu.se}}
\qquad Bj\"orn de Rijk\thanks{Karlsruhe Institute of Technology, Englerstra\ss e 2, 76131 Karlsruhe, Germany;  \texttt{bjoern.de-rijk@kit.edu}}
\qquad Guido Schneider\thanks{Institut f\"ur Analysis, Dynamik und Modellierung, Universit\"at Stuttgart, Pfaffenwaldring 57, 70569 Stuttgart, Germany; \texttt{guido.schneider@mathematik.uni-stuttgart.de}}
}
\begin{document}

\maketitle
\begin{abstract}
The real Ginzburg-Landau equation arises as a universal amplitude equation for the description of pattern-forming systems exhibiting a Turing bifurcation. It possesses spatially periodic roll solutions which are known to be stable against localized perturbations. It is the purpose of this paper to prove their stability against bounded perturbations, which are not necessarily localized. Since all state-of-the-art techniques rely on localization or periodicity properties of perturbations, we develop a new method, which employs pure $L^\infty$-estimates only. By fully exploiting the smoothing properties of the semigroup generated by the linearization, we are able to close the nonlinear iteration despite the slower decay rates. To show the wider relevance of our method, we also apply it to the amplitude equation as it appears for pattern-forming systems with an additional conservation law.
\newline\newline
\textbf{Keywords.} Ginzburg-Landau equation; nonlinear stability; nonlocalized perturbations; periodic roll solutions
\newline
\textbf{Mathematics Subject Classification (2020).} 35B10; 35B35; 35B40; 35Q56
\end{abstract}

\section{Introduction}

The nonlinear stability of periodic waves against localized perturbations in spatially extended systems had been an open problem for several decades. The main difficulty lies in the fact that the linearization of such a system about the periodic wave has, when posed on any natural space of localized functions, continuous spectrum, which touches the origin due to translational invariance. Thus, the semigroup generated by the linearization exhibits algebraic decay rates at best, which heavily complicates the nonlinear analysis. This contrasts with the case of co-periodic perturbations, where the linearization has discrete spectrum, so that standard orbital stability techniques apply leading to exponential decay rates.

The longstanding question of nonlinear stability against localized perturbations was first resolved in~\cite{CEE92} for periodic roll solutions
\begin{equation}\label{rolls}
\sqrt{1-q^2} \re^{\ri qx},
\end{equation}
with wavenumber $q \in (-1,1)$, in the real Ginzburg-Landau equation
\begin{equation}\label{GL}
\partial_t A = \partial_x^2 A + A - A|A|^2, \qquad A(x,t) \in \C,
\end{equation}
with $x \in \R$ and $t\geq 0$, which arises as a universal amplitude equation for the description of pattern-forming systems close to a Turing bifurcation, see~\cite{SU17book} and references therein. In the nonlinear stability analysis in~\cite{CEE92} one takes $q^2 < \frac{1}{3}$ so that the spectrum of the linearization of~\eqref{GL} about the periodic roll solution~\eqref{rolls} lies in the open left-half plane except for a parabolic touching with the origin. Upon crossing the so-called Eckhaus boundary at $q^2 = \frac{1}{3}$ the periodic roll solution undergoes a sideband destabilization, see Remark~\ref{rem:Eckhaus}. The analysis in~\cite{CEE92} is based on iterative estimates on the associated Duhamel formula, leading to nonlinear stability of the periodic rolls against localized perturbations for $q^2 < \frac{1}{3}$. Using renormalization techniques, the result in~\cite{CEE92} was later extended in~\cite{BK92,GM98} to prove stable diffusive mixing of the asymptotic states
\begin{align} \label{mixing} \sqrt{1-q_{\pm}^2}\, \re^{\ri q_{\pm}x + \beta_{\pm}},\end{align}
with different asymptotic wavenumbers $q_\pm^2 < \frac{1}{3}$ and phases $\beta_\pm \in \R$ for solutions that initially converge to these states as $x \to \pm \infty$.

The stability analyses in~\cite{BK92,CEE92,GM98} all rely on the principle that a sufficiently smooth nonlinearity improves localization, e.g.~if a function $u$ is $L^2$-localized then its square $u^2$ is $L^1$-localized. The gained localization can then be used to pick up algebraic decay from the semigroup, e.g.~the heat semigroup $\re^{\partial_x^2 t}$ decays at rate $t^{-1/4}$ as an operator from $L^1(\R)$ into $L^2(\R)$. We refer to~\cite[Section 3.1]{MSU01} and~\cite[Section~14.1.3]{SU17book} for an illustration of how this principle can be employed to close the nonlinear iteration.

In this paper we further extend the stability theory of periodic rolls in the real Ginzburg-Landau equation~\eqref{GL} by considering perturbations, which do not exhibit any form of localization or periodicity. Consequently, we cannot rely on the aforementioned principle of localization-induced decay. Instead, we present a completely new scheme, which relies on pure $L^\infty$-estimates only and fully exploits the smoothing properties of the semigroup, e.g.~the \emph{derivative} $\partial_x \re^{\partial_x^2 t}$ of the heat semigroup decays on $L^\infty(\R)$ with rate $t^{-\frac{1}{2}}$. Consequently, derivatives of bounded perturbations can be expected to decay. Since it turns out that the most critical terms in the nonlinearity of the perturbation equation contain derivatives, we are able to close the nonlinear iteration despite slower decay rates due to loss of localization-induced decay. We refer to~\S\ref{sec:ill} for an illustration of the main ideas of our pure $L^\infty$-scheme in a simple setting.

Thus, we establish nonlinear stability of the periodic solutions~\eqref{rolls} of the real Ginzburg-Landau equation~\eqref{GL} against bounded, sufficiently smooth perturbations. More precisely, the perturbations lie in the space $C_{\mathrm{ub}}^2(\R)$, where $C_{\mathrm{ub}}^m(\R)$, $m \in \mathbb{N}_0$ denotes the space of bounded and uniformly continuous functions, which are $m$ times differentiable and whose $m$ derivatives are also bounded and uniformly continuous. All in all, we establish the following result.

\begin{theorem}\label{thm:stabGL_intro}
Let $q^2 < 1/3$. Then, there exist $M_0, \varepsilon_0 > 0$ such that for all $\varepsilon \in (0,\varepsilon_0)$ and $v_0 \in C^2_{\mathrm{ub}}(\R,\C)$ with $\norm[W^{2,\infty}]{v_0} < \varepsilon$ there exists a global classical solution 
$$A \in  C^1\big([0,\infty),C_{\mathrm{ub}}(\R,\C)\big) \cap C\big([0,\infty), C^2_{\mathrm{ub}}(\R,\C)\big),$$ 
of the real Ginzburg-Landau equation~\eqref{GL} with initial condition
$$A(x,0) = \sqrt{1-q^2} \,\re^{\ri qx} + v_0(x),$$
satisfying
\begin{align*}
\sup_{x \in \R} \left|A(x,t) - \sqrt{1-q^2}\, \re^{\ri q x}\right| \leq M_0 \varepsilon,
\end{align*}
for all $t \geq 0$.
\end{theorem}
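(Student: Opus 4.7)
The plan is to reduce the problem to a constant-coefficient perturbation equation, decompose the linearization spectrally into a critical (diffusive) and a stable (exponentially decaying) part, and close a Duhamel iteration in $L^\infty$ exploiting the spatial smoothing of the critical semigroup.

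First I would substitute $A(x,t) = (\sqrt{1-q^2} + B(x,t))\re^{\ri qx}$ with $c = \sqrt{1-q^2}$, yielding the constant-coefficient semilinear equation
$$\partial_t B = L_q B + N(B),\qquad L_q B = \partial_x^2 B + 2\ri q\,\partial_x B - 2c^2\,\Re(B),$$
with polynomial $N(B) = -c|B|^2 - 2cB\,\Re(B) - B|B|^2$. Local well-posedness in $C^2_{\mathrm{ub}}$ is standard since $L_q$ is sectorial on $C_{\mathrm{ub}}$, and the theorem then reduces to an a priori uniform $L^\infty$-bound on $B$. Identifying $\C\cong\R^2$, the Fourier symbol of $L_q$ has eigenvalues $\lambda_\pm(k) = -(k^2+c^2)\pm\sqrt{c^4+4q^2 k^2}$: for $q^2 < 1/3$ the branch $\lambda_-$ stays uniformly bounded away from the imaginary axis, while $\lambda_+(k) = -\tfrac{1-3q^2}{1-q^2}k^2 + O(k^4)$ vanishes only at the Goldstone wavenumber $k=0$ (generated by the phase-rotation symmetry $A\mapsto \re^{\ri\varphi}A$) and is strictly negative elsewhere. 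Using smooth compactly supported Fourier cutoffs together with the eigenprojection onto $\lambda_+$, I would construct mode filters $E_c$ and $E_s = I - E_c$ — bounded on every $C^m_{\mathrm{ub}}$ since their convolution kernels are Schwartz — and split $B = B_c + B_s$.

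The core linear estimates, derived from the explicit Fourier representation via Gaussian comparison on the critical branch and the uniform spectral gap on the stable branch, are
\begin{align*}
\norm[L^\infty]{\partial_x^j \re^{L_c t} f} &\leq C(1+t)^{-j/2}\norm[L^\infty]{f},\qquad j = 0, 1, 2,\\
\norm[W^{2,\infty}]{\re^{L_s t} f} &\leq C\re^{-\delta t}\norm[L^\infty]{f}.
\end{align*}
The key structural point is that $\re^{L_c t}$ does not decay on bounded data, but every spatial derivative costs a factor $t^{-1/2}$ — so \emph{derivatives} of $B_c$ decay in time. With these bounds at hand I would set up a bootstrap with template
$$\eta(T) = \sup_{0\leq t\leq T}\Bigl(\norm[L^\infty]{B_c(t)} + (1+t)^{1/2}\norm[L^\infty]{\partial_x B_c(t)} + \re^{\delta t/2}\norm[W^{2,\infty}]{B_s(t)}\Bigr),$$
write $B_c, B_s$ via Duhamel and estimate each contribution. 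The $B_s$-part closes immediately using the exponential decay of $\re^{L_s t}$. For the critical $B_c$-part, each quadratic or cubic term in $E_cN(B)$ must be arranged so that at least one spatial derivative sits either on the semigroup — giving a $(T-s)^{-1/2}$-factor — or on a $B_c$-factor — giving a $(1+s)^{-1/2}$-factor — producing Duhamel kernels of type $(T-s)^{-1/2}(1+s)^{-1/2}$ whose convolutions are uniformly bounded in $T$. The resulting inequality $\eta(T) \leq C\varepsilon + C\eta(T)^2$ together with a continuity argument gives $\eta(T) \leq 2C\varepsilon$ for all $T\geq 0$, hence global existence of $A$ and the uniform $L^\infty$-bound claimed in the theorem.

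The main obstacle is the critical self-interaction of $B_c$. Crudely estimating the integrand $\partial_x\re^{L_c(T-s)}(B_c(s)^2)$ by $(T-s)^{-1/2}\norm[L^\infty]{B_c(s)}^2 \sim \varepsilon^2(T-s)^{-1/2}$ yields $\int_0^T \varepsilon^2(T-s)^{-1/2}\de s \sim \varepsilon^2\sqrt T$, which destroys the bootstrap. The fix will rely on combining the Burgers-type product identity $\partial_x(B_c^2) = 2B_c\,\partial_x B_c$ with the phase/translation symmetry of the Ginzburg-Landau nonlinearity: this should allow the critical projection of every quadratic term to be written in a divergence form in which one factor is a spatial derivative of $B_c$, reducing the integrand to $\varepsilon^2(T-s)^{-1/2}(1+s)^{-1/2}$. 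Making this derivative-gain scheme work for every quadratic term — including the mixed $B_cB_s$ interactions — as well as for the cubic terms, without appealing to any Cole-Hopf-like transformation or to localization, is the technical heart of the new pure $L^\infty$-scheme.
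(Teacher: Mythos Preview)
Your overall architecture---spectral splitting into a diffusive critical mode and an exponentially damped remainder, followed by an $L^\infty$-Duhamel bootstrap that exploits smoothing of the critical semigroup---is the paper's scheme. The substantive difference is the choice of perturbation variable. You work additively with $B = A\re^{-\ri qx} - c$, whereas the paper writes the solution in polar form $A = c\,\re^{\ri qx}\re^{r+\ri\phi}$ and then replaces the phase by the local wavenumber $\psi = \partial_x\phi$. This is not cosmetic: gauge invariance means the $(r,\phi)$-system depends on $\phi$ only through $\partial_x\phi$, so passing to $V=(r,\psi)$ yields a system whose nonlinearity is built entirely from variables that already decay like $t^{-1/2}$, and the only undifferentiated nonlinear terms land in the linearly exponentially damped $r$-component. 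The paper then proves a refined bound (their estimate~(3.10)): $S_c(t)$ applied to vectors of the form $(g,0,0)^\top$ gains an extra $(1+t)^{-1/2}$, because at $k=0$ the vector $(1,0,0)^\top$ lies entirely in the stable eigenspace. This closes the iteration with no further algebraic manipulation of the nonlinearity.

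In your additive variables this structure is hidden, and your proposed fix does not recover it. The Burgers identity $\partial_x(B_c^2)=2B_c\,\partial_xB_c$ goes the wrong way: it turns a derivative of a product into a product containing a derivative, but you need to \emph{manufacture} a derivative in $E_cN(B)$ where none is visible. The mechanism that actually works in your coordinates is Fourier-side: since $P_+(0)$ projects onto the purely imaginary direction, $E_c$ applied to the real quantity $-c|B|^2$ vanishes at $k=0$ and hence gains a factor $k$; together with the fact that $\Re(B_c)\sim\partial_x\Im(B_c)$ at leading order (from the $k$-dependence of the critical eigenvector), the remaining quadratic terms involving $\Re(B_c)$ already carry a derivative. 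This is precisely the content of the paper's refined semigroup estimate, translated back to additive coordinates---but your proposal does not identify it. A second, smaller gap: your weight $\re^{\delta t/2}$ on $B_s$ is too strong. The Duhamel integral $\int_0^t\re^{-\delta(t-s)}E_sN(B(s))\,\de s$ is fed by a nonlinearity that is merely $O(\eta(t)^2)$ and does not decay, so $B_s(t)$ settles to $O(\eta(t)^2)$ rather than decaying exponentially; your template cannot close as written. The paper avoids this by splitting only the semigroup, not the solution itself.
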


The proof of Theorem~\ref{thm:stabGL_intro} is a direct consequence of the upcoming Theorem~\ref{thm:stabmodGL}, as will be explained below.

In contrast to the stability result against localized perturbations in~\cite{CEE92}, the perturbation in Theorem~\ref{thm:stabGL_intro} does not decay in $L^\infty(\R)$ as $t \to \infty$, i.e.~we do not obtain \emph{asymptotic} nonlinear stability. Indeed, the initial perturbation $v_0(x) = \sqrt{1-q^2} \,\re^{\ri q x}\left(\re^{\ri \theta} - 1\right)$, whose $W^{2,\infty}$-norm can be taken arbitrarily small by taking $\theta \in (-\pi,\pi] \setminus \{0\}$ sufficiently close to $0$, yields a spatial translate of the periodic wave which, being a solution of~\eqref{GL} itself, does not converge to~\eqref{rolls} in $L^\infty(\R)$ as $t \to \infty$. In fact, similar behavior arises in the simpler heat equation $\partial_t u = \partial_x^2 u$. Here, solutions with small initial data in $L^\infty(\R)$ stay small, but do not necessarily decay, which can readily be seen by considering constant solutions. However, derivatives of solutions of the heat equation with initial conditions in $L^\infty(\R)$ do decay due to the smoothing properties of the associated semigroup. This can also be observed in the real Ginzburg-Landau equation~\eqref{GL}. By writing the perturbed solution in Theorem~\ref{thm:stabGL_intro} in polar form
\begin{align}A(x,t) = \sqrt{1-q^2}\,\re^{\ri q x} \re^{r(x,t) + \ri \phi(x,t)}, \label{eq:polar}\end{align}
with $r(x,t)$ and $\phi(x,t)$ real-valued functions, one can show that $r(x,t)$ and $\partial_x \phi(x,t)$ decay in $L^\infty(\R)$. We refer to~\S\ref{sec:GL} for the precise statement. 

To our best knowledge, we are not aware of any nonlinear stability result of periodic waves in spatially extended systems against $C_{\mathrm{ub}}^m$-perturbations. That is, we believe that Theorem~\ref{thm:stabGL_intro} goes beyond the current state-of-the-art by lifting any localization or periodicity requirement on perturbations, see Remark~\ref{rem:new}. Moreover, due to the role of the real Ginzburg-Landau equation as a universal amplitude equation, we strongly expect that our $L^\infty$-scheme can be extended to handle the nonlinear stability of periodic waves against $C_{\mathrm{ub}}^m$-perturbations in various pattern-forming and hydrodynamical systems. With this regard we emphasize that the nonlinear stability results~\cite{BK92,CEE92,GM98} in the real Ginzburg-Landau equation~\eqref{GL} against localized perturbations have also been extended to numerous systems such as the Swift-Hohenberg equation~\cite{Schn96}, the Taylor-Couette problem~\cite{Schn98}, the inclined film problem~\cite{UEC} and general reaction-diffusion systems~\cite{JONZW,JONZ,SAN3,Schn97}.

\begin{remark} \label{rem:new}
{\rm
Although the authors are not aware of any nonlinear stability result for periodic waves in spatially extended systems against $C_{\mathrm{ub}}^m$-perturbations, nonlocalized modulations of phase or wavenumber and partially nonlocalized perturbations have been considered in the literature~\cite{BK92,RS18,GM98,IYSA,JONZW,SAN3}. However, all of these result still crucially rely on localization-induced decay. Indeed, although the planar perturbations in~\cite{RS18} are nonlocalized along a line in $\R^2$, they are required to decay exponentially in distance from that line. Moreover, the initial phase or wavenumber off-set of the modulated periodic solutions can be nonlocalized in~\cite{BK92,GM98,IYSA,JONZ,SAN3}, but it must converge sufficiently fast to asymptotic limits as $x \to \pm \infty$ yielding localization of its derivative.
}
\end{remark}

\begin{remark} \label{rem:Eckhaus}
{\rm
Nonlinear stability against localized perturbations of periodic rolls~\eqref{rolls} in the real Ginzburg-Landau equation~\eqref{GL} exactly at the Eckhaus boundary, i.e.~at $q^2 = \frac{1}{3}$, has been established in~\cite{GSWZ}. At $q^2 = \frac{1}{3}$ the spectrum of the linearization is at the threshold of a sideband destabilization and thus touches the origin in a quartic tangency. Hence, the associated semigroup exhibits algebraic decay at rate $t^{-1/4}$ as an operator of $L^1(\R)$ into $L^\infty(\R)$, as opposed to the decay at rate $t^{-1/2}$ which arises for $q^2 < \frac{1}{3}$. The fact that the nonlinear iteration can be closed despite weaker localization-induced decay indicates that it might not be crucial for the argument. This is confirmed by the findings in this paper, where nonlinear stability is established without exploiting localization-induced decay.
}\end{remark}

\subsection{Additional conservation law}

There are many physically interesting pattern-forming systems which exhibit a conservation law, such as the B\'enard-Marangoni problem~\cite{TAK}, the flow down an inclined plane~\cite{CD02}, or the Faraday experiment~\cite{ANR14}. For many such systems the dynamics close to a Turing bifurcation cannot be described by the real Ginzburg-Landau equation. Instead, the \emph{modified Ginzburg-Landau system}
\begin{align}
\label{eq:modGL}
\begin{split}
		\partial_t A &= \partial_x^2 A + A + AB - A \snorm{A}^2, \\
		\partial_t B &= D \partial_x^2 B + \gamma \partial_x^2(\snorm{A}^2),
\end{split} \qquad A(x,t) \in \C, \, B(x,t) \in \R,
\end{align}
with parameters $D > 0$ and $\gamma \in \R$, arises as a generic amplitude equation for such pattern-forming systems with an additional conservation law, cf.~\cite{HSZ11,MC00,SZ17}. Its simplest patterns are periodic rolls of the form
\begin{align}\left(\sqrt{1-q^2} \,\re^{\ri qx},0\right),\label{eq:solsmodGL}\end{align}
with wavenumber $q \in (-1,1)$, see also Remark~\ref{rem:2par}. The presence of an additional conservation law complicates the nonlinear stability analysis of the periodic rolls, since the spectrum of the linearization possesses multiple curves touching the imaginary axis at the origin. This yields multiple critical modes, whose interactions could obstruct a nonlinear stability argument. 

Nevertheless, nonlinear stability of the periodic rolls~\eqref{eq:solsmodGL} in the modified Ginzburg-Landau system~\eqref{eq:modGL} against localized mean-zero perturbations has been obtained in~\cite{SZ17}. Here, one assumes that the wavenumber and system parameters satisfy
\begin{align} D + \gamma - \frac{2Dq^2}{1 - q^2} > 0, \qquad q^2 < \frac{1}{3}, \label{e:spec}\end{align}
so that the periodic wave~\eqref{eq:solsmodGL} is spectrally stable, see the upcoming Lemma~\ref{lem:spec_cont} and Remark~\ref{rem:spectralStabilitygGL}. In addition to localized-induced decay, the proof in~\cite{SZ17} exploits that the semigroup generated by the linearization exhibits faster decay when applied to mean-zero perturbations. A similar effect occurs in the heat equation $\partial_t u = \partial_x^2 u$. Indeed, if $f \in L^1(\R)$ has mean zero, then $\re^{\partial_x^2 t} f$ exhibits decay at rate $t^{-1}$ in $L^\infty(\R)$, instead of the usual decay at rate $t^{-1/2}$.

In the special case $q = 0$, when \eqref{eq:solsmodGL} degenerates to a homogeneous solution of the modified Ginzburg-Landau system \eqref{eq:modGL}, the nonlinear stability of \eqref{eq:solsmodGL} against localized perturbations has been shown in \cite{Hilder21} without the restriction to mean-zero perturbations.
The proof exploits the special structure of the linearity for $q = 0$, see also Lemma \ref{lem:difest} and Theorem \ref{thm:global}.

In this paper we consider the behavior of the periodic solutions~\eqref{eq:solsmodGL} in the modified Ginzburg-Landau system~\eqref{eq:modGL} under $C_{\mathrm{ub}}^m$-perturbations. The slower decay rates in combination with multiple interacting critical modes makes it challenging to close a nonlinear iteration and provides an excellent test for the robustness of our newly developed $L^\infty$-scheme. The outcome of our analysis is that we can control small $C_{\mathrm{ub}}^m$-perturbations on exponentially long time scales. The reason that we do not acquire global control is that the linear decay on the $B$-component is just too slow to control all nonlinearities arising in the perturbation equations. Indeed, if the initial perturbation in the $B$-component is slightly localized, i.e.~$L^p$-localized for some $p \in [1,\infty)$, the linear decay in the $B$-component improves, so that we are able to close a global nonlinear argument. Moreover, a global result can be obtained in the special case $q = 0$. Then, the semigroup acting on the critical nonlinearity exhibits better decay rates. We refer to~\S\ref{sec:mainresults} for a full overview of the aforementioned results and state the relevant nonlinear stability result only.

\begin{theorem} \label{thm:stabmodGL}
Let $q, \gamma \in \R$ and $D > 0$ satisfy~\eqref{e:spec}. Then, there exist $M_0, \varepsilon_0 > 0$ such that for all $\varepsilon \in (0,\varepsilon_0)$ and 
$$(v_0,B_0) \in C^2_{\mathrm{ub}}(\R,\C) \times \left(C_{\mathrm{ub}}^1(\R) \cap L^1(\R)\right),$$ 
with $\norm[W^{2,\infty}]{v_0} + \norm[W^{1,\infty}]{B_0} + \norm[L^1]{B_0} < \varepsilon$ there exists a global mild solution 
$$\left(A,B\right) \in C\big([0,\infty), C^2_{\mathrm{ub}}(\R,\C)\big) \times C\big([0,\infty),C_{\mathrm{ub}}^1(\R)\big),$$ 
of the modified Ginzburg-Landau system~\eqref{eq:modGL} with initial condition
$$\left(A(x,0),B(x,0)\right) = \left(\sqrt{1-q^2} \,\re^{\ri qx} + v_0(x),B_0(x)\right),$$
satisfying
\begin{align*}
\sup_{x \in \R} \left[\left|A(x,t) - \sqrt{1-q^2} \,\re^{\ri q x}\right| + \sqrt{1+t}\,|B(x,t)|\right] \leq M_0 \varepsilon,
\end{align*}
for all $t \geq 0$. Moreover, if it holds in addition $(v_0,B_0) \in C^3_{\mathrm{ub}}(\R,\C) \times C_{\mathrm{ub}}^2(\R)$, then the solution $(A(t),B(t))$ is a classical solution of~\eqref{eq:modGL}.
\end{theorem}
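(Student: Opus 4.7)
The plan is to perform a polar decomposition analogous to~\eqref{eq:polar}, analyze the resulting linearization via Bloch-wave methods, and close a nonlinear iteration in weighted $L^\infty$-norms along the lines of the model scheme sketched in~\S\ref{sec:ill}. Setting $A(x,t)=\sqrt{1-q^2}\,\re^{\ri qx+r(x,t)+\ri\phi(x,t)}$ with real $r,\phi$ converts~\eqref{eq:modGL} into the real semilinear system
\begin{align*}
\partial_t r &= \partial_x^2 r - 2(1-q^2)r - 2q\partial_x\phi + B + \curlN_r(r,\partial_x r,\partial_x\phi),\\
\partial_t \phi &= \partial_x^2 \phi + 2q\partial_x r + 2\partial_x r\,\partial_x\phi,\\
\partial_t B &= D\partial_x^2 B + \gamma\partial_x^2\bigl[(1-q^2)(\re^{2r}-1)\bigr],
\end{align*}
in which $\curlN_r$ is at least quadratic. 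Two structural features are crucial: every nonlinear monomial carries either a spatial derivative or a factor of the decaying variables $r,B$, and the whole $B$-nonlinearity is in conservation form $\partial_x^2(\cdot)$, reflecting the underlying conservation law. The initial data translate to $\|(r_0,\phi_0)\|_{W^{2,\infty}}+\|B_0\|_{W^{1,\infty}}+\|B_0\|_{L^1}\lesssim\varepsilon$.

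I would then diagonalize the Fourier symbol $\hat L(k)\in\R^{3\times 3}$ of the linear part. At $k=0$ it has a simple stable eigenvalue $-2(1-q^2)$ on the amplitude direction $(1,0,0)$ and a double eigenvalue $0$ spanned by the translational vector $(0,1,0)$ and the conservation-law vector $(1,0,2(1-q^2))$; assumption~\eqref{e:spec} produces two critical branches $\lambda_\pm(k)=-\mu_\pm k^2+\mathcal{O}(k^4)$ with $\mu_\pm>0$, so that $L$ is spectrally stable and the amplitude is slaved to the conservation-law mode via $r\approx B/(2(1-q^2))$ modulo exponentially decaying corrections. Splitting $\re^{Lt}$ mode-by-mode and applying standard heat-kernel bounds, I would then derive
\begin{equation*}
\|\partial_x^\ell\re^{Lt}f\|_{L^\infty}\lesssim(1+t)^{-\ell/2}\|f\|_{L^\infty},\quad\ell=0,1,2,\qquad \|\re^{Lt}(0,0,h)^\top\|_{L^\infty}\lesssim(1+t)^{-1/2}\|h\|_{L^1},
\end{equation*}
together with enhanced $(1+t)^{-1/2}$-decay of the $r$- and $B$-components via the slaving relation and the $L^1$-assumption on the initial $B$-datum.

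A contraction argument is then set up in a Banach space of the schematic form
\begin{equation*}
\|(r,\phi,B)\|_T:=\sup_{0\leq t\leq T}\!\Big[\|\phi\|_{L^\infty}+(1+t)^{1/2}\bigl(\|r\|_{W^{1,\infty}}+\|\partial_x\phi\|_{W^{1,\infty}}+\|B\|_{L^\infty}\bigr)+(1+t)\|\partial_x B\|_{L^\infty}\Big].
\end{equation*}
Inside such a ball every quadratic integrand is of size $(1+s)^{-1}$ and either sits behind a spatial derivative — yielding the smoothing factor $(1+t-s)^{-1/2}$ — or lands in the strongly damped amplitude or conservation-law slot, which supplies the same factor via the enhanced semigroup bound. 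The convolution estimate $\int_0^t(1+t-s)^{-1/2}(1+s)^{-1}\de s\lesssim(1+t)^{-1/2}$ then reproduces the postulated rates; the pointwise bound $|A-\sqrt{1-q^2}\re^{\ri qx}|\leq\sqrt{1-q^2}|\re^{r+\ri\phi}-1|\lesssim|r|+|\phi|$ delivers the announced $M_0\varepsilon$-control on the original variables; and a parabolic bootstrap upgrades the mild solution to a classical one under the additional $C^3_{\mathrm{ub}}\times C^2_{\mathrm{ub}}$-regularity.

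The principal obstacle, in contrast with the scalar setting of Theorem~\ref{thm:stabGL_intro}, is the two-dimensional critical spectral subspace intertwining phase and conservation-law modes: the relevant block of $\hat L(k)$ must be diagonalized carefully near $k=0$ to extract the sharp $(1+t)^{-1/2}$ rates and to verify that the linear couplings $-2q\partial_x\phi$ and $2\gamma(1-q^2)\partial_x^2 r$ do not destroy the slaving of $r$. A subtler difficulty is that $\phi$ itself only barely remains $L^\infty$-controlled: the Duhamel integral of the nonlinearity $2\partial_x r\,\partial_x\phi$ formally generates a logarithmic loss, which is absorbed by the rewriting $2\partial_x r\,\partial_x\phi=2\partial_x(r\,\partial_x\phi)-2r\,\partial_x^2\phi$ exposing a hidden derivative and supplying the extra decay factor — exactly the manoeuvre that the pure $L^\infty$-scheme of~\S\ref{sec:ill} is designed to accommodate.
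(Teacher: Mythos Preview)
Your overall strategy --- polar decomposition, Fourier diagonalisation of the linearisation, and a weighted $L^\infty$-iteration --- matches the paper's. But the template you propose is too weak to close, and the fix you suggest for the $\phi$-equation does not work.

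Concretely, your norm only demands $\partial_x r,\partial_x^2\phi\sim(1+t)^{-1/2}$, so every quadratic integrand is at best of order $(1+s)^{-1}$. The convolution you invoke, $\int_0^t(1+t-s)^{-1/2}(1+s)^{-1}\,\de s$, is \emph{not} $\lesssim(1+t)^{-1/2}$: it carries a $\ln(2+t)$ factor, which is precisely the obstruction that in the fully nonlocalised setting forces the paper down to exponentially long times (Theorem~\ref{thm:explong}). Your rewriting $2\partial_x r\,\partial_x\phi=2\partial_x(r\,\partial_x\phi)-2r\,\partial_x^2\phi$ does not cure this: with your rates the term $r\,\partial_x^2\phi$ is again $(1+s)^{-1}$ and carries no derivative, so the logarithm reappears in the $\phi$-bound.

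The paper's resolution is to strengthen the template, not to manipulate the nonlinearity. The $L^1$-assumption on $B_0$ feeds through the linear semigroup to give $(1+t)^{-1/2}$ on \emph{all} of $V=(r,\partial_x\phi,B)$ and hence $(1+t)^{-1}$ on $\partial_x V=(\partial_x r,\partial_x^2\phi,\partial_x B)$; see the template in the proof of Theorem~\ref{thm:partloc} with $p=1$. With these rates the quadratic terms are $(1+s)^{-3/2}$ and every convolution closes without logarithms. The paper also takes the local wavenumber $\psi=\partial_x\phi$ as a primary unknown in $V$, so that the $\psi$-nonlinearity is automatically in divergence form $\partial_x(2\psi\,\partial_x r)$ and no product-rule rewriting is needed; the undifferentiated $\phi$ is recovered afterwards from its own Duhamel formula, where $2\psi\,\partial_x r\sim(1+s)^{-3/2}$ is already integrable.
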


The proof of Theorem~\ref{thm:stabmodGL} is a direct consequence of Theorem~\ref{thm:partloc} stated below.

We note that Theorem~\ref{thm:stabmodGL} improves the nonlinear stability result in~\cite{SZ17} by lifting the mean-zero condition in the $B$-component and removing the localization requirement in the $A$-component. Moreover, one readily observes that Theorem~\ref{thm:stabmodGL} implies Theorem~\ref{thm:stabGL_intro} by setting $\gamma = 0$ and $B_0 = 0$. Thus, from the discussion below Theorem~\ref{thm:stabGL_intro} we conclude that the presented decay rate in Theorem~\ref{thm:stabmodGL} on the $A$-component is sharp. On the other hand, for $\gamma = 0$ the $B$-component of~\eqref{eq:modGL} reduces to the linear heat equation, for which the presented decay at rate $t^{-1/2}$ is sharp.

It is interesting to note that the localization-induced decay is only exploited in the \emph{linear} estimates in the proof of Theorem~\ref{thm:stabmodGL}. That is, the nonlinear iteration is still based on a pure $L^\infty$-scheme. In fact, this is also a necessity: since $v_0$ is not $L^1$-localized, the $L^1$-norm of the perturbation in the $A$-component cannot be controlled and, thus, the $L^1$-norm of the nonlinearities in the $B$-component in~\eqref{eq:modGL} cannot be estimated, which prohibits \emph{nonlinear} $L^1$-estimates on the $B$-component. We refer to~\S\ref{sec:ill} for a simple example, where localization-induced decay is used in the linear estimates only and the nonlinear argument is purely based on $L^\infty$-estimates.

\begin{remark}\label{rem:2par}
{\rm
In fact, the modified Ginzburg-Landau system~\eqref{eq:modGL} possesses a 2-parameter family of periodic solutions
\begin{align}\left(\sqrt{1+b-q^2}\, \re^{\ri qx},b\right), \label{eq:solsmodGL2}\end{align}
with $q,b \in \R$ satisfying $1+b-q^2 > 0$. By introducing the new coordinate $\smash{\widetilde{B}} = B - b$ we transform the equilibrium  $(A,B) = (0,b)$ of the $(A,B)$-system~\eqref{eq:modGL} into the equilibrium $\smash{ ( A,\widetilde{B}) = (0,0) }$ of the $\smash{ (A,\widetilde{B})}$-system. By this transformation the additional term $bA$ appears in the $A$-equation. Since it holds $1 + b > 0$, the $ \smash{(A,\widetilde{B}) }$-system can be brought back to the normal form~\eqref{eq:modGL} with transformed coefficients $D > 0$ and $\gamma \in \R$ upon rescaling $A, \smash{\widetilde{B}}$, $ t $ and $ x $. Therefore, to understand the dynamics of bounded perturbations of the periodic solutions~\eqref{eq:solsmodGL2}, it suffices to consider~\eqref{eq:solsmodGL}.
}\end{remark}

\begin{remark}{\rm 
We note that the nonlinear stability of periodic waves against localized perturbations in case of multiple critical modes has been obtained in systems of viscous conservation laws~\cite{JNRZV,JONZVD12,JONZV}. In these works, it is exploited that the conservation law structure yields an additional derivative in front of the nonlinearity, which leads to additional decay. Moreover, it is crucial that different critical modes exhibit different group velocities, so that the interaction between critical modes is harmless. Although the case of a nonlocalized initial phase off-set has been dealt with in~\cite{JONZV}, nonlinear stability of periodic waves against $C_{\mathrm{ub}}^m$-perturbations in such models is still open, see also Remark~\ref{rem:new}.}
\end{remark}

\subsection{Outline of paper}

We start by illustrating the main ideas of our newly developed $L^\infty$-scheme in~\S\ref{sec:ill}. We employ the scheme in~\S\ref{sec:modGL} to study the long-term dynamics of $C_{\mathrm{ub}}^m$-perturbations of the periodic roll solutions~\eqref{eq:solsmodGL} in the modified Ginzburg-Landau system~\eqref{eq:modGL}. The established results are stated in~\S\ref{sec:mainresults} and imply associated results for the real Ginzburg-Landau equation~\eqref{GL}, which can be found in~\S\ref{sec:GL}. Finally, Appendix~\ref{app:aux} is dedicated to auxiliary results to control convolution operators, which arise in the decomposition of the semigroup generated by the linearization about the periodic rolls waves.

\medskip

{\bf Notation.} Throughout, the notation $A \lesssim B$ means that there exists a constant $C > 0$, independent of $A$ and $B$, such that $A \leq CB$.

\medskip

\noindent {\bf Acknowledgements.}  
Funded by the Deutsche Forschungsgemeinschaft (DFG, German Research Foundation) -- Project-ID 491897824.

\section{Illustration of main ideas} \label{sec:ill}

In this section we convey the main ideas behind our $L^\infty$-scheme by considering the scalar nonlinear heat equation
\begin{align}
\partial_t u = \partial_x^2 u + \alpha_1 \left(\partial_x u\right)^{q_1} + \alpha_2 \partial_x \left(u^{q_2}\right), \label{eq:heat1} 
\end{align}
for $q_{1,2} \in \mathbb{N}_{\geq 3}$ and $\alpha_{1,2} \in \R$. The motivation for looking at equation~\eqref{eq:heat1} is that it represents the behavior of the critical modes of the perturbation equations arising in the nonlinear stability analysis of periodic rolls in the real Ginzburg-Landau equation~\eqref{GL} and the modified Ginzburg-Landau system~\eqref{eq:modGL}, see~\eqref{eq:GL} and~\eqref{eq:modGL2}, respectively. That is, these critical modes are only diffusively damped as opposed to exponentially damped. Moreover, they appear in the nonlinearity as a derivative only (translational mode) or  with a derivative in front (critical mode coming from the conservation law in case of the modified Ginzburg-Landau system).

Upon integrating we obtain the Duhamel formula
\begin{align}
u(t) &= \re^{\partial_x^2 t} u_0 + \alpha_1 \int_0^t \re^{\partial_x^2 (t-s)} \left(\partial_x u(s)\right)^{q_1} \de s +  \alpha_2 \int_0^t \re^{\partial_x^2 (t-s)} \partial_x \left(u(s)^{q_2}\right) \de s, \label{eq:du_heat1} 
\end{align}
for the mild solution of~\eqref{eq:heat1} with initial condition $u(0) = u_0$. 

\subsection{Linear estimates}

The smoothing action of the heat semigroup leads to the following well-known decay estimate
\begin{align} \norm[L^\infty]{\partial_x^j \re^{\partial_x^2 t}f} \lesssim t^{-\frac{j}{2}}\norm[L^\infty]{f}, \label{eq:linheat}\end{align}
for $j \in \mathbb{N}_0$ and $f \in L^\infty(\R)$.
Note that without any derivatives, i.e.~for $j = 0$, we obtain no decay from the semigroup. As the estimate in~\eqref{eq:linheat} does not require any form of localization, we refer to it as a pure $L^\infty$-estimate. This contrasts with the estimate
\begin{align} \norm[L^\infty]{\re^{\partial_x^2 t}f} \lesssim t^{-\frac{1}{2p}}\norm[L^p]{f}, \label{eq:linheat2}\end{align}
for $f \in L^p(\R)$ and $p \in [1,\infty)$, where one gives up localization to pick up algebraic decay from the semigroup.

\subsection{Nonlinear estimates}

Take $u_0 \in C_{\mathrm{ub}}^1(\R)$. Then, by standard local existence theory for semilinear parabolic equations, see for instance~\cite{LUN}, there exist $T \in (0,\infty]$ and a unique maximal mild solution $u \in C\big([0,T),C_{\mathrm{ub}}^1(\R)\big)$ satisfying~\eqref{eq:du_heat1}. If $T < \infty$, then it holds
\begin{align} \sup_{t \uparrow T} \norm[W^{1,\infty}]{u(t)} = \infty. \label{eq:blowup}\end{align}

First, we consider the case $\alpha_2 = 0$, so that the toy problem~\eqref{eq:heat1} represents the behavior of the translational mode in the perturbation equations~\eqref{eq:modGL2} and~\eqref{eq:GL}. Taking $t \in (0,T)$ and assuming that the mild solution of~\eqref{eq:heat1} satisfies
\begin{align*} \norm[L^\infty]{\partial_x^j u(s)} \lesssim \frac{1}{(1+s)^{\frac{j}{2}}},\end{align*}
for $s \in [0,t)$, we apply~\eqref{eq:linheat} to estimate the nonlinear term in~\eqref{eq:du_heat1} by
\begin{align} \label{eq:nlheat}
\norm[L^\infty]{\alpha_1 \int_0^t \partial_x^j \re^{\partial_x^2 (t-s)} \left(\partial_x u(s)\right)^{q_1} \de s} \lesssim \int_0^t \frac{1}{(t-s)^{\frac{j}{2}}(1+s)^{\frac{3}{2}}} \de s \lesssim \frac{1}{(1+t)^{\frac{j}{2}}},
\end{align}
for $j = 0,1$, where we use that $q_1 \geq 3$. This indicates that a nonlinear iteration with the template function 
\begin{align*}\eta(t) = \sup_{0 \leq s \leq t} \left[\norm[L^\infty]{u(s)} + \sqrt{1+s} \, \norm[L^\infty]{\partial_x u(s)}\right],\end{align*}
can be closed. Indeed, applying~\eqref{eq:linheat} and~\eqref{eq:nlheat} to~\eqref{eq:du_heat1} one finds a constant $C> 0$ such that
\begin{align} \eta(t) \leq C\left(\|u_0\|_{W^{1,\infty}} + \eta(t)^2\right),\label{eq:heat_key}\end{align}
from which it can be concluded by continuity that, if $u_0 \in C_{\mathrm{ub}}^1(\R)$ satisfies $\|u_0\|_{W^{1,\infty}} < (4C^2)^{-1}$, then~\eqref{eq:blowup} cannot hold and we have $\eta(t) \leq 2C\|u_0\|_{W^{1,\infty}}$ and $T = \infty$. So, we have established global existence and decay of solutions of~\eqref{eq:heat1} with small initial conditions in $C_{\mathrm{ub}}^1(\R)$. We stress that the above nonlinear argument only employs $L^\infty$-estimates. Indeed, both the linear and nonlinear term in~\eqref{eq:du_heat1} are estimated with the aid of~\eqref{eq:linheat}. 

Next, we consider the case $\alpha_1 = 0$, so that the toy problem~\eqref{eq:heat1} represents the behavior of the critical mode originating from the conservation law in the perturbation equation~\eqref{eq:modGL2}. One readily observes that a similar argument as above does not yield enough decay to estimate the nonlinear term in~\eqref{eq:du_heat1}. Indeed, one obtains a bound of the form
$$\int_0^t \frac{1}{(t-s)^{\frac{j}{2}}\sqrt{1+s}} \de s,$$
which cannot be bounded by $(1+t)^{-\frac{j}{2}}$ for $j = 0,1$. Ideally one would take stronger weights in the template function. However, the weights in $\eta(t)$ already reflect the strongest linear decay that can be expected for $C_{\mathrm{ub}}^m$-perturbations, cf.~\eqref{eq:linheat}. Thus, we assume in addition that it holds $u_0 \in L^p(\R)$ for some $p \in [1,2]$ and work with the template function
\begin{align*}\eta(t) = \sup_{0 \leq s \leq t} \left[(1+s)^{\frac{1}{2p}}\norm[L^\infty]{u(s)} + (1+s)^{\frac{1}{2} + \frac{1}{2p}}\norm[L^\infty]{\partial_x u(s)}\right].\end{align*}
Taking $t \in (0,T)$ and assuming that the mild solution to~\eqref{eq:heat1} satisfies
\begin{align*} \norm[L^\infty]{\partial_x^j u(s)} \lesssim \frac{1}{(1+s)^{\frac{j}{2} + \frac{1}{2p}}},\end{align*}
for $s \in [0,t)$ and $j = 0,1$, we apply~\eqref{eq:linheat} to estimate the nonlinear term in~\eqref{eq:du_heat1} by
\begin{align} \label{eq:nlheat2}
\begin{split}
\norm[L^\infty]{\alpha_2 \int_0^t \re^{\partial_x^2 (t-s)} \partial_x\left(u(s)^{q_2}\right) \de s} &\lesssim \int_0^t \frac{1}{\sqrt{t-s}(1+s)^{\frac{3}{2p}}} \de s \lesssim \frac{1}{(1+t)^{\frac{1}{2p}}},\\
\norm[L^\infty]{\alpha_2 \int_0^t \partial_x \re^{\partial_x (t-s)} \partial_x\left(u(s)^{q_2}\right) \de s} &\lesssim \int_0^{\frac{\xi(t) t}{2}} \frac{1}{(t-s) (1+s)^{\frac{3}{2p}}} \de s + \int_{\frac{\xi(t) t}{2}}^t \frac{1}{\sqrt{t-s} (1+s)^{\frac{1}{2} + \frac{3}{2p}}} \de s\\ &\lesssim \frac{1}{(1+t)^{\frac{1}{2} + \frac{1}{2p}}},
\end{split}
\end{align}
where $\xi$ is the characteristic function satisfying $\xi(t) = 0$ for $t \in [0,1]$ and $\xi(t) = 1$ for $t > 1$ and we use $q_2 \geq 3$. Hence, applying~\eqref{eq:linheat2} and~\eqref{eq:nlheat2} to~\eqref{eq:du_heat1} we obtain the key inequality~\eqref{eq:heat_key}. So, we establish global existence and decay of solutions of~\eqref{eq:heat1} with small initial data in $C_{\mathrm{ub}}^1(\R) \cap L^p(\R)$ for some $p \in [1,2]$. We emphasize that localization-induced decay, i.e.~estimate~\eqref{eq:linheat2}, is only used to bound the linear term in~\eqref{eq:du_heat1}, whereas the nonlinear estimate~\eqref{eq:nlheat2} relies on the $L^\infty$-bound~\eqref{eq:linheat} only. Hence, the presented scheme is still a pure $L^\infty$-scheme on the nonlinear level, which is also the case in the upcoming proofs of Theorems~\ref{thm:stabmodGL} and~\ref{thm:partloc}. 

\begin{remark}{\rm
We note that the comparison principle can also be used to conclude that solutions with bounded initial data in the scalar equation~\eqref{eq:heat1} stay bounded. However, the $L^\infty$-scheme illustrated above will be applied to systems of multiple components, i.e.~the perturbation equations~\eqref{eq:modGL2} and~\eqref{eq:GL}, for which a comparison principle is not available. 
}\end{remark}

\section{Analysis in the modified Ginzburg-Landau system} \label{sec:modGL}

We study the dynamics of perturbations of the periodic roll solutions~\eqref{eq:solsmodGL} of the modified Ginzburg-Landau system~\eqref{eq:modGL}. We exploit the gauge invariance of~\eqref{eq:modGL} and write the perturbed solution in polar form
\begin{align}
	(A,B)(t,x) = \left(\sqrt{1-q^2} \,\re^{\ri q x + r(x,t)+ \ri \phi(x,t)},B(t,x)\right). \label{eq:pertmodGL}
\end{align}
Inserting the ansatz~\eqref{eq:pertmodGL} into~\eqref{eq:modGL}, we find that the perturbation $(r,\phi,B)$ satisfies the real autonomous system
\begin{align}
	\label{eq:modGL2}
\begin{split}
	\partial_t r &= \partial_x^2 r + B - 2q \partial_x \phi - \left(1-q^2\right) \left(\re^{2r} - 1\right) + (\partial_x r)^2 - (\partial_x \phi)^2, \\
	\partial_t \phi &= \partial_x^2 \phi + 2q \partial_x r + 2(\partial_x r)(\partial_x\phi), \\
	\partial_t B &= D \partial_x^2 B + \gamma \left(1-q^2\right) \partial_x^2 \left(\re^{2r}\right).
\end{split}
\end{align}
Thus, in order to understand the dynamics of bounded perturbations of the periodic solution~\eqref{eq:solsmodGL}, we study the dynamics of solutions with small, bounded initial data in~\eqref{eq:modGL2}.

We proceed as follows. First, we state the main results in~\S\ref{sec:mainresults}. Then, we collect the necessary local existence theory in~\S\ref{sec:local}. Next, we formulate the perturbation equations with respect to the local wavenumber in~\S\ref{sec:localw}. Subsequently, we study the spectrum of the linearization in~\S\ref{sec:spec} and derive estimates on the associated semigroup in~\ref{sec:semdecomp}. Finally, the proof of the main results can be found in~\S\ref{sec:explong},~\S\ref{sec:partloc} and~\S\ref{sec:q0}. 

\subsection{Main results} \label{sec:mainresults}

Our first result concerns $C_{\mathrm{ub}}^m$-perturbations, i.e.~we impose no localization conditions. We establish that the corresponding solutions to~\eqref{eq:modGL2} exist and stay small in $L^\infty$-norm for exponentially long times. Moreover, we find that the derivative of the perturbation decays diffusively with rate $t^{-1/2}$ for exponentially long times.

\begin{theorem}\label{thm:explong}
Let $q,\gamma \in \R$ and $D > 0$ satisfy~\eqref{e:spec}. Then, there exist $M_0, \epsilon_0 > 0$ such that for all $\epsilon \in (0,\epsilon_0)$, $r_0,\phi_0 \in C_{\mathrm{ub}}^{2}(\R)$ and $B_0 \in C_{\mathrm{ub}}^1(\R)$ satisfying
\begin{align*}
\norm[W^{2,\infty}]{r_0} + \norm[W^{2,\infty}]{\phi_0} + \norm[W^{1,\infty}]{B_0} < \epsilon,
\end{align*}
there exists a mild solution
$$(r,\phi,B) \in C\big(\big[0,\re^{\epsilon_0/\epsilon}-2\big],C_{\mathrm{ub}}^2(\R) \times C_{\mathrm{ub}}^2(\R) \times C_{\mathrm{ub}}^1(\R)\big),$$
to~\eqref{eq:modGL2} with initial condition $(r(0),\phi(0),B(0)) = (r_0,\phi_0,B_0)$, which enjoys the estimates
\begin{align*}
\norm[L^\infty]{r(t)} + \norm[L^\infty]{\partial_x \phi(t)} + \norm[L^\infty]{B(t)} &\leq M_0\epsilon, \\
\norm[W^{1,\infty}]{\partial_x r(t)} + \norm[L^\infty]{\partial_x^2 \phi(t)} + \norm[L^\infty]{\partial_x B(t)} &\leq \frac{M_0\epsilon}{\sqrt{1+t}},
\end{align*}
for all $t \in \big[0,\re^{\epsilon_0/\epsilon}-2\big]$. Additionally, the phase $\phi$ can be bounded as
\begin{align*}
\norm[L^\infty]{\phi(t)} \leq M_0\epsilon\sqrt{1+t},
\end{align*}
for all $t \in \big[0,\re^{\epsilon_0/\epsilon}-2\big]$.
\end{theorem}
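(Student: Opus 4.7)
The plan is to adapt the pure $L^\infty$-scheme of~\S\ref{sec:ill} to the three-component perturbation system~\eqref{eq:modGL2} and close a nonlinear iteration on the window $\big[0,\re^{\epsilon_0/\epsilon}-2\big]$. Local existence and uniqueness of a maximal mild solution $(r,\phi,B)$ in the stated regularity on some interval $[0,T)$, together with the usual blow-up alternative, follows from the semilinear parabolic theory to be collected in~\S\ref{sec:local}; the real task is to extract a priori $L^\infty$-bounds that prevent blow-up and realize the claimed decay rates. Using the spectral analysis of~\S\ref{sec:spec} and the semigroup decomposition of~\S\ref{sec:semdecomp}, I would split the semigroup $\re^{\mathcal{L}t}$ generated by the linearization of~\eqref{eq:modGL2} at the trivial equilibrium as $S_c(t)+S_s(t)$, with $S_s(t)$ exponentially decaying on $L^\infty$ and $S_c(t)$ carrying the two critical eigenvalue curves that touch zero at Fourier wavenumber $k=0$ (the translational mode, essentially in $\phi$, and the conservation-law mode, essentially in $B$, with $r$ slaved to $B$ via the diagonal damping $-2(1-q^2)r$). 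A crucial consequence of this slaving is that the non-decaying sources appearing in the $r$-equation only, notably $(1-q^2)(\re^{2r}-1-2r)$ and $(\partial_x\phi)^2$, are projected entirely into the stable subspace and cause no trouble. The spectral condition~\eqref{e:spec} ensures that both critical curves are strictly stable for $k\neq 0$, so that $S_c(t)$ enjoys heat-type $L^\infty\to L^\infty$ smoothing bounds $\|\partial_x^{j}S_c(t)f\|_{L^\infty}\lesssim(1+t)^{-j/2}\|f\|_{L^\infty}$ for $j=0,1,2$, in parallel with~\eqref{eq:linheat}.

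With these linear bounds in hand, I would introduce the template
\begin{align*}
\eta(t):=\sup_{0\le s\le t}\Big[\|r(s)\|_{L^\infty}+\|\partial_x\phi(s)\|_{L^\infty}+\|B(s)\|_{L^\infty}+\sqrt{1+s}\,\big(\|\partial_x r(s)\|_{W^{1,\infty}}+\|\partial_x^2\phi(s)\|_{L^\infty}+\|\partial_x B(s)\|_{L^\infty}\big)\Big]
\end{align*}
and run a continuity bootstrap under the assumption $\eta(t)\le M\epsilon$ on a maximal interval $[0,T^\ast)\subset[0,T)$. Every nonlinearity surviving the projection onto the critical subspace carries at least one decaying derivative and is pointwise of size $\eta(t)^2/\sqrt{1+s}$ or better: notably $(\partial_x r)(\partial_x\phi)$ in the $\phi$-equation, and the $B$-nonlinearity which, after one integration by parts, reads $2\gamma(1-q^2)\partial_x[(\re^{2r}-1)\partial_x r]$, inheriting a free derivative. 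Their contributions to the un-weighted template quantities $\|r\|_{L^\infty}$, $\|\partial_x\phi\|_{L^\infty}$ and $\|B\|_{L^\infty}$ are controlled exactly as in~\eqref{eq:nlheat}.

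The hard part is controlling the derivative-weighted template quantities $\sqrt{1+t}\,\|\partial_x^2\phi(t)\|_{L^\infty}$ and $\sqrt{1+t}\,\|\partial_x B(t)\|_{L^\infty}$. Writing out their Duhamel integrals one must distribute two derivatives between the semigroup and the source, and on the middle region $s\in[t/2,t-1]$ the only viable choice invokes the full second-derivative bound $\|\partial_x^2 S_c(t-s)\|_{L^\infty\to L^\infty}\lesssim(t-s)^{-1}$ against sources of size $\eta(t)^2/\sqrt{1+s}$. The resulting contribution,
\begin{align*}
\eta(t)^2\int_{t/2}^{t-1}\frac{\de s}{(t-s)\sqrt{1+s}}\ \lesssim\ \frac{\eta(t)^2\log(2+t)}{\sqrt{1+t}},
\end{align*}
yields, after multiplication by $\sqrt{1+t}$, a logarithmic loss of size $\eta(t)^2\log(2+t)$ in the template. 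Collecting all estimates I would arrive at a nonlinear inequality of the form
\begin{align*}
\eta(t)\ \le\ C_1\epsilon\ +\ C_2\,\eta(t)^2\log(2+t)
\end{align*}
on $[0,T^\ast)$.

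Setting $M_0:=4C_1$ and choosing $\epsilon_0>0$ so small that $C_2M_0\epsilon_0\,\log\big(\re^{\epsilon_0/\epsilon}\big)=C_2M_0\epsilon_0\cdot(\epsilon_0/\epsilon)\le 1/2$ for every $\epsilon\in(0,\epsilon_0)$, the nonlinear term can be absorbed into the left-hand side as long as $1+t\le\re^{\epsilon_0/\epsilon}-1$. A standard continuity argument combined with the blow-up alternative then forces $\eta(t)\le M_0\epsilon$ on the entire interval $\big[0,\re^{\epsilon_0/\epsilon}-2\big]$, giving the first two displayed estimates of the theorem. The pointwise bound $\|\phi(t)\|_{L^\infty}\le M_0\epsilon\sqrt{1+t}$ finally follows by estimating the $\phi$-Duhamel formula without taking any spatial derivative: applying the non-smoothing $L^\infty\to L^\infty$ bound on $S_c(t)$ to sources of size $M_0\epsilon/\sqrt{1+s}$ yields $\|\phi(t)\|_{L^\infty}\lesssim\|\phi_0\|_{L^\infty}+M_0\epsilon\sqrt{1+t}$. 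The principal obstacle is the logarithmic barrier above; it is precisely this loss that an $L^p$-localization of $B_0$ would eliminate via~\eqref{eq:linheat2}, leading to the global statement of the companion Theorem~\ref{thm:partloc}.
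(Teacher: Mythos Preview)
Your strategy matches the paper's: work with $V=(r,\partial_x\phi,B)$, split $\re^{tL}=S_c(t)+S_e(t)$, and close a template with a logarithmic loss yielding the key inequality $\eta(t)\le C(\epsilon+\eta(t)^2\ln(2+t))$. Two points need tightening. First, the non-decaying $r$-sources $(1-q^2)(\re^{2r}-1-2r)$ and $(\partial_x\phi)^2$ are \emph{not} projected entirely into the stable subspace; what actually happens is that the critical projection of $(1,0,0)^\top$ vanishes only at $k=0$, which via the refined bound~\eqref{eq:refined1} trades the $(*,0,0)^\top$ structure for one spatial derivative, so $S_c(t-s)N_1$ is controlled by $(1+t-s)^{-1/2}\|\partial_x N_1\|_{L^\infty}\lesssim(1+t-s)^{-1/2}\eta^2/\sqrt{1+s}$; these terms still feed the logarithm when you estimate $\partial_x V$. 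Second, your closing condition $C_2M_0\epsilon_0\cdot(\epsilon_0/\epsilon)\le\tfrac12$ cannot hold as $\epsilon\downarrow0$; the correct absorption reads $C_2(M_0\epsilon)^2\cdot(\epsilon_0/\epsilon)=C_2M_0^2\epsilon_0\,\epsilon$, so the $\epsilon$-independent requirement is simply $C_2M_0^2\epsilon_0<M_0-C_1$. Finally, note that the paper obtains the $\sqrt{1+t}\,\|\partial_x^2 r\|_{L^\infty}$ bound not through the three-component Duhamel (where the exponential part $S_e$ is only estimated for $n+m\le1$) but via a separate scalar Duhamel for $r$ with generator $L_0=\partial_x^2-2(1-q^2)$, yielding $\eta_2\lesssim\epsilon+\eta_1$ linearly; you will need the same device.
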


The proof of Theorem~\ref{thm:explong} can be found in~\S\ref{sec:explong}.

Our next result concerns bounded, partially localized perturbations. More precisely, in addition to requiring that the initial perturbation $(r_0,\phi_0,B_0)$ and sufficiently many of its derivatives are uniformly continuous and small in $L^\infty$-norm, we demand that $B_0$ is also small in $L^p$-norm for some $p \in [1,\infty)$.

\begin{theorem}\label{thm:partloc}
	Let $p \in [1,\infty)$ and let $q, \gamma \in \R$ and $D > 0$ satisfy~\eqref{e:spec}.
	Then, there exist $M_0, \varepsilon_0 > 0$ such that for all $\varepsilon \in (0,\varepsilon_0)$, $r_0, \phi_0 \in C^2_{\mathrm{ub}}(\R)$ and $B_0 \in L^p(\R) \cap C^1_{\mathrm{ub}}(\R)$ satisfying
	\begin{align*}
		\norm[W^{2,\infty}]{r_0} + \norm[W^{2,\infty}]{\phi_0} + \norm[L^p \cap W^{1,\infty}]{B_0} < \varepsilon,
	\end{align*}
	there exists a global mild solution
	\begin{align*}
		(r,\phi,B) \in C\big([0,\infty), C^2_{\mathrm{ub}}(\R) \times C^2_{\mathrm{ub}}(\R) \times C^1_{\mathrm{ub}}(\R)\big),
	\end{align*}
	to~\eqref{eq:modGL2} with initial condition $(r(0), \phi(0),B(0)) = (r_0,\phi_0,B_0)$ enjoying the estimates
	\begin{align*}
		\norm[L^\infty]{r(t)} + \norm[L^\infty]{\partial_x \phi(t)} + \norm[L^\infty]{B(t)} &\leq M_0 \varepsilon (1+t)^{-\frac{1}{2p}}, \\
		\norm[W^{1,\infty}]{\partial_x r(t)} + \norm[L^\infty]{\partial_x^2 \phi(t)} + \norm[L^\infty]{\partial_x B(t)} &\leq M_0 \varepsilon (1+t)^{-\frac{1}{2}-\frac{1}{2p}},
	\end{align*}
	for all $t \geq 0$.
	Additionally, the phase $\phi$ can be bounded as
	\begin{align*}
		\norm[L^\infty]{\phi(t)} \leq M_0 \varepsilon (1+t)^{\frac{1}{2}-\frac{1}{2p}},
	\end{align*}
	for all $t \geq 0$.
\end{theorem}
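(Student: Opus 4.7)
My plan is to follow the pure $L^\infty$-scheme illustrated in Section~\ref{sec:ill}, working with the local-wavenumber formulation of~\eqref{eq:modGL2} and using the $L^p$-localization of $B_0$ only in the linear estimates, never in the nonlinear iteration. The starting point is the local existence theory of Section~\ref{sec:local}, which for initial data in $C^2_{\mathrm{ub}} \times C^2_{\mathrm{ub}} \times C^1_{\mathrm{ub}}$ produces a unique maximal mild solution of~\eqref{eq:modGL2} with the standard blow-up alternative in $W^{2,\infty} \times W^{2,\infty} \times W^{1,\infty}$. Writing $\psi := \partial_x \phi$, I would use that by Section~\ref{sec:localw} the triple $(r,\psi,B)$ satisfies an autonomous system whose linearization at $0$ has, by Section~\ref{sec:spec}, spectrum bounded away from the imaginary axis except on critical curves touching the origin at wavenumber $k=0$, and whose semigroup decomposes, by Section~\ref{sec:semdecomp}, as $S(t)=S_{\mathrm{s}}(t)+S_{\mathrm{c}}(t)$ with $S_{\mathrm{s}}(t)$ analytic and exponentially decaying on $C^m_{\mathrm{ub}}$ and $S_{\mathrm{c}}(t)$ a direct sum of heat-like semigroups on decoupled critical modes, after the diagonalizing change of variables that absorbs the linear couplings $+B$ and $-2q\partial_x\phi$ in the $r$-equation and $2\gamma(1-q^2)\partial_x^2 r$ in the $B$-equation. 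With this in hand I would work with the template function
\begin{align*}
\eta(T) = \sup_{0 \leq t \leq T}\Bigl[&(1+t)^{\frac{1}{2p}}(\norm[L^\infty]{r(t)}+\norm[L^\infty]{\psi(t)}+\norm[L^\infty]{B(t)}) \\
&+ (1+t)^{\frac{1}{2}+\frac{1}{2p}}(\norm[W^{1,\infty}]{\partial_x r(t)}+\norm[L^\infty]{\partial_x\psi(t)}+\norm[L^\infty]{\partial_x B(t)})\Bigr],
\end{align*}
and target the key inequality $\eta(T)\leq C\varepsilon+C\eta(T)^2$ that closes by continuity.

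For the linear contribution to the Duhamel formula I apply the $L^\infty$-smoothing~\eqref{eq:linheat} to the $(r_0,\psi_0)$-components and the localization-induced estimate $\norm[L^\infty]{\partial_x^j\re^{D\partial_x^2 t}f} \lesssim t^{-j/2-1/(2p)}\norm[L^p]{f}$ to $B_0 \in L^p(\R)$; this is the only place the $L^p$-hypothesis is used. For the nonlinear contribution, after diagonalization the remaining quadratic terms $(\partial_x r)^2$, $\psi^2$, $(\partial_x r)\psi$, the analytic remainders $r^k$ with $k\geq 2$, and the $\partial_x^2(r^2)$-type term in the $B$-equation each carry either a derivative factor or an outside derivative that I absorb into the semigroup. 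Assuming $\eta(t)\leq K$, each such nonlinearity decays in $L^\infty$ at least like $(1+s)^{-1/2-1/p}$, and combined with the $L^\infty$-smoothing of the semigroup and the split-interval trick at $s=t/2$ from~\eqref{eq:nlheat2}, these estimates produce bounds of order $\eta(T)^2$ on every entry of $\eta$, closing the iteration. The phase $\phi$ itself is then recovered from $\phi(t)=\phi_0+\int_0^t[\partial_x^2\phi+2q\partial_x r+2(\partial_x r)\psi]\,\de s$, and integration of the derivative decay rate $(1+s)^{-1/2-1/(2p)}$ produces the claimed growth $(1+t)^{1/2-1/(2p)}$.

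The principal obstacle is the conservation-law mode in the $B$-equation: its nonlinearity $\gamma(1-q^2)\partial_x^2(\re^{2r})$ contains the linear coupling $2\gamma(1-q^2)\partial_x^2 r$, which in a naive Duhamel bound produces the divergent convolution $\int_0^t(t-s)^{-1}(1+s)^{-1/(2p)}\,\de s$. This forces the linear coupling to be absorbed into the linearization via the spectral diagonalization of Section~\ref{sec:semdecomp}, leaving a genuinely quadratic remainder. Even then, the leading term $\partial_x^2(r^2)$ is delicate: one must either split $\partial_x^2$ between the semigroup and $r^2$ or integrate by parts to rewrite $\partial_x^2(r^2)=\partial_x(2r\,\partial_x r)$ and pull one derivative onto the semigroup, in either case splitting the time convolution at $s=t/2$ to handle the borderline rates. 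The remainder of the argument is then careful bookkeeping of convolution integrals of the form $\int_0^t(t-s)^{-\alpha}(1+s)^{-\beta}\,\de s$, verifying that every entry of the template is recovered with the correct weight and that every nonlinear contribution really is quadratic in $\eta(T)$.
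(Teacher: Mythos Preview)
Your strategy matches the paper's, but two points in the execution need correction.

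First, the claim that every quadratic nonlinearity decays in $L^\infty$ at least like $(1+s)^{-1/2-1/p}$ is false for $\psi^2$ and the analytic remainder $\re^{2r}-2r-1$: these carry no derivative and decay only like $(1+s)^{-1/p}$ under your template. What you are really using is that these terms sit exclusively in the first component, i.e., $N_1(V)=(*,0,0)^\top$, and by Lemma~\ref{lem:difest} the critical semigroup gains an extra factor $(1+t-s)^{-1}$ on such vectors because the spectral projection $I-\mathcal{P}(0)$ annihilates $(1,0,0)^\top$. The gain is therefore in the semigroup, not in the nonlinearity, and the relevant convolution is $\int_0^t (1+t-s)^{-1}(1+s)^{-1/p}\,\de s \lesssim (1+t)^{-1/(2p)}$. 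Saying the terms ``carry a derivative after diagonalization'' is morally this, but trying to realize it as an actual derivative on the nonlinearity would force you to estimate $\partial_x^2(\psi^2)$, and $\partial_x^2\psi$ is not in your template.

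Second, your template includes $\|\partial_x^2 r\|_{L^\infty}$ (inside $\|\partial_x r\|_{W^{1,\infty}}$), which is indeed needed since $\partial_x N_1$ and $\partial_x N_2$ both contain $\partial_x^2 r$, but you do not say how to close on it. Running the $V$-Duhamel formula at order $\partial_x^2$ forces $\partial_x^2 N_2$, which involves $\partial_x^3 r$ and $\partial_x^2\psi$ --- a loss of regularity. The paper resolves this by estimating $\partial_x^2 r$ separately via the \emph{scalar} $r$-equation with generator $L_0=\partial_x^2-2(1-q^2)$: the exponential damping of $\re^{tL_0}$ allows one derivative on the semigroup at cost $(t-s)^{-1/2}\re^{-2(1-q^2)(t-s)}$, which is integrable, and the remaining nonlinearity $\mathcal{N}_1$ involves only $\partial_x V$ and $\partial_x^2 r$ itself; see~\eqref{e:rest} and~\eqref{eq:restLoc}. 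This is the same device used in the proof of Theorem~\ref{thm:explong}. With these two fixes your argument is the paper's; your recovery of $\phi$ by direct time integration also works and gives the same bound as the paper's heat-semigroup Duhamel for $\phi$.
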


The proof of Theorem~\ref{thm:partloc} can be found in~\S\ref{sec:partloc}.

\begin{remark}{ \rm
It turns out that the decay rates obtained in Theorem~\ref{thm:partloc} mesh very well with the result in the nonlocalized setting. In fact, formally taking the limit $p \rightarrow \infty$ in Theorem~\ref{thm:partloc}, we recover the decay rates provided in Theorem~\ref{thm:explong}. However, we are no longer able to establish global existence of mild solutions due to the fact that for $p = \infty$ a logarithm appears in the relevant estimates, cf.~\eqref{e:etaest} and~\eqref{e:etaestLoc}, which explains the exponential long time scale in Theorem~
\ref{thm:explong}. We emphasize that such a loss of global existence is not necessarily artificial. In fact, for $p > 3$ sufficiently small nonnegative solutions in $L^1(\R) \cap L^\infty(\R)$ of the nonlinear heat equation $\partial_t u = \partial_x^2 u + u^p$ exist globally and decay diffusively with rate $t^{-\frac{1}{2}}$, cf.~\cite[Chapter 14]{SU17book}, whereas for $p = 3$ such solutions exist and decay with rate $t^{-\frac{1}{2}}$ only for exponentially long times~\cite{SU03}. Global existence cannot be established for $p = 3$, since all nonnegative nontrivial initial data in $\partial_t u = \partial_x^2 u + u^3$ blow up in finite time~\cite{HAYA}.}
\end{remark}

Finally, we consider the special case $q = 0$, for which the semigroup generated by the linearization of~\eqref{eq:modGL2} exhibits better decay rates when acting on the critical nonlinearity, see~\eqref{eq:refined2} in the upcoming Lemma~\ref{lem:difest}. Due to the improved decay it is possible to extend Theorem~\ref{thm:explong} to a global result, which yields nonlinear stability of the steady state solution $(1,0)$, i.e.~the periodic roll solution~\eqref{eq:solsmodGL} at $q = 0$, of the modified Ginzburg-Landau system~\eqref{eq:modGL} against $C_{\mathrm{ub}}^m$-perturbations. 

\begin{theorem}\label{thm:global}
Let $q = 0$, $\gamma \in \R$ and $D > 0$ with $D + \gamma > 0$. Let $\alpha \in (0,\frac{1}{4})$. Then, there exist $M_0, \epsilon_0 > 0$ such that for all $\epsilon \in (0,\epsilon_0)$, $r_0,\phi_0 \in C_{\mathrm{ub}}^{2}(\R)$ and $B_0 \in C_{\mathrm{ub}}^1(\R)$ satisfying
\begin{align*}
\norm[W^{2,\infty}]{r_0} + \norm[W^{2,\infty}]{\phi_0} + \norm[W^{1,\infty}]{B_0} < \epsilon,
\end{align*}
there exists a global mild solution
$$(r,\phi,B) \in C\left(\left[0,\infty\right),C_{\mathrm{ub}}^2(\R) \times C_{\mathrm{ub}}^2(\R) \times C_{\mathrm{ub}}^1(\R)\right),$$
to~\eqref{eq:modGL2} with initial condition $(r(0),\phi(0),B(0)) = (r_0,\phi_0,B_0)$ enjoying the estimates
\begin{align*}
\norm[L^\infty]{r(t)} + \norm[L^\infty]{B(t)} &\leq M_0\epsilon, \qquad
\norm[W^{1,\infty}]{\partial_x r(t)} + \norm[L^\infty]{\partial_x B(t)} \leq \frac{M_0\epsilon}{\sqrt{1+t}},
\end{align*}
for all $t \geq 0$. Additionally, the phase $\phi$ can be bounded as
\begin{align*}
\norm[L^\infty]{\phi(t)} \leq M_0\epsilon\left(1+t\right)^\alpha, \qquad \norm[W^{1,\infty}]{\partial_x \phi(t)} \leq M_0\epsilon\left(1+t\right)^{-\frac{1}{2} + \alpha},
\end{align*}
for all $t \geq 0$.
\end{theorem}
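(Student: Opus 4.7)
The plan is to run a global version of the pure $L^\infty$-bootstrap developed for Theorems~\ref{thm:explong} and~\ref{thm:partloc}, with the decisive new input being the refined semigroup estimate~\eqref{eq:refined2} in Lemma~\ref{lem:difest}, which is available precisely because $q=0$. Local well-posedness from~\S\ref{sec:local} supplies a unique maximal mild solution $(r,\phi,B)\in C([0,T_{\max}),C^2_{\mathrm{ub}}(\R)\times C^2_{\mathrm{ub}}(\R)\times C^1_{\mathrm{ub}}(\R))$ with the usual $W^{2,\infty}\times W^{2,\infty}\times W^{1,\infty}$ blow-up alternative. The central structural observation is that at $q=0$ the cross-couplings $2q\partial_x\phi$ and $2q\partial_x r$ in~\eqref{eq:modGL2} vanish: the $\phi$-equation is a pure heat equation driven only by the quadratic source $2(\partial_x r)(\partial_x\phi)$, while the linearisation of the $(r,B)$-block has Fourier symbol $\bigl(\begin{smallmatrix}-2-k^2 & 1 \\ -2\gamma k^2 & -Dk^2\end{smallmatrix}\bigr)$, with eigenvalues $-2+O(k^2)$ and $-(D+\gamma)k^2+O(k^4)$ near $k=0$, i.e.\ one exponentially damped mode and one diffusively damped conservation-law mode whose stability is guaranteed by $D+\gamma>0$.

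Mirroring the conclusion of the theorem, I would introduce the template functions
\begin{align*}
\eta(t)&=\sup_{0\leq s\leq t}\Bigl[\|r(s)\|_{L^\infty}+\|B(s)\|_{L^\infty}+\sqrt{1+s}\,\bigl(\|\partial_x r(s)\|_{W^{1,\infty}}+\|\partial_x B(s)\|_{L^\infty}\bigr)\Bigr],\\
\zeta(t)&=\sup_{0\leq s\leq t}\Bigl[(1+s)^{-\alpha}\|\phi(s)\|_{L^\infty}+(1+s)^{\frac{1}{2}-\alpha}\|\partial_x\phi(s)\|_{W^{1,\infty}}\Bigr],
\end{align*}
and aim for a key inequality of the form $\eta(t)+\zeta(t)\leq C\bigl(\varepsilon+\eta(t)^2+\eta(t)\zeta(t)+\zeta(t)^2\bigr)$ valid on $[0,T_{\max})$. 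Given such a bound, a standard continuity argument forces $\eta(t)+\zeta(t)\lesssim\varepsilon$ and, together with the blow-up alternative, $T_{\max}=\infty$. I would extract the inequality by writing Duhamel formulae for $(r,B)$ against the semigroup $\re^{\mathcal{L}t}$ generated by the linearised $(r,B)$-block and for $\phi$ against the heat semigroup, controlling the linear terms via~\eqref{eq:refined2} and the decay estimates of~\S\ref{sec:semdecomp}, and bounding every nonlinear term---$(\partial_x r)^2$, $(\partial_x\phi)^2$ and $-(\re^{2r}-1-2r)$ in the $r$-equation, $2(\partial_x r)(\partial_x\phi)$ in the $\phi$-equation, and $\gamma\partial_x^2(\re^{2r}-1-2r)$ in the $B$-equation---purely in $L^\infty$ by means of the template.

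The main obstacle is the source $(\partial_x\phi)^2$ driving the $r$-equation: under the template it decays only like $(1+s)^{-1+2\alpha}$, and convolving it against a generic diffusively-decaying semigroup kernel would yield growth of order $(1+t)^{2\alpha}$ in $\|r\|_{L^\infty}$, breaking the bootstrap. The resolution, which is the content of~\eqref{eq:refined2}, is that the source $\bigl((\partial_x\phi)^2,0\bigr)^\top$ has vanishing projection onto the conservation-law eigenvector of $\mathcal L$ at $k=0$ and therefore feeds essentially only the exponentially damped mode, so its contribution to $r$ is dominated by $\int_0^t\re^{-\mu(t-s)}(1+s)^{-1+2\alpha}\,\de s$, which is uniformly bounded; the constraint $\alpha<\tfrac14$ enters when controlling the higher-derivative pieces $\partial_x r,\partial_x^2 r$ after the usual $\int_0^{t/2}+\int_{t/2}^t$ Duhamel split as in~\eqref{eq:nlheat2}. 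The $B$-nonlinearity $\gamma\partial_x^2(\re^{2r}-1-2r)$ is handled by moving the two outer derivatives onto the semigroup, producing exactly the $(1+t)^{-1/2}$-decay of $\partial_x B$; the remaining $r$-nonlinearities $(\partial_x r)^2$ and $-(\re^{2r}-1-2r)$ are quadratic in quantities already controlled by $\eta$ and produce only lower-order contributions after the same fast/slow mode decomposition; and the $\phi$-source $2(\partial_x r)(\partial_x\phi)$ decays like $(1+s)^{-1+\alpha}$, whose Duhamel integral against the heat kernel grows precisely like $(1+t)^\alpha$, matching the template and explaining the allowed sublinear growth of $\|\phi\|_{L^\infty}$ in the statement.
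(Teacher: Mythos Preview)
Your overall architecture---local theory, template functions encoding the target rates, Duhamel bootstrap, continuity argument---matches the paper, and your observation that the $\phi$-equation decouples linearly at $q=0$ so one can work with a $2\times 2$ $(r,B)$-block is correct. But you misidentify where the refined estimate~\eqref{eq:refined2} is actually needed, and as written your treatment of the $\widetilde N_2:=\re^{2r}-2r-1$ contributions fails.

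The term $(\partial_x\phi)^2$ is not the most dangerous one. It sits in the $r$-slot only, so by the \emph{first} estimate in Lemma~\ref{lem:difest} (not~\eqref{eq:refined2}) its diffusive image carries an extra factor $(1+t-s)^{-1}$; combined with $\|(\partial_x\phi)^2\|_{L^\infty}\lesssim(1+s)^{-1+2\alpha}$ this integrates to a bounded quantity for any $\alpha<\tfrac12$. (Your description ``feeds essentially only the exponentially damped mode'' overstates it: the diffusive projection does not vanish identically, it merely vanishes at $k=0$, producing algebraic rather than exponential gain.) The genuinely critical term is $\widetilde N_2$, which obeys only $\|\widetilde N_2\|_{L^\infty}\lesssim\|r\|_{L^\infty}^2\lesssim\eta(t)^2$ with \emph{no} decay in $s$. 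If you treat $-\widetilde N_2$ in the $r$-equation by the same first-component mechanism, you get $\int_0^t(1+t-s)^{-1}\eta^2\,\de s\sim\eta^2\ln(1+t)$, which diverges. Likewise, handling $\gamma\partial_x^2\widetilde N_2$ in the $B$-equation by ``moving the two derivatives onto the semigroup'' yields at best $\int_0^t(1+t-s)^{-1}\eta^2\,\de s$ for $\|B\|_{L^\infty}$ and $\int_0^t(1+t-s)^{-3/2}\eta^2\,\de s$ for $\|\partial_x B\|_{L^\infty}$; the first diverges and the second is bounded but does not decay like $(1+t)^{-1/2}$.

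The resolution in the paper is precisely~\eqref{eq:refined2}: the specific combination $(-\widetilde N_2,\,0,\,\gamma\partial_x^2\widetilde N_2)$---i.e.\ the $\widetilde N_2$-pieces from the $r$- and $B$-equations taken \emph{together}---enjoys a further cancellation at order $k^2$ in the diffusive projection (this is the identity~\eqref{eq:specid}), so that at $q=0$ one gains $(1+t-s)^{-2}$ rather than $(1+t-s)^{-1}$. This is what makes $\int_0^t(1+t-s)^{-2}\eta^2\,\de s$ bounded and $\int_0^t(1+t-s)^{-2}\|\partial_x\widetilde N_2\|_{L^\infty}\,\de s\lesssim(1+t)^{-1/2}$. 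You cite~\eqref{eq:refined2} up front but never actually deploy it on the right object; once you pair $-\widetilde N_2$ with $\gamma\partial_x^2\widetilde N_2$ and invoke~\eqref{eq:refined2}, the rest of your outline goes through. The constraint $\alpha<\tfrac14$ then indeed arises from the $\widetilde N_1$ contribution to $\|\partial_x V\|_{L^\infty}$, where $\int_0^t(1+t-s)^{-3/2}(1+s)^{-1+2\alpha}\,\de s\lesssim(1+t)^{-1+2\alpha}$ must be $\lesssim(1+t)^{-1/2}$.
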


The proof of Theorem~\ref{thm:global} can be found in~\S\ref{sec:q0}. 

\begin{remark}{\rm
The solution $(r,\phi,B)$ of~\eqref{eq:modGL2}, established in Theorems~\ref{thm:explong},~\ref{thm:partloc} and~\ref{thm:global}, is a classical solution if it holds in addition $r_0,\phi_0 \in C_{\mathrm{ub}}^{3}(\R)$ and $B_0 \in C_{\mathrm{ub}}^2(\R)$. That is, the established solution $(r,\phi,B)$ lies in the space
$$C^1\big(\big[0,\re^{\epsilon_0/\epsilon}-2\big],C_{\mathrm{ub}}^1(\R) \times C_{\mathrm{ub}}^1(\R) \times C_{\mathrm{ub}}(\R)\big) \cap C\big(\big[0,\re^{\epsilon_0/\epsilon}-2\big],C_{\mathrm{ub}}^3(\R) \times C_{\mathrm{ub}}^3(\R) \times C_{\mathrm{ub}}^2(\R)\big).$$
We refer to the upcoming~\S\ref{sec:local} for more details.
}\end{remark}

\begin{remark} \label{rem:optimality}
{ \rm
The conservation law structure and the translational invariance of~\eqref{eq:modGL} induce a 2-parameter family of constant solutions of~\eqref{eq:modGL2} given by
\begin{align} \left(\ln\left(\frac{1-q^2+b}{1-q^2}\right),\tau,b\right), \end{align}
where $\tau,b \in \R$ are such that $1-q^2+b > 0$. Hence, $L^\infty$-decay cannot be expected in any component of solutions of~\eqref{eq:modGL2} with initial conditions in $C_{\mathrm{ub}}^m(\R)$. That is, the decay rates on the $B$- and $r$-components in Theorems~\ref{thm:explong} and~\ref{thm:global} are sharp. The sharpness of the decay at rate $t^{-\frac{1}{2}}$ of their derivatives can be explained, at least on the linear level, by the smoothing action of the associated semigroup. Finally, since $r$ is linearly exponentially damped in~\eqref{eq:modGL2}, it can be formally expressed in terms of $ \phi $ and $ B $, so that in lowest order we obtain
\begin{align} \label{r:exp}
r = - \frac{q}{1-q^2} \partial_x \phi +  \frac{1}{2(1-q^2)} B + \ldots,
\end{align}
This indicates that $\partial_x \phi$ admits the same decay as $B$ and $r$ as long as $q \neq 0$ (and the same for their derivatives). Thus, we expect that the presented decay rates on $\partial_x \phi$ in Theorem~\ref{thm:explong} are sharp for $q \neq 0$. Theorem~\ref{thm:global} shows that for $q = 0$ one can establish stronger decay rates on $\phi$ and its derivatives. We reason that these rates must be optimal in the limit $\alpha \downarrow 0$: $\phi$ cannot be decaying due to translational invariance and taking a derivative yields a decay factor $t^{-\frac{1}{2}}$ (at least on the linear level).
}\end{remark}

\subsection{Local existence and uniqueness} \label{sec:local}

Although the perturbation equation~\eqref{eq:modGL2} is quasilinear, it does not exhibit a loss of regularity. This can be seen by introducing the local wavenumber $\psi = \partial_x \phi$ and the derivative $w = \partial_x r$ of the amplitude, which turns~\eqref{eq:modGL2} into the parabolic semilinear real system
\begin{align}
	\label{eq:modGL3}
\begin{split}
	\partial_t r &= \partial_x^2 r + B - 2q \psi - \left(1-q^2\right) \left(\re^{2r} - 1\right) + w^2 - \psi^2, \\
	\partial_t w &= \partial_x^2 w + \partial_x B - 2q \partial_x \psi - \left(1-q^2\right) \partial_x \left(\re^{2r} - 1\right) + \partial_x \left(w^2\right) - \partial_x\left(\psi^2\right), \\
	\partial_t \phi &= \partial_x^2 \phi + 2q w + 2w\psi, \\
	\partial_t \psi &= \partial_x^2 \psi + 2q \partial_x w + 2\partial_x (w\psi), \\
	\partial_t B &= D \partial_x^2 B + 2\gamma \left(1-q^2\right) \partial_x \left(w \re^{2r}\right).
\end{split}
\end{align}

Local existence and uniqueness for parabolic semilinear systems of the form~\eqref{eq:modGL3} in spaces of bounded and uniformly continuous functions is standard. In particular,~\eqref{eq:modGL3} is of the form
\begin{align}
\label{eq:modGL4}
\partial_t W = \check{L}W + \check{N}(W),
\end{align}
where the linearity $\check{L}$ is a sectorial operator on $X = C_{\mathrm{ub}}(\R,\R^5)$ with dense domain $D(\check{L}) = C_{\mathrm{ub}}^2(\R,\R^5)$, cf.~\cite[Corollary~3.1.9]{LUN}, and the nonlinearity $\check{N}$ is locally Lipschitz continuous from the intermediate space $C_{\mathrm{ub}}^1(\R,\R^5)$ into $X$. Hence, we arrive at the following result, cf.~\cite[Theorem~7.1.3 and Propositions~7.1.8 and~7.1.10]{LUN}.

\begin{proposition} \label{prop:local}
Let $W_0 \in C_{\mathrm{ub}}^1(\R,\R^5)$. Then, there exist a maximal time $T \in (0,\infty]$ and a unique mild solution $W \in C\left([0,T),C_{\mathrm{ub}}^1(\R,\R^5)\right)$ of~\eqref{eq:modGL4} with initial condition $W(0) = W_0$. If we have $T < \infty$, then it holds
\begin{align} \sup_{t \uparrow T} \norm[W^{1,\infty}]{W(t)} = \infty.\end{align}
Moreover, if $W_0 \in C_{\mathrm{ub}}^2(\R,\R^5)$, then $W$ lies in $C\left([0,T),C_{\mathrm{ub}}^2(\R,\R^5)\right) \cap C^1\left([0,T),C_{\mathrm{ub}}(\R,\R^5)\right)$ and is a classical solution of~\eqref{eq:modGL4}.
\end{proposition}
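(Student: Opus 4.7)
The plan is to cast the system as the abstract semilinear evolution equation $\partial_t W = \check L W + \check N(W)$ on $X = C_{\mathrm{ub}}(\R,\R^5)$, exactly as the statement indicates, and to invoke the standard local well-posedness theory for semilinear problems with sectorial linear part. The two hypotheses to verify so that \cite[Theorem~7.1.3]{LUN} applies are: (i) $\check L$ is sectorial on $X$ with dense domain $D(\check L) = C_{\mathrm{ub}}^2(\R,\R^5)$; and (ii) $\check N$ is locally Lipschitz continuous from the intermediate space $C_{\mathrm{ub}}^1(\R,\R^5)$ into $X$.

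For (i), I would split $\check L = \check L_0 + \check L_1$, where $\check L_0$ is the diagonal matrix differential operator with entries $\partial_x^2, \partial_x^2, \partial_x^2, \partial_x^2, D\partial_x^2$, and $\check L_1$ collects the remaining linear (zeroth- and first-order) couplings such as $-2q\psi$, $B$, $2qw$, $\partial_x B$, $-2q\partial_x \psi$ and $2q\partial_x w$. Sectoriality of $\partial_x^2$ on $C_{\mathrm{ub}}(\R)$ with dense domain $C_{\mathrm{ub}}^2(\R)$ is \cite[Corollary~3.1.9]{LUN}, and $\check L_1 : D(\check L_0) \to X$ is $\check L_0$-bounded with relative bound zero (since $\partial_x$ is so with respect to $\partial_x^2$), which preserves sectoriality and the domain. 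For (ii), I would enumerate the nonlinear terms in \eqref{eq:modGL3} and observe that they all have the schematic form $P(W, \partial_x W)\,e^{\alpha r}$ with $P$ a polynomial and $\alpha \in \{0,2\}$: explicitly the entries involve $e^{2r}-1$, $w^2$, $\psi^2$, $w\psi$, together with derivatives $\partial_x(e^{2r}) = 2we^{2r}$, $\partial_x(w^2) = 2w\partial_x w$, $\partial_x(\psi^2) = 2\psi\partial_x\psi$, $\partial_x(w\psi) = (\partial_x w)\psi + w\partial_x\psi$, and $\partial_x(we^{2r}) = (\partial_x w + 2w^2)e^{2r}$. Since membership of $W$ in $C_{\mathrm{ub}}^1(\R,\R^5)$ controls both $W$ and $\partial_x W$ in $C_{\mathrm{ub}}$, and since $r \mapsto e^{2r}$ is locally Lipschitz on $C_{\mathrm{ub}}$ via the elementary bound $|e^{2a} - e^{2b}| \leq 2e^{2\max(|a|,|b|)}|a-b|$, every such term defines a locally Lipschitz map $C_{\mathrm{ub}}^1(\R,\R^5) \to X$.

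With (i) and (ii) established, \cite[Theorem~7.1.3]{LUN} yields a unique maximal mild solution $W \in C([0,T), C_{\mathrm{ub}}^1(\R,\R^5))$ with $W(0) = W_0$. The blowup alternative $\sup_{t \uparrow T}\norm[W^{1,\infty}]{W(t)} = \infty$ whenever $T < \infty$ is \cite[Proposition~7.1.8]{LUN}. Finally, whenever $W_0 \in C_{\mathrm{ub}}^2(\R,\R^5) = D(\check L)$, the regularity upgrade \cite[Proposition~7.1.10]{LUN} places $W$ in $C([0,T), D(\check L)) \cap C^1([0,T), X)$, at which point \eqref{eq:modGL4} holds pointwise in $X$ and $W$ is a classical solution. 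I do not anticipate a genuine obstacle: the argument is essentially a verification exercise, with the only mildly nontrivial point being the local Lipschitz estimate for the exponential terms, which is routine.
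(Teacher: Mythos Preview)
Your proposal is correct and follows essentially the same approach as the paper: the paper likewise observes that $\check L$ is sectorial on $C_{\mathrm{ub}}(\R,\R^5)$ with domain $C_{\mathrm{ub}}^2(\R,\R^5)$ (citing \cite[Corollary~3.1.9]{LUN}), that $\check N$ is locally Lipschitz from $C_{\mathrm{ub}}^1(\R,\R^5)$ into $C_{\mathrm{ub}}(\R,\R^5)$, and then appeals to \cite[Theorem~7.1.3 and Propositions~7.1.8 and~7.1.10]{LUN}. Your write-up simply fills in the verification details (the relative-bound perturbation for the first-order couplings and the elementary Lipschitz estimate for the exponential terms) that the paper leaves implicit.
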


\subsection{Replacing the phase variable by the local wavenumber} \label{sec:localw}

Since the perturbation equation~\eqref{eq:modGL2}, as well as the extended system~\eqref{eq:modGL3}, only depend on derivatives of the phase $\phi$, the dynamics is captured by the $\psi$-, $r$- and $B$-equation in~\eqref{eq:modGL3}. Indeed, the $w$- and $\phi$-components can be recovered by differentiation of $r$ and integration of $\psi$, respectively. Thus, instead of the full system~\eqref{eq:modGL3}, it suffices to consider its $\psi$-, $r$- and $B$-components only, which constitute the system
\begin{align}
	\partial_t V = L V + N_1(V) + \partial_x N_2(V), \qquad V = (r,\psi,B) \in \R^3,
	\label{eq:pertModGLAbbreviation}
\end{align}
with linearity
\begin{align*}
	L = \begin{pmatrix}
		\partial_x^2 - 2 \left(1-q^2\right) & -2q & 1 \\
		2q \partial_x^2 & \partial_x^2 & 0 \\
		2\gamma\left(1-q^2\right)\partial_x^2 & 0 & D\partial_x^2
	\end{pmatrix}
\end{align*}
and nonlinearities
\begin{align*}
	N_1(V) = \begin{pmatrix}
		(\partial_x r)^2 - \psi^2 - \left(1-q^2\right)\left(\re^{2r}-2r-1\right) \\
		0 \\
		0
	\end{pmatrix}, \qquad N_2(V) = \begin{pmatrix}
		0 \\ 2\psi \partial_x r \\ \gamma\left(1-q^2\right) \partial_x \left(\re^{2r}-2r\right)
	\end{pmatrix}.
\end{align*}
We observe that~\eqref{eq:pertModGLAbbreviation} possesses no linear terms with first-order spatial derivatives. Indeed, the operator $L$ contains second derivatives only, so that the Fourier symbol
\begin{align*} \widehat{L}(k) = \begin{pmatrix}
		-k^2 - 2 \left(1-q^2\right) & -2q & 1 \\
		-2qk^2 & -k^2 & 0 \\
		-2\gamma\left(1-q^2\right)k^2 & 0 & -Dk^2
	\end{pmatrix},\end{align*}
of $L$ is analytic in $k^2$. At the critical Fourier mode $k = 0$ we find that $\widehat{L}(0)$ has a semisimple eigenvalue $0$ of algebraic and geometric multiplicity $2$ and a simple negative eigenvalue $-2(1-q^2)$. This suggests a decomposition in diffusive and exponentially damped modes. We will introduce mode filters that facilitate the associated decomposition of the semigroup $\re^{tL}$ generated by $L$ and derive estimates on the components. Such estimates require control on the spectrum of $\widehat{L}(k)$, also for $k$ away from $0$, which we will acquire first in the upcoming subsection.

\subsection{Spectral stability} \label{sec:spec}

In this subsection we establish spectral stability for the periodic roll solution~\eqref{eq:solsmodGL} of the modified Ginzburg-Landau system~\eqref{eq:modGL}. That is, we prove that the spectrum of the linearization $L$ is confined to the open-left half plane, except for a parabolic touching at the origin. Being a constant-coefficient operator, the spectrum of $L$ on $C_{\mathrm{ub}}(\R,\C^3)$ is the same as its spectrum on $L^2(\R,\C^3)$. So, it is determined by its Fourier symbol through the relation
$$\sigma(L) = \bigcup_{k \in \R} \sigma\left(\widehat{L}(k)\right).$$

First, we analyze the spectrum of $\widehat{L}(k)$ away from the critical Fourier mode $k = 0$. We show that the spectrum of $\widehat{L}(k)$ is confined to the open left-half plane for all $k \in \R \setminus \{0\}$. In addition, by studying the spectrum of the Fourier symbol $\widehat{L}(k)$ in the limit $k \to \pm \infty$, we confirm that the sectorial operator $L$ is genuinely of second-order, i.e.~its spectrum is contained in a left-opening parabola, which is needed for obtaining high-frequency semigroup estimates in~\S\ref{sec:semdecomp}.

\begin{lemma}[High-frequency spectrum] \label{lem:spec_cont}
Let $q,\gamma \in \R$ and $D > 0$ satisfy~\eqref{e:spec}. Then, for each $k \in \R \setminus \{0\}$ it holds
$$\sup \Re \sigma(\widehat{L}(k)) < 0.$$
Moreover, $L_\infty = \displaystyle \lim_{\ell \to 0} \ell^2 \widehat{L}\left(\ell^{-1}\right)$ exists and satisfies $\sup \Re \sigma(L_\infty) < 0$.
\end{lemma}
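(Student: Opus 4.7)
My plan is to establish the high-frequency spectral bound by applying the Routh--Hurwitz criterion to the characteristic polynomial of the symbol $\widehat L(k)$, and to identify $L_\infty$ by direct computation. Since $\widehat L(k)$ is real for real $k$ and depends only on $\mu := k^2 > 0$, expanding $\det(\lambda I - \widehat L(k))$ along the third column (which has only two nonzero entries) yields a real cubic
\[ p(\lambda) = \lambda^3 + A_2(\mu)\lambda^2 + A_1(\mu)\lambda + A_0(\mu), \]
whose roots all lie in the open left half-plane if and only if $A_0, A_1, A_2 > 0$ together with $A_2 A_1 > A_0$.

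A direct calculation of trace, sum of principal $2 \times 2$ minors, and determinant gives
\[ A_2 = (D+2)\mu + 2(1-q^2), \qquad A_0 = D\mu^3 + 2\bigl[(D+\gamma)(1-q^2) - 2Dq^2\bigr]\mu^2, \]
together with $A_1 = (2D+1)\mu^2 + 2\bigl[(D+\gamma)(1-q^2) + (1-3q^2)\bigr]\mu$. Assumption~\eqref{e:spec} amounts to $(D+\gamma)(1-q^2) - 2Dq^2 > 0$ and $q^2 < 1/3$, and in particular forces $D+\gamma > 0$. From this, the positivity of $A_0, A_1, A_2$ on $\mu > 0$ is immediate. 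The main step is the inequality $A_2 A_1 > A_0$, which I would check coefficient-by-coefficient in $\mu$ after noting that both polynomials vanish at $\mu = 0$. The $\mu^3$-coefficient of $A_2 A_1 - A_0$ reduces to $(D+2)(2D+1) - D = 2(D+1)^2 > 0$; the $\mu$-coefficient equals $4(1-q^2)[(D+\gamma)(1-q^2) + 1 - 3q^2]$, which is positive; and, after collecting the $(D+\gamma)(1-q^2)$-terms separately, the $\mu^2$-coefficient reorganizes into
\[ 2(D+1)(D+\gamma)(1-q^2) + 2(D+2)(1-3q^2) + 2(1-q^2)(2D+1) + 4Dq^2, \]
each of whose four summands is nonnegative under~\eqref{e:spec}. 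The algebraic rearrangement for this last $\mu^2$-coefficient is the only real obstacle; the rest is routine bookkeeping.

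For the second assertion, each entry of $\ell^2 \widehat L(\ell^{-1})$ is a polynomial in $\ell^2$, so the limit as $\ell \to 0$ exists and equals the lower-triangular matrix
\[ L_\infty = \begin{pmatrix} -1 & 0 & 0 \\ -2q & -1 & 0 \\ -2\gamma(1-q^2) & 0 & -D \end{pmatrix}. \]
Its spectrum is $\{-1, -D\}$, so $\sup \Re \sigma(L_\infty) = -\min(1,D) < 0$ since $D > 0$. This reflects that $L$ is sectorial of second order at high frequency, which is precisely what the high-frequency semigroup estimates in the upcoming~\S\ref{sec:semdecomp} require.
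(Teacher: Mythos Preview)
Your proof is correct and follows essentially the same route as the paper: apply the Routh--Hurwitz criterion to the characteristic cubic of $\widehat L(k)$, verify positivity of the coefficients and of $A_2A_1-A_0$ term-by-term in $\mu=k^2$, and compute $L_\infty$ directly. Your grouping of the $\mu^2$-coefficient of $A_2A_1-A_0$ differs cosmetically from the paper's, but expands to the same expression; the remaining checks are identical.
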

\begin{proof}
Let $k \in \R \setminus \{0\}$. The characteristic polynomial of $\widehat{L}(k)$ is given by
\begin{align*} p(\nu;k) = \nu^3 + a_2(k) \nu^2 + a_1(k)\nu + a_0(k),\end{align*}
with coefficients
\begin{align*}
a_2(k) &= 2\left(1-q^2\right) + (2 + D) k^2, \\
a_1(k) &= \left(2 + k^2 - 6 q^2 + 2 D \left(1 + k^2 - q^2\right) + 2 \gamma (1 - q^2)\right)k^2, \\
a_0(k) &= \left(D k^2 + 2 (D + \gamma)\left(1 - q^2\right) - 4Dq^2\right) k^4.
\end{align*}
By the Routh-Hurwitz criterion all roots of $p(\cdot;k)$ reside in the open left-half plane if and only if $a_2(k),a_0(k)$ are positive and $a_{2}(k)a_{1}(k) > a_{0}(k)$. It is readily seen that~\eqref{e:spec} implies that $a_2(k),a_0(k)$ are positive. For the last condition we consider the quartic
\begin{align*}
Q(k) = a_2(k)a_1(k) - a_0(k) = b_4 k^4 + b_2 k^2 + b_0,
\end{align*}
with coefficients
\begin{align*}
b_4 &= (1 + D)^2,\\
b_2 &= 3\left(1 - 3 q^2\right) + 2q^2 + D(D + \gamma)\left(1 - q^2\right) + (D + \gamma)\left(1 - q^2\right) + 3D\left(1 - q^2\right),\\
b_0 &= 2\left(1 - q^2\right)\left(1 - 3q^2 + (D + \gamma)\left(1 - q^2\right)\right).
\end{align*}
The coefficients $b_0,b_2$ and $b_4$ are all strictly positive by~\eqref{e:spec}, where we use $D + \gamma > 2 Dq^2/(1 - q^2) \geq 0$. So, we have $a_{2}(k)a_{1}(k) > a_{0}(k)$, which completes the proof of the first assertion.

Finally, we observe that
$$L_\infty = \lim_{\ell \to 0} \ell^2 \widehat{L}\left(\ell^{-1}\right) = \begin{pmatrix}
		-1 & 0 & 0 \\
		-2q & -1 & 0 \\
		-2\gamma\left(1-q^2\right) & 0 & -D
	\end{pmatrix},$$
has the eigenvalue $-1$ of algebraic multiplicity $2$ and the simple eigenvalue $-D$, which proves the second assertion.
\end{proof}

Next, we study the spectrum of $\widehat{L}(k)$ in a neighborhood of the critical mode $k = 0$. We establish that the two most critical eigenvalues of $\widehat{L}(k)$ touch the origin in a quadratic tangency as $k$ passes through $0$. This corresponds to two critical spectral curves of $L$ being attached to the origin, one arising due to translational invariance of~\eqref{eq:modGL} and one arising due to the conservation law present in~\eqref{eq:modGL}.

\begin{lemma}[Low-frequency spectrum] \label{lem:specl}
Let $q,\gamma \in \R$ and $D > 0$ satisfy~\eqref{e:spec}. Then, there exists $k_0 > 0$ such that $\widehat{L}(k)$ can be block diagonalized as
\begin{align}\widehat{L}(k) = S(k)^{-1}\begin{pmatrix} \Lambda_c(k) & 0_{2 \times 1} \\ 0_{1 \times 2} & \lambda_s(k) \end{pmatrix}S(k), \qquad |k| < k_0, \label{block}\end{align}
where $S \colon (-k_0,k_0) \to \operatorname{GL}_3(\C)$, $\Lambda_c \colon (-k_0,k_0) \to \C^{2 \times 2}$ and $\lambda_s \colon (-k_0,k_0) \to \C$ are analytic in $k^2$ and satisfy the following assertions:
\begin{itemize}
\item[i.] $\Lambda_c(0) = \Lambda_c'(0) = 0$;
\item[ii.] $\sup \Re \sigma(\Lambda_c(k)) < 0$ for $k \in (-k_0,k_0) \setminus \{0\}$;
\item[iii.] $\sup \Re \sigma(\Lambda_c''(0)) < 0$;
\item[iv.] $\Re(\lambda_s(k)) < 0$ for $k \in (-k_0,k_0)$.
\end{itemize}
Finally, the spectral projection $\mathcal{P} \colon (-k_0,k_0) \to \C^{3 \times 3}$ given by
\begin{align}\mathcal{P}(k) = S(k)^{-1}\begin{pmatrix} 0_{2 \times 2} & 0_{2 \times 1} \\ 0_{1 \times 2} & 1\end{pmatrix} S(k),\label{def:specproj}\end{align}
onto the eigenspace associated with the eigenvalue $\lambda_s(k)$ of $\widehat{L}(k)$ is analytic in $k^2$ and satisfies
\begin{align}
\begin{split}
\mathcal{P}(0) &= \begin{pmatrix}
 1 & \frac{q}{1-q^2} & -\frac{1}{2 \left(1-q^2\right)} \\
 0 & 0 & 0 \\
 0 & 0 & 0
\end{pmatrix}, \\ \mathcal{P}''(0) &= \begin{pmatrix} \frac{\gamma}{1-q^2} - \frac{2q^2}{\left(1 - q^2\right)^2} & \frac{2\gamma q}{\left(1 - q^2\right)^2}-\frac{4q^3}{\left(1 - q^2\right)^3} & \frac{1 + 3 q^2}{2\left(1 - q^2\right)^3} - \frac{D + 2 \gamma}{2\left(1 - q^2\right)^2}\\
\frac{2q}{1-q^2} & \frac{2q^2}{\left(1 - q^2\right)^2} & -\frac{q}{\left(1 - q^2\right)^2} \\
2\gamma & \frac{2\gamma q}{1-q^2} &-\frac{\gamma}{1-q^2}\end{pmatrix}.\end{split} \label{e:specproj}\end{align}
\end{lemma}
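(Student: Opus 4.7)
The plan is to read the lemma as a standard application of analytic perturbation theory to the matrix family $z \mapsto A_0 + z A_2$ with $z = k^2$, where $A_0 = \widehat{L}(0)$ and $A_2 = \tfrac{1}{2}\widehat{L}''(0)$, so that $\widehat{L}$ is polynomial (hence entire) in $k^2$. A direct inspection gives $\sigma(A_0) = \{0, -2(1-q^2)\}$ with $0$ semisimple of algebraic and geometric multiplicity two (since the last two rows of $A_0$ vanish while its first row is nonzero) and $\lambda_0 := -2(1-q^2)$ simple. Because these two spectral groups are isolated from each other, standard analytic perturbation theory yields, in a neighborhood of $k = 0$, an analytic spectral projection $\mathcal{P}(k)$ onto the continuation of the $\lambda_0$-eigenspace, an analytic simple eigenvalue $\lambda_s(k)$ with $\lambda_s(0) = \lambda_0$, and an analytic similarity $S(k)$ realizing the block decomposition~\eqref{block} with central block $\Lambda_c(k)$, all analytic in $k^2$. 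Since $\Lambda_c(0)$ represents $A_0$ on its (semisimple) $0$-eigenspace, it equals the zero matrix, and analyticity in $k^2$ forces $\Lambda_c'(0) = 0$, establishing (i).

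For (ii) and (iv) I would combine this construction with Lemma~\ref{lem:spec_cont}: the eigenvalues of $\widehat{L}(k)$ consist precisely of $\lambda_s(k)$ together with the two eigenvalues of $\Lambda_c(k)$, so the assertion $\sup \Re \sigma(\widehat{L}(k)) < 0$ for $k \neq 0$ immediately gives both $\sup \Re \sigma(\Lambda_c(k)) < 0$ for $0 < |k| < k_0$ and $\Re \lambda_s(k) < 0$ for those $k$. Combining the latter with $\lambda_s(0) = -2(1-q^2) < 0$ and continuity, after possibly shrinking $k_0$, completes (iv).

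For (iii) I would compute $\Lambda_c''(0)$ by first-order degenerate perturbation theory. Choosing right eigenvectors $e_1 = (1,0,2(1-q^2))^\top$, $e_2 = (0,1,2q)^\top$ of $A_0$ for the eigenvalue $0$, together with dual left kernel vectors $\ell_1 = (0, -\tfrac{q}{1-q^2}, \tfrac{1}{2(1-q^2)})$, $\ell_2 = (0,1,0)$ of $A_0$ normalized so that $\ell_i e_j = \delta_{ij}$, the standard reduction formula gives
\begin{equation*}
\tfrac{1}{2}\Lambda_c''(0) \;=\; \bigl(\ell_i A_2 e_j\bigr)_{1 \le i, j \le 2} \;=\; \begin{pmatrix} -(D+\gamma) + \tfrac{2q^2}{1-q^2} & \tfrac{q(1-D)}{1-q^2} \\[2pt] -2q & -1 \end{pmatrix}.
\end{equation*}
A short calculation gives $\det(\tfrac{1}{2}\Lambda_c''(0)) = D + \gamma - \tfrac{2Dq^2}{1-q^2}$ and $\operatorname{tr}(\tfrac{1}{2}\Lambda_c''(0)) = -(D+\gamma) - \tfrac{1-3q^2}{1-q^2}$; the former is positive by the first inequality in~\eqref{e:spec}, while the latter is strictly negative since $q^2 < 1/3$ and $D + \gamma > 0$ also follows from~\eqref{e:spec}. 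The Routh--Hurwitz criterion for $2 \times 2$ matrices then yields (iii).

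It remains to identify $\mathcal{P}(0)$ and $\mathcal{P}''(0)$. A right eigenvector $e_3 = (1,0,0)^\top$ and a left eigenvector $\ell = (1, \tfrac{q}{1-q^2}, -\tfrac{1}{2(1-q^2)})$ for $\lambda_0$, normalized so $\ell e_3 = 1$, give $\mathcal{P}(0) = e_3 \ell$, matching~\eqref{e:specproj}. For the second derivative I would use the Cauchy representation $\mathcal{P}(k) = \tfrac{1}{2\pi \ri}\oint_\Gamma (zI - \widehat{L}(k))^{-1} \de z$ over a small contour $\Gamma$ around $\lambda_0$, expand the resolvent as $(zI - \widehat{L}(k))^{-1} = R_0(z) + k^2 R_0(z) A_2 R_0(z) + O(k^4)$ with $R_0(z) = (zI - A_0)^{-1}$, and decompose $R_0(z) = \tfrac{1}{z - \lambda_0}\mathcal{P}(0) + \tfrac{1}{z}(I - \mathcal{P}(0))$ to evaluate residues, yielding the classical identity
\begin{equation*}
\mathcal{P}''(0) \;=\; -\tfrac{1}{1-q^2}\bigl(\mathcal{P}(0) A_2 (I - \mathcal{P}(0)) + (I - \mathcal{P}(0)) A_2 \mathcal{P}(0)\bigr),
\end{equation*}
from which a direct matrix multiplication reproduces the matrix in~\eqref{e:specproj}. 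The conceptual work is entirely contained in the perturbation-theoretic setup and in invoking Lemma~\ref{lem:spec_cont}; the main obstacle is purely computational, namely the careful bookkeeping needed to recover every entry of $\mathcal{P}''(0)$ as stated.
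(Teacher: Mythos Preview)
Your proof is correct and follows essentially the same route as the paper: both arguments invoke analytic perturbation theory for the family $k^2 \mapsto \widehat{L}(k)$ to obtain the block diagonalization and analyticity in $k^2$, deduce (i) from semisimplicity of the zero eigenvalue, pull (ii) and (iv) from Lemma~\ref{lem:spec_cont}, verify (iii) via the first-order reduction of $\tfrac{1}{2}\widehat{L}''(0)$ to the central subspace, and compute $\mathcal{P}(0)$, $\mathcal{P}''(0)$ from the standard resolvent expansion. The only cosmetic differences are that for (iii) the paper writes out the two eigenvalues $\lambda_\pm^{(1)}$ of the reduced matrix explicitly and checks their sign directly, whereas you check trace and determinant (which is cleaner); and for $\mathcal{P}''(0)$ the paper simply cites Kato's formula~(2.14) in Chapter~II.2, while you rederive the identity $\mathcal{P}''(0) = -\tfrac{1}{1-q^2}\bigl(\mathcal{P}(0)A_2(I-\mathcal{P}(0)) + (I-\mathcal{P}(0))A_2\mathcal{P}(0)\bigr)$ from the Cauchy integral.
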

\begin{proof}
Recall that $\widehat{L}(k)$ is analytic in $k^2$ and that
$$\widehat{L}(0) = \begin{pmatrix}
		- 2 \left(1-q^2\right) & -2q & 1 \\
		0 & 0 & 0 \\
		0 & 0 & 0
\end{pmatrix},$$
has a semisimple eigenvalue $0$ of multiplicity $2$ and a simple eigenvalue $-2(1-q^2)$. Hence, standard analytic perturbation theory~\cite[Chapter~II.1]{KAT} implies that there exists $\mu_0,k_0 > 0$ such that for $|k| \leq k_0$ the smallest eigenvalue $\lambda_s(k)$ of $\widehat{L}(k)$ is simple, analytic in $k^2$, satisfies $\Re(\lambda_s(k)) < -\mu_0$ and can be separated from the rest of the spectrum. Consequently, $\widehat{L}(k)$ can be block diagonalized as in~\eqref{block}, where the associated basis transformation $S(k) \in \operatorname{GL}_3(\C)$ and upper $(2\times 2$)-block $\Lambda_c(k)$ can be chosen analytic in $k^2$, cf.~\cite[Chapter~II.4]{KAT}. Clearly, $\mathcal{P}(k)$ is the spectral projection onto the eigenspace associated with the eigenvalue $\lambda_s(k)$ of $\widehat{L}(k)$ for $|k| < k_0$. Following~\cite[Chapter~II.2]{KAT} we find that $\mathcal{P}(k)$ must be analytic in $k^2$ and satisfies~\eqref{e:specproj}. In particular, $\mathcal{P}''(0)$ can be computed with the aid of formula~(2.14) in~\cite[Chapter~II.2]{KAT}.

All that remains is to verify the assertions i.-iii. The first assertion follows by the facts that $0$ is a semisimple eigenvalue of $\widehat{L}(k)$ of multiplicity $2$ and that $\Lambda_c(k)$ is analytic in $k^2$. Moreover, the second assertion is directly implied by Lemma~\ref{lem:spec_cont}. Verifying the third assertion is more elaborate. We determine how the semisimple eigenvalue $0$ of $\widehat{L}(k)$ at $k = 0$ splits for small $k \in (-k_0,k_0)$. Proceeding as in~\cite[Chapter~II.2.3]{KAT} we first compute the eigenvalues of the matrix $\frac{1}{2} \mathcal{P}_c \widehat{L}''(0) \mathcal{P}_c$, where $\mathcal{P}_c = I-\mathcal{P}(0)$ is the spectral projection on the two-dimensional neutral eigenspace of $\widehat{L}(0)$. Thus, we calculate
$$\widehat{L}''(0) = \begin{pmatrix}
		-2 & 0 & 0 \\
		-4q & -2 & 0 \\
		-4\gamma\left(1-q^2\right) & 0 & -2D
	\end{pmatrix},$$
and find that the eigenvalues of $\frac{1}{2} \mathcal{P}_c \widehat{L}''(0) \mathcal{P}_c$ are $0$ and
\begin{align*}
\lambda^{(1)}_\pm = -\frac{1}{2}\left(1+D+\gamma\right) + \frac{q^2}{1-q^2} \pm \sqrt{\left(-\frac{1}{2}\left(1+D+\gamma\right) + \frac{q^2}{1-q^2}\right)^2 - D - \gamma + \frac{2 D q^2}{1 - q^2}}.
\end{align*}
It follows by~\eqref{e:spec} that $\lambda^{(1)}_\pm$ are both strictly negative, where we use $-\frac{1}{2} + q^2/(1-q^2) < 0$ and $D+\gamma > 2 Dq^2/(1 - q^2) \geq 0$. Following~\cite[Chapter~II.2.3]{KAT} the eigenvalues $\lambda_{c,\pm}(k)$ of $\widehat{L}(k)$, which converge to the semisimple eigenvalue $0$ of $\widehat{L}(0)$ as $k \to 0$, are $C^2$ (as are the associated eigenvectors) and admit the expansion
\begin{align*}
\lambda_{c,\pm}(k) = \lambda_\pm^{(1)} k^2 + \mathcal{O}(k^3), \qquad k \in (-k_0,k_0),
\end{align*}
taking $k_0 > 0$ smaller if necessary. By assertion i.~and the fact that $\lambda_{c,\pm}(k)$ are the eigenvalues of $\Lambda_c(k)$, it follows that $\lambda_{c,\pm}''(0)$ are the eigenvalues of $\Lambda_c''(0)$. Since $\lambda_{c,\pm}''(0) = 2\lambda_\pm^{(1)}$ are strictly negative, the third assertion follows, which completes the proof.
\end{proof}

\begin{remark} \label{rem:spectralStabilitygGL}
{\rm
The spectral stability condition~\eqref{e:spec} used in Lemmas~\ref{lem:spec_cont} and~\ref{lem:specl} can be formally derived. Indeed, inserting the formally obtained expansion~\eqref{r:exp} for $r$ in the equations for $ \phi $ and $ B $ yields
\begin{align*}
\partial_T \phi & =  \left(1- \frac{2q^2}{1-q^2} \right)\partial_x^2 \phi +   \frac{q}{1-q^2} \partial_x  B + \ldots , \\
\partial_T B & =  (D+\gamma)  \partial_x^2 B - 2 \gamma  q  \partial_x^3 \phi  + \ldots.
\end{align*}
By making a Fourier ansatz we obtain the following spectral stability criteria
\begin{equation*}
\left(1- \frac{2q^2}{1-q^2} \right)(D+\gamma)  +  \frac{2 \gamma q^2}{1-q^2}  > 0,  \qquad 1- \frac{2q^2}{1-q^2} > 0,
\end{equation*}
which are equivalent to~\eqref{e:spec}. We note that in case $ \gamma = 0 $ both conditions reduce to the Eckhaus criterion $ q^2 <  1/3 $.
}\end{remark}

\subsection{Semigroup decomposition and associated estimates} \label{sec:semdecomp}

We decompose the semigroup $\re^{tL}$ generated by the sectorial operator $L$ on $C_{\mathrm{ub}}(\R,\R^3)$ in a diffusive part and an exponentially damped part. First, we note that the temporal Green's function associated with $L$ is given by
\begin{align*} \curlG(z,t) = \int_\R \re^{t\widehat{L}(k)} \re^{\ri kz} \de k.\end{align*}
Next, we recall that in a neighborhood of $k = 0$ the Fourier symbol $\widehat{L}(k)$ of $L$ admits a block diagonalization~\eqref{block} by Lemma~\ref{lem:spec_cont}. We introduce associated mode filters
$$P_c(k) = \chi(k) S(k)^{-1} \begin{pmatrix} I_{2 \times 2} & 0_{2 \times 1} \\ 0_{1 \times 2} & 0 \end{pmatrix} S(k),\qquad
P_s(k) = \chi(k) S(k)^{-1} \begin{pmatrix} 0_{2 \times 2} & 0_{2 \times 1} \\ 0_{1 \times 2} & 1 \end{pmatrix} S(k),$$
where $\chi \colon \R \to \R$ is a smooth cut-off function, whose closed support lies inside $(-k_0,k_0)$ and satisfies $\chi(k) = 1$ for $|k| \leq k_0/2$. This then leads to the Green's function decomposition
\begin{align*} \curlG(z,t) = \curlG_c(z,t) + \curlG_e(z,t),\end{align*}
with
\begin{align} \label{eq:greenG}
\begin{split}
\curlG_c(z,t) &= \int_\R \re^{t\widehat{L}(k)} P_c(k) \re^{\ri kz} \de k,\\
\curlG_e(z,t) &= \int_\R \re^{t\widehat{L}(k)} P_s(k) \re^{\ri kz} \de k + \int_\R \re^{t\widehat{L}(k)} (1-\chi(k)) \re^{\ri kz} \de k.
\end{split}
\end{align}
The semigroup $\re^{tL}$ decomposes accordingly
\begin{align}\re^{tL} = S_c(t) + S_e(t),\label{eq:semdecomp}\end{align}
where we denote
\begin{align} \left(S_j(t)f\right)(x) = \int_\R \curlG_j(x-y,t) f(y) \de y, \qquad j = c,e. \label{eq:semg}\end{align}

First, we obtain estimates on the diffusive part $S_c(t)$ of the semigroup $\re^{tL}$.

\begin{lemma}[Diffusive semigroup estimate] \label{lem:difest}
Let $q,\gamma \in \R$ and $D > 0$ satisfy~\eqref{e:spec}. Let $m,n \in \N_0$ and $p \in [1,\infty]$. Then, there exists a constant $C_{m,n}>0$ such that the estimate
\begin{align*}
\norm[L^\infty]{\partial_x^n S_c(t) \partial_x^m f} &\leq C_{m,n} \left(1+t\right)^{-\frac{n+m}{2} - \frac{1}{2p}} \left(\frac{\norm[L^p \cap L^\infty]{f_1}}{1+t} + \norm[L^p \cap L^\infty]{f_2} + \norm[L^p \cap L^\infty]{f_3}\right),
\end{align*}
holds for all $t \geq 0$, $f = (f_1,f_2,f_3) \in C_{\mathrm{ub}}^m(\R,\R^3) \cap L^p(\R,\R^3)$. Moreover, the estimates
\begin{align}
\norm[L^\infty]{S_c(t) \partial_x^m \begin{pmatrix} g \\ 0 \\ 0\end{pmatrix}} &\lesssim (1+t)^{-\frac{m+1}{2}}\norm[L^\infty]{\partial_x g}, \label{eq:refined1} \\
\norm[L^\infty]{S_c(t) \begin{pmatrix} -h \\ 0 \\ \gamma \partial_x^2 h\end{pmatrix}} &\lesssim q\frac{\norm[L^\infty]{h}}{1+t} + \frac{\norm[L^\infty]{h}}{(1+t)^2},\label{eq:refined2}
\end{align}
are satisfied for all $t \geq 0$, $g \in C_{\mathrm{ub}}^{\max\{1,m\}}(\R)$ and $h \in C_{\mathrm{ub}}^2(\R)$.
\end{lemma}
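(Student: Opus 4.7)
The plan is to convert $S_c(t)$ into a convolution operator via its Fourier representation and to extract decay from the Gaussian-like structure of the multiplier near $k=0$. The block diagonalization of Lemma~\ref{lem:specl} yields on the support of $\chi$
\[
\hat{\curlG}_c(k,t) = \chi(k)\,S(k)^{-1}\operatorname{diag}\!\bigl(\re^{t\Lambda_c(k)},0\bigr)S(k),
\]
which is smooth, compactly supported in $k$, and analytic in $k^2$. Since $\Lambda_c''(0)$ has stable spectrum by Lemma~\ref{lem:specl}(iii), shrinking $k_0$ if necessary yields a uniform bound $\|\re^{t\Lambda_c(k)}\| \lesssim \re^{-\mu k^2 t}$ for $|k|\le k_0$ and some $\mu>0$. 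Thus every scalar entry of $\hat{\curlG}_c(\cdot,t)$ behaves like a Gaussian-type multiplier of effective width $t^{-1/2}$ for $t\gtrsim 1$, and is smoothly truncated in $k$ for $t\in[0,1]$.

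The general estimate then follows by moving all $m+n$ derivatives onto the kernel, applying Young's inequality with $1/p+1/p'=1$, and invoking the convolution estimates packaged in Appendix~\ref{app:aux} to bound $\|\partial_z^\ell K_{ij}(\cdot,t)\|_{L^{p'}} \lesssim (1+t)^{-\ell/2-1/(2p)}$ for each scalar entry $K_{ij}$ of $\curlG_c$. The additional $(1+t)^{-1}$ factor attached to $\|f_1\|$ is the key structural gain and originates from the identity $P_c(0)e_1=(I-\curlP(0))e_1=0$, which is immediate from~\eqref{e:specproj}. Analyticity in $k^2$ then forces $\hat{\curlG}_c(k,t)e_1 = k^2\,M(k,t)$ with $M$ enjoying the same Gaussian-type bounds, contributing two extra factors of $(1+t)^{-1/2}$ in the kernel.

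The two refined estimates are specializations of this mechanism. For~\eqref{eq:refined1}, I would transfer one Fourier factor $ik$ from $\hat{\curlG}_c(k,t)e_1$ onto $g$, so that $S_c(t)(g,0,0)^\top$ becomes convolution with an $L^1$-kernel of size $(1+t)^{-1/2}$; further $x$-derivatives each add $(1+t)^{-1/2}$. For~\eqref{eq:refined2} write $S_c(t)(-h,0,\gamma\partial_x^2 h)^\top = -\check K(\cdot,t)*h$ with multiplier
\[
K(k,t)\,:=\,\hat{\curlG}_c(k,t)(1,0,0)^\top + \gamma k^2\,\hat{\curlG}_c(k,t)(0,0,1)^\top.
\]
Since $P_c(0)e_1=0$ and the second summand already carries $k^2$, analyticity in $k^2$ yields $K(k,t)=\mathcal{O}(k^2)$ unconditionally, which produces the $(1+t)^{-1}$ contribution. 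The improvement to $(1+t)^{-2}$ at $q=0$ requires the coefficient of $k^2$ in $K(k,t)$ to vanish there; I would verify this by expanding $P_c(k)(1,0,\gamma k^2)^\top = k^2\bigl(\tfrac{1}{2}P_c''(0)e_1+\gamma P_c(0)e_3\bigr)+\mathcal{O}(k^4)$ and substituting $\curlP(0)$ and $\curlP''(0)$ from~\eqref{e:specproj}; the $\gamma$-pieces cancel and the residue reduces to $q$ times a uniformly bounded vector, so that $K(k,t)=q\,k^2 A(k,t)+k^4 B(k,t)$ with $A,B$ of Gaussian type.

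The principal technical obstacle is this last identification together with the associated $L^1$-bounds on kernels whose Fourier multipliers vanish to prescribed orders at $k=0$. The former is a direct algebraic check using Lemma~\ref{lem:specl} once the expansion is in place; the latter reduces to the standard scaling that the IFT of $k^{2\ell}\phi(k,t)$, with $\phi$ Gaussian-bounded, has $L^1$-norm $\lesssim (1+t)^{-\ell}$, which is precisely what the convolution lemmas of Appendix~\ref{app:aux} systematize. A minor additional care point is handling short times $t\in[0,1]$, where the decay factors are replaced by uniform constants and the estimates follow from smoothness and compact support of $\chi$.
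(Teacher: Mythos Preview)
Your proposal is correct and follows essentially the same route as the paper: the extra $(1+t)^{-1}$ on $f_1$ comes from $(I-\mathcal{P}(0))e_1=0$, estimate~\eqref{eq:refined1} from transferring one Fourier factor $ik$ onto $g$, and~\eqref{eq:refined2} from the algebraic fact that the $k^2$-coefficient of $(I-\mathcal{P}(k))(1,0,\gamma k^2)^\top$ is $q$-proportional---your computation of $\tfrac12 P_c''(0)e_1+\gamma P_c(0)e_3$ reproduces exactly the paper's identity~\eqref{eq:specid}. The one place where the paper is sharper than your sketch is in making ``$M(k,t)$ enjoys the same Gaussian-type bounds'' rigorous: rather than dividing $\hat{\mathcal G}_c(k,t)e_1$ by $k^2$ directly (which mixes the $t$-dependence into the remainder), the paper inserts the projection identity $P_c(k)=P_c(k)(I-\mathcal{P}(k))$ so that the $t$-dependent factor $\re^{t\hat L(k)}P_c(k)$ is left untouched and only the \emph{$t$-independent} weight $\varpi(k)$ is replaced by $\chi(k)\frac{\mathcal{P}(0)-\mathcal{P}(k)}{k^2}$ (respectively $\chi(k)\frac{\mathcal{P}(0)+\frac{k^2}{2}\mathcal{P}''(0)-\mathcal{P}(k)}{k^4}$ for the $k^4$-piece), which then feeds directly into Lemma~\ref{lem:semigroupEstimate1} without further argument.
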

\begin{proof}
Let $t \geq 0$, $f \in C_{\mathrm{ub}}^m(\R,\R^3) \cap L^p(\R,\R^3)$, $g \in C_{\mathrm{ub}}^{\max\{1,m\}}(\R)$ and $h \in C_{\mathrm{ub}}^2(\R)$. We first note that by Lemma~\ref{lem:specl} the spectral projection $\mathcal{P}(k)$, defined in~\eqref{def:specproj}, is analytic in $k^2$ and satisfies
\begin{align} \label{eq:specid} \left(I_{3 \times 3} - \mathcal{P}(0)\right) \begin{pmatrix} 1 \\ 0 \\ \gamma k^2\end{pmatrix} - \frac{k^2}{2}\mathcal{P}''(0) \begin{pmatrix} 1 \\ 0 \\ 0\end{pmatrix} = k^2\begin{pmatrix} \frac{q^2}{\left(1 - q^2\right)^2} \\ -\frac{q}{1 - q^2} \\ 0\end{pmatrix},\end{align}
for any $k \in \R$. Integrating by parts and using~\eqref{eq:specid} at $k = 0$ yields
\begin{align} \label{eq:intids}
\begin{split}
\partial_x^n S_c(t) \partial_x^m f &= \ri^{n+m} \int_\R\int_\R \re^{t\widehat{L}(k)} P_c(k) k^{n+m} \re^{\ri k(x-y)} \de k f(y) \de y\\
&= \ri^{n+m} \int_\R\int_\R \re^{t\widehat{L}(k)} P_c(k) \left(I_{3 \times 3} - \mathcal{P}(k)\right) k^{n+m} \re^{\ri k(x-y)} \de k f(y) \de y\\
&= \ri^{n+m} \int_\R\int_\R \re^{t\widehat{L}(k)} P_c(k) \left(I_{3 \times 3} - \mathcal{P}(0)\right) k^{n+m} \re^{\ri k(x-y)} \de k \begin{pmatrix} 0 \\ f_2(y) \\ f_3(y)\end{pmatrix} \de y\\
&\qquad +\, \ri^{n+m} \int_\R\int_\R \re^{t\widehat{L}(k)} P_c(k) \frac{\mathcal{P}(0) - \mathcal{P}(k)}{k^2} k^{n+m+2} \re^{\ri k(x-y)} \de k f(y) \de y,
\end{split}
\end{align}
The first estimate now follows directly by bounding the latter two integrals with the aid of Lemmas~\ref{lem:semigroupEstimate1}, whose assumptions are satisfied by Lemma~\ref{lem:specl}, where for the first integral we take $\varpi(k) = \chi(k)\left(I_{3 \times 3} - \mathcal{P}(0)\right)$ and for the second integral we take the smooth function $\varpi(k) = \chi(k) \frac{\mathcal{P}(0) - \mathcal{P}(k)}{k^2}$.

For the second estimate~\eqref{eq:refined1} we substitute $f = (g,0,0)$ and $n = 0$ in the chain of equalities~\eqref{eq:intids} and integrate by parts once again. We obtain
\begin{align*}
S_c(t) \partial_x^m \begin{pmatrix} g \\ 0 \\ 0\end{pmatrix} &= \ri^{m-1}\int_\R\int_\R \re^{t\widehat{L}(k)} P_c(k) \frac{\mathcal{P}(0) - \mathcal{P}(k)}{k^2} k^{m+1} \re^{\ri k(x-y)} \de k \begin{pmatrix} \partial_y g(y) \\ 0 \\ 0\end{pmatrix} \de y.
\end{align*}
As above we apply Lemma~\ref{lem:semigroupEstimate1} to the right-hand side of the latter identity, which then yields estimate~\eqref{eq:refined1}.

Finally, we consider~\eqref{eq:intids} with $f = (0,0,-\gamma h)$, $n = 0$ and $m = 2$ and with $f = (h,0,0)$ and $n = m = 0$. We then add the resulting identities and use~\eqref{eq:specid} to arrive at
\begin{align*}
S_c(t) \begin{pmatrix} h \\ 0 \\ -\gamma \partial_x^2 h\end{pmatrix} &= \int_\R\int_\R \re^{t\widehat{L}(k)} P_c(k) \begin{pmatrix} \frac{q^2}{\left(1 - q^2\right)^2} \\ -\frac{q}{1 - q^2} \\ 0\end{pmatrix} k^2 \re^{\ri k(x-y)} \de k h(y) \de y\\
&\qquad + \, \int_\R\int_\R \re^{t\widehat{L}(k)} P_c(k) \frac{\mathcal{P}(0) - \mathcal{P}(k)}{k^2} k^4 \re^{\ri k(x-y)} \de k \begin{pmatrix} 0 \\ 0 \\ \gamma h(y)\end{pmatrix} \de y,\\
&\qquad + \, \int_\R\int_\R \re^{t\widehat{L}(k)} P_c(k) \frac{\mathcal{P}(0) + \frac{k^2}{2}\mathcal{P}''(0) - \mathcal{P}(k)}{k^4} k^4 \re^{\ri k(x-y)} \de k \begin{pmatrix} h(y) \\ 0 \\ 0\end{pmatrix} \de y,
\end{align*}
which yields the last estimate~\eqref{eq:refined2} by applying Lemma~\ref{lem:semigroupEstimate1} again.
\end{proof}

Subsequently, we establish exponential decay for the residual part $S_e(t)$ of the semigroup $\re^{tL}$.

\begin{lemma}[Exponential semigroup estimate] \label{lem:expest}
Let $q,\gamma \in \R$ and $D > 0$ satisfy~\eqref{e:spec}. Let $n,m \in \N_0$ with $n + m \leq 1$. Then, there exists $\mu_0 > 0$ such that the estimate
\begin{align}
\norm[L^\infty]{\partial_x^n S_e(t) \partial_x^m f} \lesssim \left(1+t^{-\frac{n+m}{2}}\right) \re^{-\mu_0 t} \norm[L^\infty]{f}, \label{e:semest2}
\end{align}
holds for all $t > 0$ and $f \in C_{\mathrm{ub}}^m(\R,\R^3)$.
\end{lemma}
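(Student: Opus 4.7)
The plan is to split $S_e(t)$ according to the two terms in the Green's function decomposition~\eqref{eq:greenG}: a low-frequency stable piece $S_e^{\mathrm{s}}(t)$ with Fourier symbol $\chi(k)\re^{t\hat{L}(k)}P_s(k)$, and a high-frequency piece $S_e^{\mathrm{h}}(t)$ with symbol $(1-\chi(k))\re^{t\hat{L}(k)}$. Each is a convolution operator, so by Young's inequality it suffices to control the $L^1$-norm of the associated kernel, including the factor $(\ri k)^{n+m}$ arising from the outer derivatives $\partial_x^n$ and $\partial_x^m$.

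For $S_e^{\mathrm{s}}(t)$ I would use the block diagonalisation from Lemma~\ref{lem:specl} to write $\re^{t\hat{L}(k)}P_s(k) = \re^{t\lambda_s(k)}\mathcal{P}(k)$ on $\mathrm{supp}(\chi)$. Lemma~\ref{lem:specl}(iv), continuity of $\lambda_s$, and compactness of $\mathrm{supp}(\chi) \subset [-k_0,k_0]$ yield $\mu_1 > 0$ with $\Re(\lambda_s(k)) \leq -\mu_1$ throughout $\mathrm{supp}(\chi)$. The symbol $(\ri k)^{n+m}\chi(k)\re^{t\lambda_s(k)}\mathcal{P}(k)$ is then a smooth, compactly supported, matrix-valued function of $k$ whose $k$-derivatives of every order are bounded uniformly in $t \geq 0$ by a constant times $\re^{-\mu_1 t}$; its inverse Fourier transform is accordingly a Schwartz function of $z$ with $L^1$-norm bounded by $C\re^{-\mu_1 t}$. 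This delivers~\eqref{e:semest2} for $S_e^{\mathrm{s}}(t)$ with no singular behavior as $t \to 0^+$.

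The main work lies in $S_e^{\mathrm{h}}(t)$. Lemma~\ref{lem:spec_cont} gives $\sup\Re\sigma(\hat{L}(k)) < 0$ for every $k \in \R \setminus \{0\}$ together with $\sup\Re\sigma(L_\infty) < 0$ for the high-frequency limit $L_\infty$. A continuity and compactness argument on the one-point compactification of $\{|k| \geq k_0/2\}$, combined with standard matrix-exponential bounds in terms of spectral gap and operator norm (using that $\hat{L}(k)$ is $3\times 3$), produces constants $\mu_0, C > 0$ with
\begin{align*}
\|\re^{t\hat{L}(k)}\| \leq C\re^{-2\mu_0(1+k^2)t}, \qquad |k| \geq k_0/2,\ t \geq 0,
\end{align*}
after absorbing polynomial-in-$(1+k^2)t$ Jordan prefactors into a slight weakening of the exponential rate. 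To estimate the convolution kernel of the Fourier multiplier $(\ri k)^{n+m}(1-\chi(k))\re^{t\hat{L}(k)}$ in $L^1$, I would apply $\norm[L^1]{g} \lesssim \norm[L^2]{g} + \norm[L^2]{zg}$ together with Plancherel, reducing matters to weighted $L^2_k$-bounds on the symbol and its first $k$-derivative. The latter brings in $t\hat{L}'(k)$ with $\|\hat{L}'(k)\| \lesssim 1+|k|$, and a Gaussian rescaling $u = k\sqrt{t}$ converts each factor $|k|^\alpha$ in the integrand into a contribution of order $t^{-\alpha/2-1/4}$ to the $L^2_k$-norm. Summing the contributions, using $n+m \leq 1$ to keep the $|k|$-powers in a safe range, and once more absorbing polynomial-in-$t$ prefactors into the exponential decay, one arrives at a kernel $L^1$-bound of order $(1+t^{-(n+m)/2})\re^{-\mu_0 t}$, whence~\eqref{e:semest2} follows by Young's inequality.

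The main obstacle is the bookkeeping: verifying that the polynomial prefactors arising both from the Jordan structure of $\hat{L}(k)$ (uniformly in $|k| \geq k_0/2$) and from differentiating the Fourier symbol in $k$ can all be absorbed into the exponential decay by slightly shrinking $\mu_0$ at each step. The restriction $n+m \leq 1$ keeps the weighted $L^2_k$-norms comfortably integrable in $k$; the scheme itself extends to larger $n+m$ at the cost of additional smoothness assumptions on $f$.
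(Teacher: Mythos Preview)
Your split into the compactly supported stable piece and the high-frequency piece is exactly the paper's starting point, and your treatment of $S_e^{\mathrm s}(t)$ is correct and matches the paper (which invokes Lemma~\ref{lem:semigroupEstimate1} for this term).

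There is, however, a genuine short-time gap in your treatment of $S_e^{\mathrm h}(t)$. The inequality $\|g\|_{L^1}\lesssim\|g\|_{L^2}+\|zg\|_{L^2}$ is not adapted to the diffusive length scale $\sqrt t$, and the cutoff derivative contributes a term $-\chi'(k)\,\re^{t\hat L(k)}$ to $\partial_k$ of the symbol whose $L^2_k$-norm is $O(1)$ rather than $O(t^{1/4})$. Carrying out your Gaussian rescaling one finds, e.g.\ for $n+m=0$, a kernel $L^1$-bound of order $t^{-1/4}$ as $t\downarrow 0$, not $O(1)$; for $n+m=1$ one gets $t^{-3/4}$ instead of $t^{-1/2}$. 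Polynomial prefactors cannot be ``absorbed into the exponential'' at $t\to 0$, only at $t\to\infty$.

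The paper avoids this by first modifying $\hat L$ on $[-k_0/2,k_0/2]$ to a symbol $\widetilde L\in C^2(\R,\C^{3\times3})$ with $\sup\Re\sigma(\widetilde L(k))<0$ for \emph{all} $k\in\R$, and then writing $(1-\chi)\re^{t\hat L}=\re^{t\widetilde L}-\chi\,\re^{t\widetilde L}$. The second piece is compactly supported in $k$ and is handled exactly like your $S_e^{\mathrm s}$ via Lemma~\ref{lem:semigroupEstimate1}. The first piece has \emph{no} cutoff, so one can use the $t$-scaled multiplier $(1+z^2/t)^{-1}$ together with \emph{two} integrations by parts in $k$ (Lemma~\ref{lem:semigroupEstimate3}); the resulting pointwise kernel bound $\lesssim t^{-1/2}(1+t^{-(n+m)/2})\re^{-\mu_0 t}(1+z^2/t)^{-1}$ integrates to exactly $(1+t^{-(n+m)/2})\re^{-\mu_0 t}$. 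A quicker patch for your argument is to treat $t\le 1$ separately via $S_e(t)=\re^{tL}-S_c(t)$ and the standard analytic-semigroup bound $\|\partial_x^{n+m}\re^{tL}\|_{L^\infty\to L^\infty}\lesssim t^{-(n+m)/2}$ together with the uniform bounds on $S_c(t)$ from Lemma~\ref{lem:difest}.
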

\begin{proof}
Recall that $\widehat{L}(k)$ is analytic in $k^2$ and satisfies
$$\sup \Re \sigma\left(\widehat{L}(k)\right) < 0, \qquad k \in \R \setminus \{0\},$$
by Lemma~\ref{lem:spec_cont}. Hence, using that the spectrum of $\widehat{L}(k)$ depends continuously on $k$, there exists $\smash{\widetilde{L}} \in C^2\big(\R,\C^{3 \times 3}\big)$ satisfying
\begin{align} \widetilde{L}(k) = \widehat{L}(k), \qquad k \in \R \setminus \left[-\frac{k_0}{2},\frac{k_0}{2}\right], \label{e:modprop1}\end{align}
and
\begin{align} \sup \Re \sigma\left(\widetilde{L}(k)\right) < 0, \qquad k \in \R. \label{e:modprop2}\end{align}

Now let $t > 0$ and $f \in C_{\mathrm{ub}}^m(\R,\R^3)$. Using integration by parts, identity~\eqref{e:modprop1} and the fact that $\chi(k) = 1$ for $k \in [-\frac{k_0}{2},\frac{k_0}{2}]$, we rewrite
\begin{align*} \partial_x^n S_e(t) \partial_x^m f &= \ri^{n+m} \left(\int_\R\int_\R \re^{t\widehat{L}(k)} P_s(k) k^{n+m} \re^{\ri k(x-y)} \de k f(y) \de y + \, \int_\R \int_\R \re^{t\widetilde{L}(k)} k^{n+m} \re^{\ri k(x-y)} \de k f(y) \de y\right.\\
&\qquad \qquad \qquad \left. - \, \int_\R\int_\R \re^{t\widetilde{L}(k)} \chi(k) k^{n+m} \re^{\ri k(x-y)} \de k f(y) \de y\right),
\end{align*}
as a sum of three integrals. To bound the first integral, we observe that by Lemma~\ref{lem:specl} the assumptions of Lemma~\ref{lem:semigroupEstimate1} are satisfied. Therefore, there exists $\mu_1 > 0$ such that this integral can be bounded as $\lesssim \re^{-\mu_1 t}\norm[L^\infty]{f}$. For the second integral we note that the assumptions of Lemma~\ref{lem:semigroupEstimate3} are satisfied, where we use Lemma~\ref{lem:spec_cont}, the fact that $\widehat{L}$ is quadratic in $k$, and the identities~\eqref{e:modprop1} and~\eqref{e:modprop2}. Hence, there exists $\mu_2 > 0$ such that the second integral can be bounded as $\lesssim (1+t^{-\frac{m+n}{2}}) \re^{-\mu_2 t} \norm[L^\infty]{f}$. Finally, to bound the third integral, we apply Lemma~\ref{lem:semigroupEstimate1} again with $j = 0$ and $S(k) = I_{3 \times 3}$. Thus, using~\eqref{e:modprop2}, Lemma~\ref{lem:semigroupEstimate1} yields $\mu_3 > 0$ such that the third integral can be bounded as $\lesssim \re^{-\mu_3 t} \norm[L^\infty]{f}$. This completes the proof.
\end{proof}

\subsection{Proof of Theorem~\ref{thm:explong}} \label{sec:explong}

Having estimated the diffusive and exponentially damped part of the semigroup generated by the linearization $L$ of~\eqref{eq:pertModGLAbbreviation} on $C_{\mathrm{ub}}(\R,\R^3)$, we are now able to prove Theorem~\ref{thm:explong}. 

\begin{proof}[Proof of Theorem~\ref{thm:explong}]
By Proposition~\ref{prop:local} there exist a maximal time $T \in (0,\infty]$ and a unique mild solution $W \in C\left([0,T),C_{\mathrm{ub}}^1(\R,\R^5)\right)$ of~\eqref{eq:modGL4} with initial condition $W(0) = (r_0,\partial_x r_0,\phi_0,\partial_x \phi_0,B_0) \in C_{\mathrm{ub}}^1(\R,\R^5)$. If we have $T < \infty$, then it holds
\begin{align*} \sup_{t \uparrow T} \norm[W^{1,\infty}]{W(t)} = \infty.\end{align*}
Write $W(t) = (r(t),w(t),\phi(t),\psi(t),B(t))$ and define $V \in C\big([0,T),C_{\mathrm{ub}}^2(\R) \times C_{\mathrm{ub}}^1(\R) \times C_{\mathrm{ub}}^1(\R)\big)$ by $V(t) = (r(t),\psi(t),B(t))$. Then, $V(t)$ is a mild solution of~\eqref{eq:pertModGLAbbreviation} with $V(0) = V_0 := (r_0,\partial_x \phi_0,B_0)$. We define the template function $\eta \colon [0,T) \to \R$ by $\eta(t) = \eta_1(t) + \eta_2(t)$, where we denote
\begin{align*} \eta_1(t) &= \sup_{0 \leq s \leq t} \left(\norm[L^\infty]{V(s)} + \sqrt{1+s} \norm[L^\infty]{\partial_x V(s)}\right), \\
\eta_2(t) &= \sup_{0 \leq s \leq t} \left(\sqrt{1+s} \norm[L^\infty]{\partial_x^2 r(s)} + \frac{\norm[L^\infty]{\phi(s)}}{\sqrt{1+s}}\right).\end{align*}
Clearly, $\eta$ is continuous and, if $T < \infty$, then it holds
\begin{align} \lim_{t \uparrow T} \eta(t) = \infty. \label{e:blowupeta}\end{align}

Our goal is to prove that there exists a constant $C > 1$ such that for all $t \in [0,T)$ with $\eta(t) \leq 1$ we have
\begin{align}
\eta(t) \leq C\left(\epsilon + \eta(t)\left(\eta(t) + \epsilon\right) \ln(2+t)\right). \label{e:etaest}
\end{align}
Then, taking
$$\epsilon_0 = \frac{1}{4C^2 + 2C}, \qquad M_0 = 2C,$$
it follows by the continuity, monotonicity and non-negativity of $\eta$ that, provided $\epsilon \in (0,\epsilon_0)$, we have $\eta(t) \leq M_0\epsilon = 2C\epsilon \leq 1$ for all $t \in [0,T) \cap \big[0,\re^{\epsilon_0/\epsilon}-2\big]$. Indeed, given $t \in [0,T) \cap \big[0,\re^{\epsilon_0/\epsilon}-2\big]$ with $\eta(s) \leq 2C\epsilon$ for each $s \in [0,t]$, we arrive at
$$\eta(t) \leq C\left(\epsilon + \left(4C^2 + 2C\right)\epsilon^2\ln(2+t)\right) < 2C\epsilon,$$
by estimate~\eqref{e:etaest} and the fact that $\left(4C^2 + 2C\right)\epsilon\ln(2+t) \leq 1$. Thus, if~\eqref{e:etaest} is satisfied, then we have $\eta(t) \leq 2C\epsilon$, for all $t \in [0,T) \cap \big[0,\re^{\epsilon_0/\epsilon}-2\big]$, which implies by~\eqref{e:blowupeta} that it must hold $T > \re^{\epsilon_0/\epsilon}-2$. Consequently, $\eta(t) \leq M_0\epsilon$ is satisfied for all $t \in \big[0,\re^{\epsilon_0/\epsilon}-2\big]$, which proves the result.

It remains to prove the key estimate~\eqref{e:etaest}. To this end, assume that $t \in [0,T)$ is such that $\eta(t) \leq 1$. We start by bounding $V(t)$ and its derivative. Since $V \in C\big([0,T),C_{\mathrm{ub}}^1(\R,\R^3)\big)$ is a mild solution to~\eqref{eq:pertModGLAbbreviation}, it satisfies the Duhamel formulation
\begin{align}
\partial_x^m V(t) = \re^{t L} \partial_x^m V_0 + \int_0^t \re^{(t-s) L} \partial_x^m \left(N_1(V(s)) + \partial_x N_2(V(s))\right) \de s. \label{duhamelV}
\end{align}
for $m = 0,1$. The nonlinearities obey the estimates
\begin{align}
\begin{split}
\norm[L^\infty]{N_1(V(s))} &\lesssim \norm[W^{1,\infty}]{V(s)}^2 \lesssim \eta_1(t)^2,\\
\norm[L^\infty]{N_2(V(s))} &\lesssim \norm[L^\infty]{V(s)} \norm[L^\infty]{\partial_x V(s)} \lesssim \frac{\eta_1(t)^2}{\sqrt{1+s}},\\
\norm[L^\infty]{\partial_x N_j(V(s))} &\lesssim \norm[W^{1,\infty}]{V(s)}\left(\norm[L^\infty]{\partial_x V(s)} + \norm[L^\infty]{\partial_x^2 r(s)}\right) \lesssim \frac{\eta_1(t)\left(\eta_1(t) + \eta_2(t)\right)}{\sqrt{1+s}},
\end{split} \label{eq:nonl1}
\end{align}
for $j = 1,2$ and all $s \in [0,t]$, where we use $\eta(t) \leq 1$. Thus, recalling the semigroup decomposition~\eqref{eq:semdecomp}, applying Lemmas~\ref{lem:difest} and~\ref{lem:expest} to~\eqref{duhamelV}, using~\eqref{eq:nonl1} and exploiting that the nonlinearity $N_1(V)$ is of the form $(\ast,0,0)^\top$, we obtain
\begin{align*}
\norm[L^\infty]{V(t)} &\lesssim \norm[L^\infty]{V_0} + \int_0^t \frac{\norm[L^\infty]{\partial_x N_1(V(s))}}{\sqrt{1+t-s}} \de s + \int_0^t \frac{\norm[L^\infty]{N_2(V(s))}}{\sqrt{1+t-s}} \de s\\
& \qquad + \, \int_0^t \re^{-\mu_0 (t-s)}\left(\norm[L^\infty]{N_1(V(s))} + \frac{\norm[L^\infty]{N_2(V(s))}}{\sqrt{t-s}}\right) \de s\\
&\lesssim \norm[L^\infty]{V_0} + \int_0^t \frac{\eta_1(t)\left(\eta_1(t) + \eta_2(t)\right)}{\sqrt{1+t-s}\sqrt{1+s}} \de s + \int_0^t \frac{\eta_1(t)^2}{\re^{\mu_0 (t-s)}} \left(1 + \frac{1}{\sqrt{1+s}\sqrt{t-s}}\right) \de s\\
&\lesssim \epsilon + \eta_1(t)\left(\eta_1(t) + \eta_2(t)\right),
\end{align*}
and
\begin{align*}
\norm[L^\infty]{\partial_x V(t)} &\lesssim \frac{\norm[W^{1,\infty}]{V_0}}{\sqrt{1+t}} + \int_0^t \frac{\norm[L^\infty]{\partial_x N_1(V(s))}}{1+t-s} \de s + \int_0^t \frac{\norm[L^\infty]{N_2(V(s))}}{1+t-s} \de s\\
& \qquad + \, \int_0^t \re^{-\mu_0 (t-s)}\left(\norm[L^\infty]{\partial_x N_1(V(s))} + \frac{\norm[L^\infty]{\partial_x N_2(V(s))}}{\sqrt{t-s}}\right) \de s\\
&\lesssim \frac{\norm[W^{1,\infty}]{V_0}}{\sqrt{1+t}} + \int_0^t \frac{\eta_1(t)\left(\eta_1(t) + \eta_2(t)\right)}{(1+t-s)\sqrt{1+s}} \de s + \int_0^t \frac{\eta_1(t)\left(\eta_1(t) + \eta_2(t)\right)}{\re^{\mu_0 (t-s)}\sqrt{1+s}} \left(1 + \frac{1}{\sqrt{t-s}}\right) \de s\\
&\lesssim \frac{\epsilon + \eta_1(t)\left(\eta_1(t) + \eta_2(t)\right)\ln(2+t)}{\sqrt{1+t}}.
\end{align*}
Combining the latter two estimates we arrive at a constant $C_1 > 1$ such that
\begin{align}
\eta_1(t) \leq C_1\left(\epsilon + \eta_1(t)\left(\eta_1(t) + \eta_2(t)\right)\ln(2+t)\right). \label{e:etaest1}
\end{align}

Next we bound $\partial_x^2 r(t)$ and $\phi(t)$, which satisfy the Duhamel formulations
\begin{align}
\begin{split}
\partial_x^2 r(t) &= \re^{t L_0} \partial_x^2 r_0 + \int_0^t \re^{(t-s) L_0} \partial_x \mathcal{N}_1(s) \de s,\\
\phi(t) &= \re^{t \partial_x^2} \phi_0 + \int_0^t \re^{(t-s) \partial_x^2} \mathcal{N}_2(s) \de s,
\end{split} \label{duhamelphir}
\end{align}
where we denote $L_0 = \partial_x^2 - 2(1-q^2)$ and
\begin{align*}
\mathcal{N}_1(s) &= \partial_x B(s) - 2q\partial_x\psi(s) + \partial_x \left(\partial_x r(s)\right)^2 - \partial_x\left(\psi(s)^2\right) - \left(1-q^2\right)\partial_x \left(\re^{2r(s)} - 1 - 2r(s)\right),\\
\mathcal{N}_2(s) &= 2q\partial_x r(s) + 2\psi(s)\left(\partial_x r(s)\right).
\end{align*}
We establish the estimates
\begin{align}
\begin{split}
\norm[L^\infty]{\mathcal{N}_j(s)} &\lesssim \norm[L^\infty]{\partial_x V(s)} \lesssim \frac{\eta_1(t)}{\sqrt{1+s}},\\
\end{split} \label{eq:nonl2}
\end{align}
for $j = 1,2$ and $s \in [0,t]$, where we use $\eta(t) \leq 1$. Thus, applying the standard semigroup estimates
\begin{align} \label{e:estphir}
\norm[L^\infty \to L^\infty]{\re^{z \partial_x^2}\partial_x^m } \lesssim z^{-\frac{m}{2}}, \qquad \norm[L^\infty \to L^\infty]{\re^{z L_0}\partial_x^m} \lesssim z^{-\frac{m}{2}}\re^{-2\left(1-q^2\right)z},
\end{align}
for $m = 0,1$ and $z > 0$, cf.~Lemmas~\ref{lem:semigroupEstimate1} and~\ref{lem:semigroupEstimate3}, and using the nonlinear estimate~\eqref{eq:nonl2}, we bound~\eqref{duhamelphir} as
\begin{align}
\label{e:rest}
\norm[L^\infty]{\partial_x^2 r(t)} \lesssim \re^{-2\left(1-q^2\right) t} \norm[W^{2,\infty}]{r_0} + \int_0^t \frac{\re^{-2\left(1-q^2\right)(t-s)} \eta_1(t)}{\sqrt{t-s}\sqrt{1+s}} \de s \lesssim \frac{\epsilon + \eta_1(t)}{\sqrt{1+t}},
\end{align}
and
\begin{align*}
\norm[L^\infty]{\phi(t)} &\lesssim \norm[L^\infty]{\phi_0} + \int_0^t \frac{\eta_1(t)}{\sqrt{1+s}} \de s \lesssim \left(\epsilon + \eta_1(t)\right)\sqrt{1+t}.
\end{align*}
Combining the latter two estimates yields a constant $C_2 > 1$ such that
\begin{align}
\eta_2(t) \leq C_2\left(\epsilon + \eta_1(t)\right). \label{e:etaest2}
\end{align}

Substituting~\eqref{e:etaest2} into~\eqref{e:etaest1} yields
\begin{align}
\eta_1(t) \leq 2C_1C_2\left(\epsilon + \eta_1(t)\left(\eta_1(t) + \epsilon\right)\ln(2+t)\right), \label{e:etaest3}
\end{align}
Subsequently, we employ~\eqref{e:etaest2} and~\eqref{e:etaest3} to establish
\begin{align*}
\eta(t) &= \eta_1(t) + \eta_2(t) \leq \eta_1(t) + C_2(\epsilon + \eta_1(t)) \leq 2C_2\left(\epsilon + \eta_1(t)\right) \\
&\leq 6C_1C_2^2\left(\epsilon + \eta(t)\left(\eta(t) + \epsilon\right)\ln(2+t)\right),
\end{align*}
which proves the key inequality~\eqref{e:etaest} by taking $C = 6C_1C_2^2$.
\end{proof}

\subsection{Proof of Theorem~\ref{thm:partloc}} \label{sec:partloc}

Requiring the $B$-component to be slightly localized yields additional decay on the linear level, which allows us to extend the statement in Theorem~\ref{thm:explong} for exponentially long times to the global result in Theorem~\ref{thm:partloc}.

\begin{proof}[Proof of Theorem~\ref{thm:partloc}]
	We proceed similar to the proof of Theorem~\ref{thm:explong} and only outline the differences.
	The template function $\eta(t)$ in the proof of Theorem~\ref{thm:explong} is adapted to reflect the improved decay rates. That is, we define $\eta : [0,T) \rightarrow \R$ by $\eta(t) = \eta_1(t) + \eta_2(t)$ with
	\begin{align*}
		\eta_1(t) &:= \sup_{0 \leq s \leq t} \left((1+s)^{\frac{1}{2p}} \norm[L^\infty]{V(s)} + (1+s)^{\frac{1}{2p}+\frac{1}{2}} \norm[L^\infty]{\partial_x V(s)}\right), \\
		\eta_2(t) &:= \sup_{0 \leq s \leq t} \left((1+s)^{\frac{1}{2p}-\frac{1}{2}} \norm[L^\infty]{\phi(s)} + (1+s)^{\frac{1}{2p}+\frac{1}{2}} \norm[L^\infty]{\partial_x^2 r(s)}\right).
	\end{align*}
	Instead of establishing the key inequality~\eqref{e:etaest}, our goal is to prove that there exists a constant $C > 1$ such that for all $t \in [0,T)$ with $\eta(t) \leq 1$ we have
	\begin{align}
		\eta(t) \leq C\left(\epsilon + \eta(t)\left(\eta(t) + \epsilon\right)\right). \label{e:etaestLoc}
	\end{align}
	Then, taking $M_0 = 2C$ and $\epsilon_0 = (4C^2 + 2C)^{-1}$, it follows by the continuity, monotonicity and non-negativity of $\eta$ that, provided $\epsilon \in (0,\epsilon_0)$, we have $\eta(t) \leq M_0\epsilon = 2C\epsilon \leq 1$ for all $t \in [0,T)$, which implies that~\eqref{e:blowupeta} cannot hold and we must have $T = \infty$, yielding the result.
	
We now provide the estimates to establish the key inequality~\eqref{e:etaestLoc}. Therefore, we assume $t \in [0,T)$ is such that $\eta(t) \leq 1$. Again, since $V$ is a mild solution of~\eqref{eq:pertModGLAbbreviation}, it satisfies the Duhamel formulation~\eqref{duhamelV}. To estimate the linear part $\re^{tL} \partial_x^m V_0$ in~\eqref{duhamelV} we use $B_0 \in L^p(\R) \cap C_{\mathrm{ub}}^1(\R)$, exploit the semigroup decomposition~\eqref{eq:semdecomp} and apply Lemmas~\ref{lem:difest} and~\ref{lem:expest} to obtain
	\begin{equation}
		\label{eq:linEstLoc}
		\begin{aligned}
			&\norm[L^\infty]{\re^{tL} \partial_x^m V_0} \lesssim \norm[L^\infty]{S_c(t) \partial_x^m \begin{pmatrix} r_0 \\ \partial_x \phi_0 \\ 0\end{pmatrix}} + \norm[L^\infty]{S_c(t) \partial_x^m  \begin{pmatrix} 0 \\ 0 \\ B_0\end{pmatrix}} + \norm[L^\infty]{S_e(t) \partial_x^m V_0} \\
			&\qquad\lesssim (1+t)^{-\frac{m}{2}}\left(\frac{\norm[L^\infty]{r_0}}{1+t} + \frac{\norm[L^\infty]{\phi_0}}{\sqrt{1+t}} + (1+t)^{-\frac{1}{2p}} \norm[L^p \cap W^{1,\infty}]{B_0}\right) + \re^{-\mu_0 t} \norm[W^{1,\infty}]{V_0}\\
&\qquad\lesssim (1+t)^{-\frac{m}{2}-\frac{1}{2p}} \varepsilon,
		\end{aligned}
	\end{equation}
for $m = 0,1$.
	
	To estimate the nonlinear part in~\eqref{duhamelV} we observe, using~\eqref{eq:nonl1} and the improved decay rates encoded in $\eta(t)$, that the nonlinearities obey the refined estimates
	\begin{equation}
		\label{eq:nonlinEstLoc}
		\begin{aligned}
			\norm[L^\infty]{N_1(V(s))} &\lesssim (1+s)^{-\frac{1}{p}} \eta_1(t)^2, \\
			\norm[L^\infty]{N_2(V(s))} &\lesssim (1+s)^{-\frac{1}{p}-\frac{1}{2}} \eta_1(t)^2, \\
			\norm[L^\infty]{\partial_x N_j(V(s))} &\lesssim (1+s)^{-\frac{1}{p}-\frac{1}{2}} \eta_1(t)(\eta_1(t) + \eta_2(t)),
		\end{aligned}
	\end{equation}
	for $j = 1,2$ and all $s \in [0,t]$, where we use $\eta(t) \leq 1$. Then, exploiting the semigroup decomposition~\eqref{eq:semdecomp}, the fact that $N_1(V(s)) = (\ast,0,0)^\top$ and utilizing the estimates in Lemmas~\ref{lem:difest} and~\ref{lem:expest}, we obtain
	\begin{align*}
		&\norm[L^\infty]{\int_0^t \re^{(t-s)L} (N_1(V(s)) + \partial_x N_2(V(s))) \de s}\\
        &\qquad \lesssim \int_0^t \frac{\norm[L^\infty]{N_1(V(s))}}{1+t-s} \de s + \int_0^t \frac{\norm[L^\infty]{N_2(V(s))}}{\sqrt{1+t-s}} \de s + \int_0^t \frac{\norm[L^\infty]{N_1(V(s))} + \norm[L^\infty]{\partial_x N_2(V(s))}}{\re^{\mu_0 (t-s)}} \de s \\
		&\qquad \lesssim \left(\int_0^t (1+t-s)^{-1}(1+s)^{-\frac{1}{p}} \de s + \int_0^t (1+t-s)^{-\frac{1}{2}} (1+s)^{-\frac{1}{p}-\frac{1}{2}} \de s\right) \eta_1(t)^2 \\
		&\qquad \qquad + \, \left(\int_0^t \re^{-\mu_0 (t-s)} (1+s)^{-\frac{1}{p}} \de s + \int_0^t \re^{-\mu_0 (t-s)} (1+s)^{-\frac{1}{p} - \frac{1}{2}}\right) \eta_1(t)(\eta_1(t) + \eta_2(t)) \de s \\
		&\qquad \lesssim (1+t)^{-\frac{1}{2p}} \eta_1(t)(\eta_1(t) + \eta_2(t)),
	\end{align*}
	and
	\begin{align*}
		&\norm[L^\infty]{\int_0^t \re^{(t-s)L} \partial_x (N_1(V(s)) + \partial_x N_2(V(s))) \de s} \\
		&\qquad \lesssim \int_0^t \frac{\norm[L^\infty]{\partial_x N_1(V(s))} + \norm[L^\infty]{N_2(V(s))}}{1+t-s} \de s + \int_0^t \left(\frac{\norm[L^\infty]{\partial_x N_1(V(s))}}{\re^{\mu_0 (t-s)}} + \frac{\norm[L^\infty]{\partial_x N_2(V(s))}}{\re^{\mu_0 (t-s)}\sqrt{t-s}}\right) \de s \\
		&\qquad \lesssim \left(\int_0^t (1+t-s)^{-1} (1+s)^{-\frac{1}{2}-\frac{1}{p}} \de s\right) \eta_1(t)(\eta_1(t) + \eta_2(t)) \\
		&\qquad\qquad + \left(\int_0^t \re^{-\mu_0 (t-s)} (1+s)^{-\frac{1}{2}-\frac{1}{p}}\left(1+\frac{1}{\sqrt{t-s}}\right) \de s\right) \eta_1(t)(\eta_1(t) + \eta_2(t)) \\
		&\qquad \lesssim (1+t)^{-\frac{1}{2p} - \frac{1}{2}} \eta_1(t)(\eta_1(t) + \eta_2(t)).
	\end{align*}
	Together with the linear estimate~\eqref{eq:linEstLoc}, these estimates then yield
	\begin{align*}
		\norm[L^\infty]{\partial_x^m V(t)} &\lesssim (1+t)^{-\frac{1}{2p} - \frac{m}{2}} (\varepsilon + \eta_1(t)(\eta_1(t) + \eta_2(t))),
	\end{align*}
	for $m = 0,1$, which implies that there exists a constant $C_1 > 1$ such that
	\begin{align}
		\label{eq:etaEstLoc1}
		\eta_1(t) \leq C_1 (\varepsilon + \eta_1(t) (\eta_1(t) + \eta_2(t))).
	\end{align}
	
	It remains to provide estimates on $\partial_x^2 r(t)$ and $\phi(t)$.
	Thus, proceeding as in the proof of Theorem~\ref{thm:explong} and exploiting that $\norm[L^\infty]{\partial_x V(s)} \lesssim \smash{(1+s)^{-\frac{1}{2}-\frac{1}{2p}}} \eta_1(t)$ for $s \in [0,t]$, we find
	\begin{equation}
		\label{eq:restLoc}
		\begin{aligned}
			\norm[L^\infty]{\partial_x^2 r(t)} &\lesssim \re^{-2(1-q^2)t} \norm[W^{2,\infty}]{r_0} + \int_0^t \re^{-2(1-q^2)(t-s)} (t-s)^{-\frac{1}{2}} (1+s)^{-\frac{1}{2}-\frac{1}{2p}} \eta_1(t) \de s \\
			&\lesssim (1+t)^{-\frac{1}{2}-\frac{1}{2p}} (\varepsilon + \eta_1(t)).
		\end{aligned}
	\end{equation}
	On the other hand, applying the estimates $\norm[L^\infty]{r(s)} \lesssim (1+s)^{-\frac{1}{2p}} \eta_1(t)$ and $\norm[L^\infty]{(\partial_x \phi(s))(\partial_x r(s))} \lesssim \smash{(1+s)^{-\frac{1}{p}-\frac{1}{2}}} \eta_1(t)^2$ for $s \in [0,t]$ to the Duhamel formula~\eqref{duhamelphir} and using~\eqref{e:estphir}, we establish
	\begin{equation}
		\label{eq:phiEstLoc}
		\begin{aligned}
			\norm[L^\infty]{\phi(t)} &\lesssim \norm[L^\infty]{\phi_0} + \int_0^t (t-s)^{-\frac{1}{2}} (1+s)^{-\frac{1}{2p}} \eta_1(t) \de s + \int_0^t (1+s)^{-\frac{1}{p}-\frac{1}{2}} \eta_1(t)^2 \de s \\
			&\lesssim (1+t)^{\frac{1}{2}-\frac{1}{2p}} (\varepsilon + \eta_1(t)),
		\end{aligned}
	\end{equation}
    where we recall $\eta_1(t) \leq 1$.
	
	Combining the estimates~\eqref{eq:restLoc} and~\eqref{eq:phiEstLoc} yields a constant $C_2 > 1$ such that
	\begin{align}
		\label{eq:etaEstLoc2}
		\eta_2(t) \leq C_2 (\varepsilon + \eta_1(t)).
	\end{align}
	Finally, analogous to the proof of Theorem~\ref{thm:explong} we combine~\eqref{eq:etaEstLoc1} into~\eqref{eq:etaEstLoc2} to obtain the key inequality~\eqref{e:etaestLoc} with $C = 6C_1 C_2^2$.
\end{proof}

\begin{remark}{\upshape
	We highlight that, although Theorem~\ref{thm:partloc} requires partial localization of the initial perturbation, this is only needed for the estimates of the \emph{linear} part $\re^{tL} V_0$ in~\eqref{duhamelV}. The nonlinear argument in the proof of Theorem~\ref{thm:partloc} employs a pure $L^\infty \rightarrow L^\infty$-scheme.
}\end{remark}

\subsection{Proof of Theorem~\ref{thm:global}} \label{sec:q0}

For $q = 0$ the equation for the phase $\phi$ in~\eqref{eq:modGL2} decouples on the linear level. Moreover, the refined semigroup estimate~\eqref{eq:refined2} exhibits quadratic temporal decay instead of just linear decay. These two facts are sufficient to enhance Theorem~\ref{thm:explong} so that it holds globally in time and yields stronger decay for the phase variable without requiring a partial localization as in Theorem~\ref{thm:partloc}.

\begin{proof}[Proof of Theorem~\ref{thm:global}]
We follow the proofs of Theorems~\ref{thm:explong} and~\ref{thm:partloc} and only outline the differences. First, the template function $\eta(t)$ is adapted to accommodate the better decay rates of the phase function. That is, we define $\eta \colon [0,T) \to \R$ by $\eta(t) = \eta_1(t) + \eta_2(t)$ with
\begin{align*} \eta_1(t) &= \sup_{0 \leq s \leq t} \left(\norm[L^\infty]{V(s)} + \sqrt{1+s} \norm[L^\infty]{\partial_x V(s)} + (1+s)^{-\alpha}\left(\norm[L^\infty]{\phi(s)} + \sqrt{1+s}\norm[W^{1,\infty}]{\partial_x \phi(s)}\right)\right), \\
\eta_2(t) &= \sup_{0 \leq s \leq t} \sqrt{1+s} \norm[L^\infty]{\partial_x^2 r(s)}.\end{align*}
Our goal is to prove that there exists a constant $C > 1$ such that for all $t \in [0,T)$ with $\eta(t) \leq 1$ we have
\begin{align}
\eta(t) \leq C\left(\epsilon + \eta(t)\left(\eta(t) + \epsilon\right)\right), \label{e:etaest0}
\end{align}
which yields the desired result as in the proof of Theorem~\ref{thm:partloc}.

We now establish the estimates which lead to the key inequality~\eqref{e:etaest0}. To this end, assume $t \in [0,T)$ is such that $\eta(t) \leq 1$. We observe that the nonlinearity in system~\eqref{eq:pertModGLAbbreviation} can be written as
\begin{align} N_1(V) + \partial_x N_2(V) = \begin{pmatrix} \widetilde{N}_1(r,\phi) - \widetilde{N}_2(r) \\ \partial_x \widetilde{N}_3(r,\phi) \\ \gamma \partial_x^2 \widetilde{N}_2(r)\end{pmatrix}, \label{e:nonldecomp}\end{align}
with
\begin{align*}
\widetilde{N}_1(r,\phi) &= (\partial_x r)^2 - (\partial_x \phi)^2, \qquad \widetilde{N}_2(r) = \re^{2r}-2r-1, \qquad \widetilde{N}_3(r,\phi) = 2\left(\partial_x \phi\right)\left(\partial_x r\right).
\end{align*}
Hence, using~\eqref{eq:semdecomp} and~\eqref{e:nonldecomp} we rewrite the Duhamel formulation~\eqref{duhamelV} for $V$ as
\begin{align}
\begin{split}
\partial_x^m V(t) &= \re^{t L} \partial_x^m V_0 + \int_0^t S_c(t-s) \partial_x^m \begin{pmatrix} \widetilde{N}_1(r(s),\phi(s)) - \widetilde{N}_2(r(s)) \\ \partial_x \widetilde{N}_3(r(s),\phi(s)) \\ \gamma \partial_x^2 \widetilde{N}_2(r(s))\end{pmatrix} \de s\\
&\qquad + \, \int_0^t S_e(t-s) \partial_x^m \left(N_1(V(s)) + \partial_x N_2(V(s))\right) \de s,
\end{split}
\label{duhamelV0}
\end{align}
for $m = 0,1$. The nonlinearities in~\eqref{duhamelV0} obey the estimates~\eqref{eq:nonl1} and
\begin{align}
\begin{split}
\norm[L^\infty]{\widetilde{N}_1(r(s),\phi(s))} &\lesssim \norm[L^\infty]{\partial_x r(s)}^2 + \norm[L^\infty]{\partial_x \phi(s)}^2 \lesssim (1+s)^{-1+2\alpha} \eta_1(t)^2,\\
\norm[L^\infty]{\widetilde{N}_2(r(s))} &\lesssim \norm[L^\infty]{r(s)}^2 \lesssim \eta_1(t)^2,\\
\norm[L^\infty]{\widetilde{N}_3(r(s),\phi(s))} &\lesssim \norm[L^\infty]{\partial_x r(s)}\norm[L^\infty]{\partial_x \phi(s)} \lesssim (1+s)^{-1+\alpha} \eta_1(t)^2,\\
\norm[L^\infty]{\partial_x \widetilde{N}_2(r(s))} &\lesssim \norm[L^\infty]{r(s)}\norm[L^\infty]{\partial_x r(s)} \lesssim \frac{\eta_1(t)^2}{\sqrt{1+s}},\\
\norm[L^\infty]{\partial_x \widetilde{N}_3(r(s),\phi(s))} &\lesssim \norm[L^\infty]{\partial_x^2 r(s)}\norm[L^\infty]{\partial_x \phi(s)} + \norm[L^\infty]{\partial_x r(s)}\norm[L^\infty]{\partial_x^2 \phi(s)}\\ &\lesssim (1+s)^{-1+\alpha} \eta_1(t)(\eta_1(t)+\eta_2(t)),
\end{split} \label{eq:nonl3}
\end{align}
for all $s \in [0,t]$, where we use $\eta(t) \leq 1$. Thus, we bound~\eqref{duhamelV0} with the aid of Lemmas~\ref{lem:difest} and~\ref{lem:expest} and the estimates~\eqref{eq:nonl1} and~\eqref{eq:nonl3} and arrive at
\begin{align} \label{e:estV00}
\begin{split}
\norm[L^\infty]{V(t)} &\lesssim \norm[L^\infty]{V_0} + \int_0^t \frac{\norm[L^\infty]{\widetilde{N}_1(r(s),\phi(s))}}{1+t-s} \de s + \int_0^t \frac{\norm[L^\infty]{\widetilde{N}_2(r(s))}}{(1+t-s)^2} \de s\\
&\qquad + \, \int_0^t \frac{\norm[L^\infty]{\widetilde{N}_3(r(s),\phi(s))}}{\sqrt{1+t-s}} \de s + \int_0^t \left(\frac{\norm[L^\infty]{N_1(V(s))}}{\re^{\mu_0 (t-s)}} + \frac{\norm[L^\infty]{N_2(V(s))}}{\re^{\mu_0 (t-s)}\sqrt{t-s}}\right) \de s\\
&\lesssim \norm[L^\infty]{V_0} + \int_0^t \frac{\eta_1(t)^2}{(1+t-s)(1+s)^{1-2\alpha}} \de s + \int_0^t \frac{\eta_1(t)^2}{(1+t-s)^2} \de s\\
&\qquad + \, \int_0^t \frac{\eta_1(t)^2}{\sqrt{1+t-s}(1+s)^{1-\alpha}} \de s + \int_0^t \frac{\eta_1(t)^2}{\re^{\mu_0 (t-s)}} \left(1 + \frac{1}{\sqrt{1+s}\sqrt{t-s}}\right) \de s\\
&\lesssim \epsilon + \eta_1(t)^2,
\end{split}
\end{align}
and
\begin{align} \label{e:estV01}
\begin{split}
\norm[L^\infty]{\partial_x V(t)} &\lesssim \frac{\norm[W^{1,\infty}]{V_0}}{\sqrt{1+t}} + \int_0^t \frac{\norm[L^\infty]{\widetilde{N}_1(r(s),\phi(s))}}{\left(1+t-s\right)^{\frac{3}{2}}} \de s + \int_0^t \frac{\norm[L^\infty]{\partial_x \widetilde{N}_2(r(s))}}{\left(1+t-s\right)^2} \de s\\
&\quad \ + \, \int_0^t \frac{\norm[L^\infty]{\widetilde{N}_3(r(s),\phi(s))}}{1+t-s} \de s + \int_0^t \left(\frac{\norm[L^\infty]{\partial_x N_1(V(s))}}{\re^{\mu_0 (t-s)}} + \frac{\norm[L^\infty]{\partial_x N_2(V(s))}}{\re^{\mu_0 (t-s)}\sqrt{t-s}}\right) \de s\\
&\lesssim \frac{\norm[W^{1,\infty}]{V_0}}{\sqrt{1+t}} + \int_0^t \frac{\eta_1(t)^2}{(1+t-s)^{\frac{3}{2}}(1+s)^{1-2\alpha}} \de s + \int_0^t \frac{\eta_1(t)^2}{(1+t-s)^2 \sqrt{1+s}} \de s\\
&\quad \ + \, \int_0^t \frac{\eta_1(t)^2}{(1+t-s)(1+s)^{1-\alpha}} \de s + \int_0^t \frac{\eta_1(t)\left(\eta_1(t) + \eta_2(t)\right)}{\re^{\mu_0 (t-s)}\sqrt{1+s}} \left(1 + \frac{1}{\sqrt{t-s}}\right) \de s\\
&\lesssim \frac{\epsilon + \eta_1(t)\left(\eta_1(t) + \eta_2(t)\right)}{\sqrt{1+t}}.
\end{split}
\end{align}

Subsequently, we start bounding the phase variable $\phi$ and its derivatives, which satisfy the Duhamel formulation
\begin{align*}
\partial_x^m \phi(t) &= \re^{t \partial_x^2} \partial_x^m \phi_0 + \int_0^t \re^{(t-s) \partial_x^2} \partial_x^m \widetilde{N}_3(s) \de s,
\end{align*}
for $m = 0,1,2$. Thus, applying the standard semigroup estimates~\eqref{e:estphir} and the nonlinear estimate~\eqref{eq:nonl3}, we obtain
\begin{align} \label{e:phiest}
\begin{split}
\norm[L^\infty]{\phi(t)} &\lesssim \norm[L^\infty]{\phi_0} + \int_0^t \frac{\eta_1(t)^2}{(1+s)^{1-\alpha}} \de s \lesssim \left(\epsilon + \eta_1(t)^2\right)(1+t)^\alpha,\\
\norm[L^\infty]{\partial_x^m \phi(t)} &\lesssim \frac{\norm[W^{2,\infty}]{\phi_0}}{\sqrt{1+t}} + \int_0^t \frac{\eta_1(t)\left(\eta_1(t) + \eta_2(t)\right)}{\sqrt{t-s}(1+s)^{1-\alpha}} \de s \lesssim \frac{\epsilon + \eta_1(t)\left(\eta_1(t) + \eta_2(t)\right)}{(1+t)^{\frac{1}{2}-\alpha}},
\end{split}
\end{align}
for $m = 1,2$.

Combining the estimates~\eqref{e:estV00},~\eqref{e:estV01} and~\eqref{e:phiest} yields a constant $C_1 > 1$ such that
\begin{align}
\eta_1(t) \leq C_1\left(\epsilon + \eta_1(t)\left(\eta_1(t) + \eta_2(t)\right)\right). \label{e:etaest10}
\end{align}
On the other hand, employing the bound~\eqref{e:rest} on $\partial_x^2 r(t)$ we find a constant $C_2 > 1$ such that
\begin{align}
\eta_2(t) \leq C_2\left(\epsilon + \eta_1(t)\right). \label{e:etaest20}
\end{align}
Finally, as in the proof of Theorem~\ref{thm:explong} we combine~\eqref{e:etaest20} into~\eqref{e:etaest10} to obtain the key inequality~\eqref{e:etaest0} with $C = 6C_1C_2^2$.
\end{proof}

\section{Nonlinear stability against bounded perturbations in the real Ginzburg-Landau equation} \label{sec:GL}

In this section we establish nonlinear stability of the periodic roll solutions~\eqref{rolls} in the real Ginzburg-Landau equation~\eqref{GL} against $C_{\mathrm{ub}}^m$-perturbations. We exploit that by setting $\gamma = 0$ in the modified Ginzburg-Landau system~\eqref{eq:solsmodGL} we retrieve the real Ginzburg-Landau equation in the first component. Moreover, for $\gamma = 0$ the stability condition~\eqref{e:spec} reduces to the well-known Eckhaus condition $q^2 < \frac{1}{3}$. Hence, our nonlinear stability results in the real Ginzburg-Landau equation~\eqref{GL} are a direct consequence of the results established in~\S\ref{sec:mainresults}. Thus, inserting the perturbed periodic roll solution in polar form
\begin{align*}
	A(t,x) = \sqrt{1-q^2}\, \re^{\ri q x + r(x,t) + i\phi(x,t)},
\end{align*}
into the real Ginzburg-Landau equation~\eqref{GL}, we find the perturbation equation
\begin{align}
	\label{eq:GL}
\begin{split}
	\partial_t r &= \partial_x^2 r - 2q \partial_x \phi - \left(1-q^2\right) \left(\re^{2r} - 1\right) + (\partial_x r)^2 - (\partial_x \phi)^2, \\
	\partial_t \phi &= \partial_x^2 \phi + 2q \partial_x r + 2(\partial_x r)(\partial_x\phi),
\end{split}
\end{align}
for which we establish the following result.

\begin{theorem}\label{thm:stabGL}
	Let $q^2 < 1/3$.
	Then, there exist $M_0, \varepsilon_0 > 0$ such that for all $\varepsilon \in (0,\varepsilon_0)$ and $(r_0, \phi_0) \in C^2_{\mathrm{ub}}(\R,\R^2)$ satisfying
	\begin{align*}
		\norm[W^{2,\infty}]{r_0} + \norm[W^{2,\infty}]{\phi_0} < \varepsilon,
	\end{align*}
	there exists a global mild solution
	\begin{align*}
		(r,\phi) \in C\big([0,\infty),C_{\mathrm{ub}}^2(\R,\R^2)\big),
	\end{align*}
	of~\eqref{eq:GL} with initial condition $(r(0),\phi(0)) = (r_0,\phi_0)$ enjoying the estimates
	\begin{align*}
		\norm[L^\infty]{\phi(t)} \leq M_0 \varepsilon, \qquad \norm[L^\infty]{r(t)} + \norm[L^\infty]{\partial_x \phi(t)} \leq \frac{M_0 \varepsilon}{\sqrt{1+t}}, \qquad \norm[W^{1,\infty}]{\partial_x r(t)} + \norm[L^\infty]{\partial_x^2 \phi(t)} \leq \frac{M_0 \varepsilon}{1+t},
	\end{align*}
	for all $t \geq 0$.
\end{theorem}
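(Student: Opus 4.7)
The plan is to deduce Theorem~\ref{thm:stabGL} as an immediate corollary of Theorem~\ref{thm:partloc}, following the strategy indicated in the text preceding the statement. The key observation is that the $(r,\phi)$-subsystem of the modified Ginzburg-Landau perturbation system~\eqref{eq:modGL2} reduces exactly to~\eqref{eq:GL} once one sets $\gamma=0$ and $B\equiv 0$.

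First, I would fix any $D>0$ (say $D=1$) and observe that for $\gamma=0$ the $B$-equation in~\eqref{eq:modGL2} collapses to the linear heat equation $\partial_t B = D\partial_x^2 B$. By uniqueness of the heat semigroup on $C_{\mathrm{ub}}(\R)$, the initial datum $B_0\equiv 0$ forces $B(t)\equiv 0$ for all $t\geq 0$, so the coupling term $B$ appearing in the $r$-equation of~\eqref{eq:modGL2} vanishes identically and the remaining $(r,\phi)$-equations coincide with~\eqref{eq:GL}.

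Second, I would verify that the spectral hypothesis~\eqref{e:spec} at $\gamma=0$ is equivalent to the Eckhaus condition $q^2<1/3$. Indeed, the first inequality in~\eqref{e:spec} evaluated at $\gamma=0$ simplifies to $D(1-3q^2)/(1-q^2)>0$, which for $D>0$ and $q^2\in(-1,1)$ holds precisely when $q^2<1/3$, and the second inequality in~\eqref{e:spec} is itself $q^2<1/3$. Hence the hypotheses of Theorem~\ref{thm:stabGL} imply those of Theorem~\ref{thm:partloc}.

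Third, I would apply Theorem~\ref{thm:partloc} with $p=1$, $\gamma=0$, $D=1$, and extended initial data $(r_0,\phi_0,B_0)=(r_0,\phi_0,0)$, noting that $\norm[L^1\cap W^{1,\infty}]{B_0}=0$ so the required smallness condition reduces to $\norm[W^{2,\infty}]{r_0}+\norm[W^{2,\infty}]{\phi_0}<\varepsilon$, which is exactly the hypothesis of Theorem~\ref{thm:stabGL}. Theorem~\ref{thm:partloc} then produces a global mild solution $(r,\phi,B)\in C([0,\infty),C^2_{\mathrm{ub}}\times C^2_{\mathrm{ub}}\times C^1_{\mathrm{ub}})$ of~\eqref{eq:modGL2}; uniqueness in Proposition~\ref{prop:local} forces $B\equiv 0$, so $(r,\phi)$ is a global mild solution of~\eqref{eq:GL}. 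Specializing the estimates of Theorem~\ref{thm:partloc} to $p=1$ yields $\norm[L^\infty]{\phi(t)}\lesssim\varepsilon$, $\norm[L^\infty]{r(t)}+\norm[L^\infty]{\partial_x\phi(t)}\lesssim\varepsilon(1+t)^{-1/2}$ and $\norm[W^{1,\infty}]{\partial_x r(t)}+\norm[L^\infty]{\partial_x^2\phi(t)}\lesssim\varepsilon(1+t)^{-1}$, which are precisely the bounds claimed in Theorem~\ref{thm:stabGL}.

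No step is genuinely hard: the entire analytic content has already been carried out in the proof of Theorem~\ref{thm:partloc} via the pure $L^\infty$-scheme, and the present theorem is merely its specialization to the case of a trivial conserved quantity. The only routine check is the reduction of the spectral condition, addressed above.
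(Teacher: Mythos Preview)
Your proposal is correct and follows exactly the paper's own proof: apply Theorem~\ref{thm:partloc} with $\gamma=0$, $p=1$, and $B_0=0$, noting that $\gamma=0$ forces $B(t)\equiv 0$ so that~\eqref{eq:modGL2} reduces to~\eqref{eq:GL}. The additional details you supply (fixing $D=1$, explicitly checking that~\eqref{e:spec} reduces to the Eckhaus condition) are fine elaborations of what the paper leaves implicit.
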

\begin{proof}
	The statement is a direct application of Theorem~\ref{thm:partloc} after setting $\gamma = 0$, $p = 1$ and $B_0 = 0$. Here, we note that $B_0 = 0$ implies $B(t) = 0$ for all $t \geq 0$, since it holds $\gamma = 0$.
\end{proof}
 
The linearization of the perturbation equation~\eqref{eq:GL} has a relatively simple structure with one diffusive and one exponentially damped mode, which can be easily separated. The lowest-order terms in the expansion of the exponentially damped mode are captured by the variable
\begin{align} v(t) = r(t) + \frac{q}{1-q^2} \partial_x \phi(t), \label{v:exp}\end{align}
cf.~\eqref{r:exp}. Due to the linear exponential damping, the decay of $v(t)$ is dictated by the nonlinearities in~\eqref{eq:GL}, which are at least quadratic in $r(t)$ and $\partial_x \phi(t)$. Hence, we establish as a corollary of Theorem~\ref{thm:stabGL} that $v(t)$ decays at rate $t^{-1}$.

\begin{corollary} \label{cor:GL}
	Let $q^2 < 1/3$. Let $(r,\phi) \in C\big([0,\infty), C^2_{\mathrm{ub}}(\R,\R^2)\big)$ be the solution of~\eqref{eq:GL}, established in Theorem~\ref{thm:stabGL}. Then, there exists $\smash{\widetilde M_0} > 0$ such that the variable $v(t)$, defined by~\eqref{v:exp}, satisfies the estimate
	\begin{align*}
		\norm[L^\infty]{v(t)} \leq  \frac{\widetilde M_0 \varepsilon}{1+t},
	\end{align*}
	for all $t \geq 0$.
\end{corollary}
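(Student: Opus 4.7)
The plan is to derive an autonomous evolution equation for $v$ whose linear part generates an exponentially damped semigroup and then close the argument via a Duhamel formula using the decay rates from Theorem~\ref{thm:stabGL}. Starting from~\eqref{eq:GL}, substituting $r = v - \frac{q}{1-q^2}\partial_x \phi$ into the zeroth-order terms $-2(1-q^2)r - 2q\partial_x \phi$ (which combine to $-2(1-q^2)v$) and using $\partial_x^2 r = \partial_x^2 v - \frac{q}{1-q^2}\partial_x^3 \phi$, a direct calculation yields
\begin{align*}
\partial_t v = L_0 v + c\,\partial_x^3 \phi + \mathcal{N}(r,\phi),
\end{align*}
where $L_0 = -2(1-q^2) + \frac{1+q^2}{1-q^2}\partial_x^2$, $c = -\frac{2q^3}{(1-q^2)^2}$, and
\begin{align*}
\mathcal{N}(r,\phi) = -\left(1-q^2\right)\left(\re^{2r} - 1 - 2r\right) + (\partial_x r)^2 - (\partial_x \phi)^2 + \frac{2q}{1-q^2}\partial_x\left[(\partial_x r)(\partial_x \phi)\right]
\end{align*}
collects contributions that are at least quadratic in the perturbation. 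The key structural gain is that $L_0$ is an exponentially damped heat operator: one has $\norm[L^\infty]{\re^{tL_0} f} \lesssim \re^{-2(1-q^2)t}\norm[L^\infty]{f}$ and $\norm[L^\infty]{\partial_x \re^{tL_0} f} \lesssim t^{-1/2}\re^{-2(1-q^2)t}\norm[L^\infty]{f}$, in analogy with~\eqref{e:estphir}.

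Next, I would apply the Duhamel formula. To bypass the regularity issue posed by $\partial_x^3 \phi$---which is not directly controlled in Theorem~\ref{thm:stabGL}---I rewrite $\partial_x^3 \phi = \partial_x(\partial_x^2 \phi)$ and move the outer derivative onto the semigroup, obtaining
\begin{align*}
v(t) = \re^{tL_0} v_0 + c\int_0^t \partial_x \re^{(t-s)L_0}\left(\partial_x^2 \phi(s)\right) \de s + \int_0^t \re^{(t-s)L_0} \mathcal{N}(r(s),\phi(s)) \de s.
\end{align*}
Each term is then bounded using Theorem~\ref{thm:stabGL}. The linear contribution is $\lesssim \re^{-2(1-q^2)t}\varepsilon \lesssim \varepsilon/(1+t)$. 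For the $\partial_x^3 \phi$ contribution, using $\norm[L^\infty]{\partial_x^2\phi(s)} \leq M_0\varepsilon/(1+s)$ and the standard splitting $\int_0^t = \int_0^{t/2} + \int_{t/2}^t$ applied to the convolution kernel $(t-s)^{-1/2}\re^{-2(1-q^2)(t-s)}$, one obtains a bound of order $\varepsilon/(1+t)$. For the nonlinearity, each summand of $\mathcal{N}$ admits an $L^\infty$-bound of order $\varepsilon^2/(1+s)$ by Theorem~\ref{thm:stabGL}---the slowest decay comes from $(\partial_x \phi)^2$ and $\re^{2r}-1-2r = \mathcal{O}(r^2)$---and convolving with $\re^{-2(1-q^2)(t-s)}$ again yields $\varepsilon^2/(1+t)$. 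Summing the three contributions gives $\norm[L^\infty]{v(t)} \leq \widetilde{M}_0 \varepsilon/(1+t)$ for $\varepsilon_0$ sufficiently small.

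The main technical point requiring care is the rigorous justification of the $v$-equation and its Duhamel formulation, since $\partial_x^3 \phi$ is only a formal object in the $C^2_{\mathrm{ub}}$ setting of Theorem~\ref{thm:stabGL}. After the integration by parts above, only $\partial_x^2 \phi \in C([0,\infty), C_{\mathrm{ub}})$ enters the final identity, so all quantities are well-defined. The identity itself can be obtained by first assuming $(r_0, \phi_0) \in C^3_{\mathrm{ub}}$---so that the solution of~\eqref{eq:GL} is classical and of class $C^3_{\mathrm{ub}}$ in $x$ throughout, see the remark following Theorem~\ref{thm:global}---deriving it pointwise in that smoother setting, and then passing to the limit by approximating general $C^2_{\mathrm{ub}}$-data by $C^3_{\mathrm{ub}}$-data and invoking continuous dependence on initial data. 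Apart from this regularity bookkeeping, no spectral decomposition is needed: the exponential damping of $L_0$, already visible at the PDE level through the linear term $-2(1-q^2)r$ in the $r$-equation of~\eqref{eq:GL}, does all the work.
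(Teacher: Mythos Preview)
Your argument is correct and follows the same overall strategy as the paper: derive a damped heat equation for $v$, write the Duhamel formula, and estimate all terms using the decay rates from Theorem~\ref{thm:stabGL}. The one notable difference lies in how you organize the linear terms. You substitute $r = v - \tfrac{q}{1-q^2}\partial_x\phi$ everywhere, which modifies the diffusion coefficient to $\tfrac{1+q^2}{1-q^2}$ and leaves a residual linear forcing $c\,\partial_x^3\phi$ that you then handle by shifting a derivative onto the semigroup. The paper instead leaves the term $\tfrac{2q^2}{1-q^2}\partial_x^2 r$ untouched (i.e.\ does not express it through $v$), so its operator is simply $L_0 = \partial_x^2 - 2(1-q^2)$ and the forcing contains $\partial_x^2 r$ rather than $\partial_x^3\phi$. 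Since $\norm[L^\infty]{\partial_x^2 r(s)} \le M_0\varepsilon/(1+s)$ is already part of Theorem~\ref{thm:stabGL}, the paper can bound the Duhamel integral directly, without integration by parts and without the approximation-in-$C^3_{\mathrm{ub}}$ argument you sketch. Both routes yield the same $\varepsilon/(1+t)$ bound; the paper's is slightly more economical, while yours makes the role of the exponentially damped mode more transparent at the PDE level.
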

\begin{proof}
One readily observes that $v$ satisfies the equation
	\begin{align*}
		\partial_t v &= \partial_x^2 v - 2(1-q^2) v -(1-q^2) (\re^{2r} - 2r - 1) + (\partial_x r)^2 - (\partial_x \phi)^2 \\
		&\qquad - \dfrac{2q}{1-q^2}\left(q \partial_x^2 r + \partial_x((\partial_x r) (\partial_x \phi))\right).
	\end{align*}
	Thus, defining $L_0 = \partial_x^2 - 2(1-q^2)$ we find the Duhamel formula
	\begin{align}
		v(t) = \re^{t L_0} v(0) + \int_0^t \re^{(t-s)L_0} \curlN_v(r(s),\phi(s)) \de s,
		\label{eq:duhamelOptimalDecayGL}
	\end{align}
	with
	\begin{align*}
		\curlN_v(r, \phi) &= -\left(1-q^2\right) \left(\re^{2r} - 2r - 1\right) + (\partial_x r)^2 - (\partial_x \phi)^2 - \dfrac{2q}{1-q^2}\left(q \partial_x^2 r + \partial_x\left((\partial_x r) (\partial_x \phi\right))\right).
	\end{align*}
	The semigroup bound~\eqref{e:estphir} and the decay estimates provided by Theorem~\ref{thm:stabGL} yield
	\begin{align*}
		\norm[L^\infty]{v(t)} &\lesssim \re^{-2(1-q^2)t} \norm[L^\infty]{v(0)} \\
		&\qquad + \int_0^t \re^{-2(1-q^2)(t-s)}\left(\norm[L^\infty]{\partial_x^2 r(s)} + \left(\norm[W^{2,\infty}]{r(s)} + \norm[W^{1,\infty}]{\partial_x \phi(s)}\right)^2\right)\de s\\ &\lesssim \frac{\epsilon}{1+t},
	\end{align*}
	which proves the statement.
\end{proof}

\begin{remark}{\rm
Note that at $q = 0$ the $r$-component coincides to leading order with the exponentially damped mode, i.e.~we have $v(t) = r(t)$. So, Corollary~\ref{cor:GL} implies that the decay rate on the $r$-component in Theorem~\ref{thm:stabGL} can be improved by a factor $t^{-1/2}$ for $q = 0$.
}\end{remark}

\begin{remark}{ \rm
Although it is possible to identify a similar expansion as in~\eqref{v:exp} for the exponentially damped mode in the setting of the modified Ginzburg-Landau system~\eqref{eq:modGL}, we cannot expect that such a mode decays on the nonlinear level. Indeed, the nonlinearity in~\eqref{eq:pertModGLAbbreviation} contains terms which are quadratic in $r(t)$ and $\partial_x \phi(t)$, for which no decay can be expected, cf.~Theorem~\ref{thm:explong} and Remark~\ref{rem:optimality}. 
 }\end{remark}

\appendix

\section{Auxiliary results} \label{app:aux}

In order to establish $L^\infty$-bounds on the diffusive and exponentially damped part of the semigroup $\re^{tL}$ generated by the linearization $L$ of~\eqref{eq:pertModGLAbbreviation}, we need to bound convolution operators of the form
\begin{align}\left(S f\right)(x) = \int_{\R} \curlG(x-y) f(y) \de y, \label{eq:defS}\end{align}
cf.~\eqref{eq:semg}, where $\curlG(z)$ is defined through its Fourier transform by
\begin{align}\curlG(z) = \int_\R \curlA(k) \re^{\ri k z} \de k,\label{eq:defocs}\end{align}
cf.~\eqref{eq:greenG}, with $\curlA(k)$ being some exponentially localized smooth function, whose derivatives are also exponentially localized. Of course, Young's convolution inequality can be employed to bound $\norm[L^\infty]{Sf}$ by $\norm[L^q]{\curlG} \norm[L^p]{f}$ for $p,q \in [1,\infty]$ with $\frac{1}{p} + \frac{1}{q} = 1$. Since $\curlA$ belongs to the Schwartz class, so does $\curlG$. A precise estimate on the $L^q$-norm of $\curlG$ arises through integration by parts, where one exploits the oscillatory character of the integral~\eqref{eq:defocs}. We emphasize that taking $q = 1$ and $p = \infty$ leads to the pure $L^\infty$-estimate $\norm[L^\infty]{Sf} \leq \norm[L^1]{\curlG} \norm[L^\infty]{f}$, which is pivotal for the analysis in this paper, since we are working with \emph{bounded} perturbations. We note that the $L^p \to L^\infty$-estimate $\norm[L^\infty]{Sf} \leq \norm[L^q]{\curlG} \norm[L^p]{f}$ for $p \in [1,\infty)$ is only used in~\S\ref{sec:partloc}, where one considers bounded, partially localized perturbations.

In this appendix we take care of the technical estimates needed to obtain $L^\infty$-bounds on the components $S_c(t)$ and $S_e(t)$ of the semigroup $\re^{tL}$, cf.~\S\ref{sec:semdecomp}. That is, we derive $L^\infty$-bounds for convolution products of the form~\eqref{eq:defS}, where we assume that $\curlA$ has a specific Gaussian structure induced by the operator $L$, i.e.~it is of the form $\smash{\re^{t\Lambda(k)}}P(k)$, where $P(k)$ is some smooth matrix function exhibiting at most polynomial growth and $\Lambda(k)$ reflects the properties of the Fourier symbol of a second-order differential operator, cf.~Lemmas~\ref{lem:spec_cont} and~\ref{lem:specl}. Due to the Gaussian localization of $\curlA$, it is sufficient to integrate by parts twice in~\eqref{eq:defS} and thus only control derivatives of $\curlA$ up to second order.

We first establish the estimates needed to bound low frequencies, which corresponds to the case where $\curlA$ is compactly supported.

\begin{lemma}[Low-frequency estimates] \label{lem:semigroupEstimate1}
Let $p \in [1,\infty)$, $n,j \in \N_0$ and $m \in \N$ with $0 \leq j \leq m$. Let $k_0 > 0$. Let $\Lambda \colon (-k_0,k_0) \to \C^{m \times m}$ be given by
\begin{align*}\Lambda(k) = S(k)^{-1}\begin{pmatrix} \Lambda_c(k) & 0_{j \times (m-j)} \\ 0_{(m-j) \times j} & \Lambda_s(k) \end{pmatrix}S(k), \end{align*}
with $\Lambda_c \in C^2\big((-k_0,k_0),\C^{j \times j}\big), \Lambda_s \in C^2\big((-k_0,k_0),\C^{(m-j) \times (m-j)}\big)$ and $S \in C^2\big((-k_0,k_0),\operatorname{GL}_m(\C)\big)$ satisfying
\begin{itemize}
\item[i.] $\Lambda_c(0) = \Lambda_c'(0) = 0$;
\item[ii.] $\sup \Re \sigma(\Lambda_c(k)) < 0$ for $k \in (-k_0,k_0) \setminus \{0\}$;
\item[iii.] $\sup \Re \sigma(\Lambda_c''(0)) < 0$;
\item[iv.] $\sup \Re \sigma(\Lambda_s(k)) < 0$ for $k \in (-k_0,k_0)$.
\end{itemize}
Furthermore, let $P_s,P_c \colon \R \to \C^{m \times m}$ be given by
\begin{align*}P_c(k) &= S(k)^{-1} \begin{pmatrix} I_{j \times j} & 0_{j \times (m-j)} \\ 0_{(m-j) \times j} & 0_{(m-j) \times (m-j)} \end{pmatrix} S(k) \varpi(k),\\
P_s(k) &= S(k)^{-1} \begin{pmatrix} 0_{j \times j} & 0_{j \times (m-j)} \\ 0_{(m-j) \times j} & I_{(m-j) \times (m-j)} \end{pmatrix} S(k) \varpi(k),\end{align*}
where $\varpi \in C^2\big(\R,\C^{m \times m}\big)$ satisfies $\overline{\operatorname{supp}(\varpi)} \subset (-k_0,k_0)$. Then, there exists $\mu_0 > 0$ such that the estimates
\begin{align}
\norm[L^\infty]{\int_\R \int_\R \re^{t\Lambda(k)} k^n P_c(k) \re^{\ri k(\cdot-y)} \de k f(y) \de y} &\lesssim (1+t)^{-\frac{n}{2}} \norm[L^\infty]{f},
\label{eq:semigroupEstimate1}\\
\norm[L^\infty]{\int_\R \int_\R \re^{t\Lambda(k)} P_s(k) \re^{\ri k(\cdot-y)} \de k f(y) \de y} &\lesssim \re^{-\mu_0 t} \norm[L^\infty]{f},
\label{eq:semigroupEstimate2}
\end{align}
hold for all $f \in C_{\mathrm{ub}}(\R,\C^m)$ and $t \geq 0$.
Additionally, the estimate
\begin{align}
	\norm[L^\infty]{\int_\R \int_\R \re^{t\Lambda(k)} k^n P_c(k) \re^{\ri k(\cdot-y)} \de k f(y) \de y} \lesssim (1+t)^{-\frac{1}{2p} - \frac{n}{2}} \norm[L^p \cap L^\infty]{f},
	\label{eq:semigroupEstimateLocalized}
\end{align}
holds for all $f \in L^p(\R, \C^m) \cap C_{\mathrm{ub}}(\R, \C^m)$ and $t \geq 0$.
\end{lemma}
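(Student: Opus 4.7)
The plan is to recognize each of \eqref{eq:semigroupEstimate1}--\eqref{eq:semigroupEstimateLocalized} as a convolution with the matrix-valued Green's kernel
$$\curlG_\#(t,z) := \int_\R \re^{t\Lambda(k)} k^n P_\#(k) \re^{\ri k z} \de k, \qquad \# \in \{c, s\},$$
(with $n = 0$ when $\# = s$) and to bound this kernel in an appropriate Lebesgue norm, after which Young's convolution inequality $\norm[L^\infty]{\curlG_\#(t, \cdot) \ast f} \leq \norm[L^q]{\curlG_\#(t, \cdot)} \norm[L^p]{f}$ for H\"older conjugates $p, q$ closes each estimate. Thus \eqref{eq:semigroupEstimate1} reduces to $\norm[L^1]{\curlG_c(t,\cdot)} \lesssim (1+t)^{-n/2}$, \eqref{eq:semigroupEstimateLocalized} to $\norm[L^q]{\curlG_c(t,\cdot)} \lesssim (1+t)^{-n/2 - 1/(2p)}$, and \eqref{eq:semigroupEstimate2} to $\norm[L^1]{\curlG_s(t,\cdot)} \lesssim \re^{-\mu_0 t}$.

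The centerpiece is a Gaussian pointwise bound $\|\curlG_c(t,z)\| \lesssim t^{-(n+1)/2} \re^{-\alpha z^2/t}$ for $t \geq 1$ and some $\alpha > 0$, which I would extract via the parabolic rescaling $k = \ell/\sqrt{t}$. After substitution we obtain
$$\curlG_c(t,z) = t^{-(n+1)/2} \int_\R \re^{t\Lambda(\ell/\sqrt{t})} \varphi(\ell/\sqrt{t}) \ell^n \re^{\ri \ell z/\sqrt{t}} \de \ell,$$
where $\varphi(k) := S(k)^{-1}\operatorname{diag}(I_{j\times j}, 0)S(k) \varpi(k)$ is $C^2$ with compact support in $(-k_0,k_0)$. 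Assumption (i) together with the Taylor expansion of $\Lambda_c$ gives $t\Lambda_c(\ell/\sqrt{t}) = \tfrac12 \Lambda_c''(0)\ell^2 + O(|\ell|^3/\sqrt{t})$, and (iii) combined with standard matrix-exponential estimates yields $\|\re^{t\Lambda_c(\ell/\sqrt{t})}\| \lesssim \re^{-c\ell^2}$ for $|\ell| \leq \delta\sqrt{t}$ with $\delta > 0$ sufficiently small. For $\delta\sqrt{t} \leq |\ell| \leq k_0\sqrt{t}$ the compact annulus in which $\ell/\sqrt{t}$ lives and (ii) deliver a uniform exponential bound $\re^{-\mu t}$ that dwarfs any polynomial factor. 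Two integrations by parts in $\ell$ inside the integral produce the factor $\re^{-\alpha z^2/t}$; chain-rule derivatives of $\varphi$ and of the exponent carry at least one power of $1/\sqrt{t} \leq 1$, which the Gaussian weight $\re^{-c\ell^2}$ renders absolutely integrable. Integrating the resulting pointwise bound in $z$ delivers $\norm[L^q]{\curlG_c(t,\cdot)} \lesssim t^{-n/2 - 1/(2p)}$ for $t \geq 1$, since $\|\re^{-\alpha z^2/t}\|_{L^q_z} \sim t^{1/(2q)}$. For $t \leq 1$, $\varphi$ has fixed compact support and $\re^{t\Lambda(k)}$ is uniformly bounded, so integrating by parts twice in $k$ yields $(1+z^2)\|\curlG_c(t,z)\| \lesssim 1$ uniformly, which produces the required $L^q$-bound at bounded cost.

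The stable kernel $\curlG_s$ is handled more easily: assumption (iv) together with compactness of $\overline{\operatorname{supp}(\varpi)} \subset (-k_0, k_0)$ supplies $\mu_0 > 0$ with $\sup_{k \in \operatorname{supp}(\varpi)} \Re \sigma(\Lambda_s(k)) < -2\mu_0$, so $\|\re^{t\Lambda_s(k)}\| \lesssim \re^{-2\mu_0 t}$ uniformly in $k \in \operatorname{supp}(\varpi)$. Two integrations by parts against the compactly supported $C^2$ weight yield the factor $(1+z^2)^{-1}$ at the cost of at most two derivatives landing on $\re^{t\Lambda_s(k)}$, producing a polynomial prefactor $(1+t^2)$ that is absorbed into the loss of half the exponential rate. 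This gives $\norm[L^1]{\curlG_s(t,\cdot)} \lesssim \re^{-\mu_0 t}$ as required.

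The main difficulty lies in the critical estimate, specifically the bookkeeping after rescaling: one must verify that the $\ell$-derivatives $\partial_\ell^2 [\re^{t\Lambda(\ell/\sqrt{t})} \varphi(\ell/\sqrt{t}) \ell^n]$ produced by integration by parts remain absolutely integrable against a single Gaussian majorant, uniformly in $t \geq 1$. The crucial observation is that $\partial_\ell(t\Lambda_c(\ell/\sqrt{t})) = \sqrt{t}\Lambda_c'(\ell/\sqrt{t})$ is at most linear in $\ell$ on $|\ell| \leq \delta\sqrt{t}$ thanks to $\Lambda_c'(0) = 0$ from (i), so the Gaussian suppression granted by (iii) always dominates. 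All remaining steps, including the combination of the small-time and large-time regimes and the specialization to $p = \infty$ or general $p \in [1,\infty)$, are then routine.
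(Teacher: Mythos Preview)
Your strategy---reduce to $L^q$-bounds on the kernels $\curlG_\#$ and extract the $z$-decay via integration by parts against the oscillatory factor---is correct and is precisely what the paper does. There is, however, one genuine slip: two integrations by parts in $\ell$ cannot produce the Gaussian factor $\re^{-\alpha z^2/t}$. Each such integration converts $\re^{\ri\ell z/\sqrt t}$ into $(\sqrt t/(\ri z))\,\re^{\ri\ell z/\sqrt t}$, so two of them yield at best the polynomial weight $(1+z^2/t)^{-1}$, i.e.
\[
\|\curlG_c(t,z)\| \lesssim t^{-(n+1)/2}\bigl(1+z^2/t\bigr)^{-1},\qquad t\ge 1.
\]
A genuine Gaussian pointwise bound would require shifting the $\ell$-contour into the complex plane, which is unavailable under the stated $C^2$ (not analytic) hypotheses. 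Fortunately the polynomial bound already suffices: $(1+z^2/t)^{-1}$ has $L^q_z$-norm $\sim t^{1/(2q)}$ for every $q\in(1,\infty]$, so your claimed $L^q$-estimates on $\curlG_c$ go through unchanged. The paper carries this out in the unscaled variable $k$, writing $1=(1+z^2/t)^{-1}(1+z^2/t)$ and integrating by parts in $k$ to handle the $z^2/t$ term; your parabolic rescaling $k=\ell/\sqrt t$ is an equivalent repackaging of the same computation.

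Two smaller remarks. First, with only $C^2$ data the Taylor remainder in $t\Lambda_c(\ell/\sqrt t)=\tfrac12\Lambda_c''(0)\ell^2+\text{(remainder)}$ is $o(\ell^2)$ uniformly for $|\ell|\le\delta\sqrt t$, not $O(|\ell|^3/\sqrt t)$; this still suffices for the bound $\|\re^{t\Lambda_c(\ell/\sqrt t)}\|\lesssim\re^{-c\ell^2}$ once $\delta$ is small, but your expansion overstates the available regularity. The paper sidesteps this by setting $M(k):=k^{-2}\Lambda_c(k)$ with $M(0)=\tfrac12\Lambda_c''(0)$, observing that $M$ is continuous with spectrum uniformly in the open left half-plane on the compact set $\overline{\operatorname{supp}\varpi}$, and invoking equality of growth and spectral bound for multiplication semigroups to obtain $\|\re^{t\Lambda_c(k)}\|\lesssim\re^{-\mu_1 k^2 t}$ directly---no case splitting required. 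Second, for~\eqref{eq:semigroupEstimateLocalized} the paper uses the Hausdorff--Young inequality on the Fourier side rather than a pointwise bound followed by $L^q$-integration; both routes give the same rate.
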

\begin{proof}
We start by obtaining estimates on the matrix exponential
\begin{align*} \re^{t \Lambda(k)} =  S(k)^{-1}\begin{pmatrix} \re^{t \Lambda_c(k)} & 0_{j \times (m-j)} \\ 0_{(m-j) \times j} & \re^{t \Lambda_s(k)} \end{pmatrix}S(k),\end{align*}
and its derivatives for $k \in J := \overline{\operatorname{supp}(\varpi)}$ and $t \geq 0$.

By Taylor's theorem, the assumptions i.-iii.~and the fact that $\Lambda_c$ is $C^2$, the matrix function $M \colon J \to \C^{j \times j}$ given by $M(k) = k^{-2} \Lambda_c(k)$ for $k \neq 0$ and $M(0) = \frac{1}{2}\Lambda_c''(0)$ is continuous and there exists $\mu_1 > 0$ such that
\begin{align}\sup \Re \sigma\left(M(k)\right) < -\mu_1, \qquad k \in J,\label{e:specbM}\end{align}
where we use that the eigenvalues of $M(k)$ depend continuously on $k$. To bound the matrix exponential $\re^{t M(k)}$ we collect some facts from~\cite[Chapter A-III, \S7]{ARE84}. First, since $J$ is compact and $M$ is continuous, the multiplication operator $A \colon f \mapsto M f$ generates a strongly continuous semigroup $(T(t))_{t \geq 0}$ on $C(J,\C^j)$, which is given by
$$(T(t)f)(k) = \re^{t M(k)} f(k), \qquad k \in J.$$
Second, the growth bound of the semigroup $(T(t))_{t \geq 0}$ coincides with the spectral bound of $A$. Third, the spectrum of $A$ is given by
$$\sigma(A) = \bigcup_{k \in J} \sigma(M(k)).$$
Combining the latter three observations with~\eqref{e:specbM} yields that the growth bound of the semigroup $(T(t))_{t \geq 0}$ is smaller than $-\mu_1$, which implies $\snorm{\re^{s M(k)}} \lesssim \re^{-\mu_1 s}$ for all $s \geq 0$ and $k \in J$. In particular, taking $k \in J$ and $s = k^2 t$ with $t \geq 0$ in the previous, we arrive at
\begin{align} \snorm{\re^{t \Lambda_c(k)}} \lesssim \re^{-\mu_1 k^2 t}, \qquad k \in J, \, t \geq 0. \label{e:supb1}\end{align}

Since $\Lambda_s$ is continuous, the eigenvalues of $\Lambda_s(k)$ depend continuously on $k$. Hence, as $J$ is compact, there exist $\mu_0, \mu_2 > 0$ such that
\begin{align}\sup \Re \sigma(\Lambda_s(k)) < -\mu_2 < -\mu_0, \qquad k \in J. \label{e:specbL}\end{align}
The bound on the matrix exponential $\re^{t \Lambda_s(k)}$ for $k \in J$ and $t \geq 0$ is obtained analogously as the bound on the $\re^{t M(k)}$. Thus, using~\eqref{e:specbL} and the fact that $\Lambda_s$ is continuous, we arrive at
\begin{align} \snorm{\re^{t \Lambda_s(k)}} \lesssim \re^{-\mu_2 t}, \qquad k \in J, \, t \geq 0.\label{e:supb3}\end{align}

Using the standard integral representation of the Fr\'echet derivative of the matrix exponential, we compute
\begin{align} \label{e:matrixexp}
\begin{split}
\partial_k \left(\re^{t \Lambda_i(k)}\right) &= t \int_0^1 \re^{l t \Lambda_i(k)} \Lambda_i'(k) \re^{(1-l) t \Lambda_i(k)} \de l,\\
\partial_k^2 \left(\re^{t \Lambda_i(k)}\right) &= t^2 \int_0^1 \int_0^1 l \re^{l \ell t \Lambda_i(k)} \Lambda_i'(k) \re^{l(1-\ell) t \Lambda_i(k)} \Lambda_i'(k) \re^{(1-l) t \Lambda_i(k)} \de \ell \de l\\
&\qquad + \, t^2 \int_0^1 \int_0^1 (1-l) \re^{l t \Lambda_i(k)} \Lambda_i'(k) \re^{(1-l) \ell t \Lambda_i(k)} \Lambda_i'(k) \re^{(1-\ell)(1-l) t \Lambda_i(k)} \de \ell \de l\\
&\qquad + \, t \int_0^1 \re^{l t \Lambda_i(k)} \Lambda_i''(k) \re^{(1-l) t \Lambda_i(k)} \de l,
\end{split}
\end{align}
for $i = s,c$. By the mean value theorem and assumption i.~it holds $\snorm{\Lambda_c'(k)} \lesssim |k|$. Thus, taking norms in the above expressions and using~\eqref{e:supb1} and $\Lambda_c \in C^2$ yields
\begin{align}
\begin{split}
\snorm{\partial_k \left(\re^{t \Lambda_c(k)}\right)} &\lesssim |k| t \re^{-\mu_1 k^2 t},\\
\snorm{\partial_k^2 \left(\re^{t \Lambda_c(k)}\right)} &\lesssim t\left(1 + k^2 t\right) \re^{-\mu_1 k^2 t},
\end{split}\label{e:supb2} \qquad k \in J, \, t \geq 0.
\end{align}
On the other hand, we use the estimate~\eqref{e:supb2} and the fact that $\Lambda_s \in C^2$ to bound the derivatives
\begin{align}
\begin{split}
\snorm{\partial_k \left(\re^{t \Lambda_s(k)}\right)} &\lesssim t \re^{-\mu_2 t},\\
\snorm{\partial_k^2 \left(\re^{t \Lambda_s(k)}\right)} &\lesssim t\left(1 + t\right) \re^{-\mu_2 t},
\end{split}\label{e:supb4} \qquad k \in J, \, t \geq 0.
\end{align}

All in all, observing
\begin{align} \re^{t \Lambda(k)} P_c(k) = S(k)^{-1}\begin{pmatrix} \re^{t \Lambda_c(k)} & 0_{j \times (m-j)} \\ 0_{(m-j) \times j} & 0_{(m-j) \times (m-j)} \end{pmatrix}S(k) \varpi(k), \label{e:matexp}\end{align}
and using~\eqref{e:supb1},~\eqref{e:supb2} and $\varpi, S \in C^2$, we arrive at
\begin{align}
\begin{split}
\snorm{\re^{t \Lambda(k)}P_c(k)} &\lesssim \re^{-\mu_1 k^2 t}, \\
\snorm{\partial_k \left(\re^{t \Lambda(k)}P_c(k)\right)} &\lesssim (1+|k| t)\re^{-\mu_1 k^2 t},\\
\snorm{\partial_k^2 \left(\re^{t \Lambda(k)}P_c(k)\right)} &\lesssim \left(1 + |k| t + t\left(1+k^2 t\right)\right)\re^{-\mu_1 k^2 t},
\end{split} \qquad k \in J, \, t \geq 0. \label{e:supb11}\end{align}
Similarly, using~\eqref{e:supb3},~\eqref{e:supb4} and $\varpi, S \in C^2$, we obtain
\begin{align}
\begin{split}
\snorm{\re^{t \Lambda(k)}P_s(k)} &\lesssim \re^{-\mu_2 t},\\
\snorm{\partial_k \left(\re^{t \Lambda(k)}P_s(k)\right)} &\lesssim (1+t)\re^{-\mu_2 t},\\
\snorm{\partial_k^2 \left(\re^{t \Lambda(k)}P_s(k)\right)} &\lesssim \left(1+ t\right)^2 \re^{-\mu_2 t},
\end{split} \qquad k \in J, \, t \geq 0. \label{e:supb21}\end{align}

Having obtained suitable bounds on the matrix exponential $\re^{t \Lambda(k)}$, we proceed by establishing the estimate~\eqref{eq:semigroupEstimate1}. We distinguish between the cases $t \in [0,1]$ and $t > 1$. For the case $t \in [0,1]$, we rewrite
\begin{align}
\int_\R \re^{t \Lambda(k)} k^n P_c(k) \re^{\ri kz} \de k
&= \dfrac{1}{1+z^2} \left(\int_\R \re^{t \Lambda(k)} k^n P_c(k)  \re^{\ri kz} \de k + \int_\R \re^{t\Lambda(k)} k^n P_c(k) z^2 \re^{\ri kz} \de k\right), \label{eq:split1}
\end{align}
with $z \in \R$. Using $J \subset (-k_0,k_0)$ and~\eqref{e:supb11} we conclude that the first integral in~\eqref{eq:split1} is bounded uniformly for $z \in \R$. For the second integral we exploit $z^2 \re^{\ri kz} = -\partial_k^2 \re^{\ri kz}$. Thus, we use integration by parts to obtain
\begin{align}
\begin{split}
-\int_\R \re^{t\Lambda(k)} k^n P_c(k) z^2 \re^{\ri kz} \de k &= \int_\R \partial_k^2 \left(k^n \re^{t\Lambda(k)} P_c(k)\right) \re^{\ri kz} \de k\\
&= \int_\R \left(k^n \partial_k^2 \left(\re^{t\Lambda(k)} P_c(k)\right) + 2 nk^{n-1} \partial_k \left(\re^{t\Lambda(k)} P_c(k)\right)\right.\\
&\qquad\qquad \left. + \, n(n-1)k^{n-2} \re^{t\Lambda(k)} P_c(k) \right) \re^{\ri kz} \de k,
\end{split} \label{eq:split2}
\end{align}
which is bounded uniformly for $z \in \R$ by~\eqref{e:supb11} noting that $t \leq 1$ and $P_c \in C^2$ has compact support. Therefore, for all $t \in [0,1]$ and $f \in C_{\mathrm{ub}}(\R,\C^m)$ we obtain
	\begin{align*}
		\norm[L^\infty]{\int_\R \int_\R \re^{t\Lambda(k)} k^n P_c(k) \re^{\ri k(\cdot-y)} \de k f(y) \de y} &\lesssim \norm[L^\infty]{f} \int_\R (1+z^2)^{-1} \de z \lesssim (1+t)^{-\frac{n}{2}} \norm[L^\infty]{f}.
	\end{align*}
	
Now take $t > 1$. Then, we rewrite
\begin{align}
\int_\R \re^{t \Lambda(k)} k^n P_c(k) \re^{\ri kz} \de k &= \dfrac{1}{1+\frac{z^2}{t}} \left(\int_\R \re^{t\Lambda(k)} k^n P_c(k) \re^{\ri kz} \de k + \int_\R \re^{t\Lambda(k)} k^n P_c(k) \dfrac{z^2}{t} \re^{\ri kz} \de k\right), \label{eq:split3}
\end{align}
for $z \in \R$. To bound the first integral in~\eqref{eq:split3} we use~\eqref{e:supb11} and obtain
\begin{align*}
\snorm{\int_\R \re^{t\Lambda(k)} k^n P_c(k) \re^{\ri kz} \de k} \lesssim \int_{J} \snorm{k^n \re^{t \Lambda(k)}} \de k \lesssim \int_\R |k|^n \re^{-\mu_1 k^2 t} \de k \lesssim t^{-\frac{n+1}{2}},
\end{align*}
for $z \in \R$. To bound the second integral in~\eqref{eq:split3}, we again exploit~\eqref{eq:split2}. Thus, using~\eqref{e:supb11} and noting that $t > 1$ and $P_c \in C^2$ has compact support, we obtain
\begin{align*}
&\snorm{\int_\R \re^{t\Lambda(k)} k^n P_c(k) \dfrac{z^2}{t} \re^{\ri kz} \de k} = \frac{1}{t} \snorm{\int_{J} \partial_k^2 \left(k^n \re^{t\Lambda(k)} P_c(k)\right) \re^{\ri kz} \de k}\\ &\quad \lesssim \int_\R \re^{-\mu_1 k^2 t} \left(|k|^n \left(t^{-1} + |k| + \left(1+k^2 t\right)\right) + n|k|^{n-1} (t^{-1} +|k|) + n(n-1)|k|^{n-2} t^{-1} \right) \de k \lesssim t^{-\frac{n+1}{2}},
\end{align*}
for $z \in \R$. Therefore, we derive for all $t > 1$ and $f \in C_{\mathrm{ub}}(\R,\C^m)$
\begin{align*}
\norm[L^\infty]{\int_\R \int_\R \re^{t\Lambda(k)} k^n P_c(k) \re^{\ri k(\cdot-y)} \de k f(y) \de y} &\lesssim t^{-\frac{n+1}{2}} \norm[L^\infty]{f} \int_\R \left(1 + \dfrac{z^2}{t}\right)^{-1} \de z\\ &\lesssim (1+t)^{-\frac{n}{2}} \norm[L^\infty]{f},
\end{align*}
which proves~\eqref{eq:semigroupEstimate1}.

To establish the estimate~\eqref{eq:semigroupEstimate2} we rewrite
\begin{align*}
\int_\R \re^{t \Lambda(k)} P_s(k) \re^{\ri kz} \de k
= \dfrac{1}{1+z^2} \left(\int_\R \re^{t \Lambda(k)} P_s(k) \de k - \int_\R \partial_k^2 \left(\re^{t\Lambda(k)} P_s(k)\right) \re^{\ri kz} \de k\right),
\end{align*}
for $t \geq 0$ and $z \in \R$. Then, using $J \subset (-k_0,k_0)$,~\eqref{e:supb21} and the fact that $P_s \in C^2$ we arrive at the bound
\begin{align*}
\snorm{\int_\R \re^{t \Lambda(k)} P_s(k) \re^{\ri kz} \de k} \lesssim \dfrac{(1+t)^2\re^{-\mu_2 t}}{1+z^2}, \qquad t \geq 0, \, z \in \R.
\end{align*}
Therefore, for all $t \geq 0$ and $f \in C_{\mathrm{ub}}(\R,\C^m)$ we obtain
	\begin{align*}
		\norm[L^\infty]{\int_\R \int_\R \re^{t\Lambda(k)} P_s(k) \re^{\ri k(\cdot-y)} \de k f(y) \de y} &\lesssim \norm[L^\infty]{f} \int_\R \frac{(1+t)^2\re^{-\mu_2 t}}{1+z^2} \de z \lesssim \re^{-\mu_0 t}\norm[L^\infty]{f},
	\end{align*}
where we use \eqref{e:specbL} in the last estimate.
This completes the proof of~\eqref{eq:semigroupEstimate2}.

Finally, we establish the estimate~\eqref{eq:semigroupEstimateLocalized}.
If $t \in [0,1]$ the estimate~\eqref{eq:semigroupEstimate1} implies~\eqref{eq:semigroupEstimateLocalized}.
Therefore, let $t > 1$. We denote by $q \in (1,\infty]$ the H\"older conjugate of $p$ so that $\frac{1}{p}+\frac{1}{q} = 1$.
Then, Young's convolution inequality yields
\begin{align*}
	\norm[L^\infty]{\int_\R \int_\R \re^{t\Lambda(k)} k^n P_c(k) \re^{\ri k \left(\cdot - y\right)} \de k f(y) \de y} &\lesssim \norm[q]{\int_\R \re^{t\Lambda(k)} k^n P_c(k) \re^{\ri k \cdot} \de k} \norm[p]{f} \\
	& \lesssim \norm[p]{k \mapsto \re^{t\Lambda(k)} k^n P_c(k)} \norm[p]{f} \\
	& \lesssim (1+t)^{-\frac{n}{2}-\frac{1}{2p}} \norm[p]{f},
\end{align*}
where we used that the Fourier transform is an $L^p$-$L^q$-isomorphism in the second inequality and the estimate~\eqref{e:supb11} in the last inequality.
Combining this with the estimate for $t \in [0,1]$ completes the proof of~\eqref{eq:semigroupEstimateLocalized} and of the Lemma.
\end{proof}

Following the same strategy we also obtain the corresponding high-frequency result.

\begin{lemma}[High-frequency estimates] \label{lem:semigroupEstimate3}
Let $m \in \N$ and $\Lambda \in C^2\big(\R,\C^{m \times m}\big)$. Assume
\begin{itemize}
\item[i.] $\sup \Re \sigma(\Lambda(k)) < 0$ for all $k \in \R$;
\item[ii.] $M_0 = \displaystyle \lim_{\ell \to 0} \ell^2 \Lambda\left(\ell^{-1}\right)$ exists and satisfies $\sup \Re \sigma(M_0) < 0$;
\item[iii.] $\snorm{\Lambda'(k)} \lesssim 1+|k|$ and $\snorm{\Lambda''(k)} \lesssim 1$ for $k \in \R$.
\end{itemize}
Furthermore, let $P_0 \in \C^{m\times m}$. Then, there exists $\mu_0 > 0$ such that the estimate
\begin{align}
\norm[L^\infty]{\int_\R \int_{\R} \re^{\Lambda(k) t} k^n P_0 \re^{\ri k(\cdot-y)} \de k f(y) \de y} \lesssim \re^{-\mu_0 t} \left(1 + t^{-\frac{n}{2}}\right) \norm[L^\infty]{f},
\label{eq:semigroupEstimate3}
\end{align}
holds for all $f \in C_{\mathrm{ub}}(\R,\C^m)$, $t > 0$ and $n = 0,1$.
\end{lemma}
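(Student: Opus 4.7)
The plan is to mimic the strategy of Lemma \ref{lem:semigroupEstimate1}: first obtain uniform pointwise bounds on the matrix exponential $\re^{t\Lambda(k)}$ and its first two $k$-derivatives, then exploit the oscillatory character of $\re^{\ri k(x-y)}$ via integration by parts to produce an $L^1$-in-$z$ bound on the convolution kernel, and finally conclude via Young's convolution inequality. The new twist, compared with Lemma \ref{lem:semigroupEstimate1}, is that $k$ now ranges over the whole real line, so the pointwise bounds must reflect both the spectral behavior of $\Lambda$ at individual $k \in \R$ (assumption (i)) and its behavior at infinity (assumption (ii)).

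\textbf{Step 1 (Pointwise semigroup bound).} The heart of the argument is to show there exists $\mu_0 > 0$ with
\begin{align*}
\snorm{\re^{t\Lambda(k)}} \lesssim \re^{-\mu_0(1+k^2)t}, \qquad k \in \R,\, t \geq 0.
\end{align*}
To this end, define $\widehat\Lambda(k) := \Lambda(k)/(1+k^2)$. Assumption (i) yields $\sup \Re \sigma(\widehat\Lambda(k)) < 0$ for every $k \in \R$, and assumption (ii) implies $\widehat\Lambda(k) \to M_0$ as $|k| \to \infty$ with $\sup \Re \sigma(M_0) < 0$. Hence $\widehat\Lambda$ extends continuously to the two-point compactification $\R \cup \{-\infty,+\infty\}$, and its range is a compact subset of the open set $\{A \in \C^{m \times m} : \sup \Re \sigma(A) < 0\}$. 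Applying the multiplication-operator argument from the proof of Lemma \ref{lem:semigroupEstimate1} on this compact parameter space yields $\snorm{\re^{s\widehat\Lambda(k)}} \lesssim \re^{-\mu_0 s}$ uniformly in $k \in \R$ and $s \geq 0$; substituting $s = (1+k^2)t$ gives the claim.

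\textbf{Step 2 (Derivative bounds).} Inserting assumption (iii) into the integral representations \eqref{e:matrixexp} (with $\Lambda_i = \Lambda$) and using Step 1, one obtains, after slightly reducing $\mu_0$ to absorb polynomial prefactors,
\begin{align*}
\snorm{\partial_k \re^{t\Lambda(k)}} \lesssim t(1+|k|)\re^{-\mu_0(1+k^2)t}, \qquad \snorm{\partial_k^2 \re^{t\Lambda(k)}} \lesssim \bigl(t + t^2(1+k^2)\bigr)\re^{-\mu_0(1+k^2)t}.
\end{align*}

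\textbf{Step 3 (Oscillatory integral and $L^1$-in-$z$ bound).} Set $\curlG(z) := \int_\R \re^{t\Lambda(k)} k^n P_0 \re^{\ri kz}\de k$ and decompose
\begin{align*}
\left(1 + \frac{z^2}{t}\right) \curlG(z) = \curlG(z) - \frac{1}{t}\int_\R \partial_k^2\!\left(\re^{t\Lambda(k)} k^n\right) P_0 \re^{\ri kz}\de k,
\end{align*}
where the two integrations by parts are justified by the Gaussian decay of the integrand (so that boundary terms vanish). Combining the pointwise estimates of Steps 1 and 2 with the Gaussian integrals $\int_\R |k|^j \re^{-\mu_0 k^2 t}\de k \lesssim t^{-(j+1)/2}$, one finds
\begin{align*}
\snorm{(1 + z^2/t)\,\curlG(z)} \lesssim \re^{-\mu_0 t}\bigl(t^{-(n+1)/2} + t^{(1-n)/2}\bigr), \qquad z \in \R,\, t > 0,\, n \in \{0,1\}.
\end{align*}
Using $\int_\R (1+z^2/t)^{-1}\de z = \pi\sqrt t$ then gives $\norm[L^1]{\curlG} \lesssim \re^{-\mu_0 t}(t^{-n/2} + t^{1-n/2})$. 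For $t \in (0,1]$ this is $\lesssim t^{-n/2}$, and for $t \geq 1$ the exponential factor absorbs all polynomial terms in $t$, so (after a harmless relabeling of $\mu_0$) it is $\lesssim \re^{-\mu_0 t}$. The estimate \eqref{eq:semigroupEstimate3} then follows from Young's convolution inequality.

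\textbf{Main obstacle.} The crux is Step 1. The strict spectral bound from assumption (i) alone does not yield a uniform-in-$k$ bound on $\re^{t\Lambda(k)}$, because of potential non-normality of $\Lambda(k)$; the compactification trick through $\widehat\Lambda = \Lambda/(1+k^2)$, enabled by assumption (ii), is what reduces the problem to a compact parameter space on which the multiplication-operator argument of Lemma \ref{lem:semigroupEstimate1} applies verbatim.
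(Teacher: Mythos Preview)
Your proof is correct and follows essentially the same route as the paper. The only difference is cosmetic: in Step~1 the paper splits $\R$ into a compact interval $[-\ell_0^{-1},\ell_0^{-1}]$ (handled via assumption~(i)) and its complement (handled by the substitution $\ell=k^{-1}$, $s=k^2t$ applied to $M(\ell)=\ell^2\Lambda(\ell^{-1})$ on $[-\ell_0,\ell_0]$), and then combines the two bounds into $\snorm{\re^{t\Lambda(k)}}\lesssim\re^{-\mu_0 t-\mu_2 k^2 t}$; your compactification via $\widehat\Lambda=\Lambda/(1+k^2)$ on $\R\cup\{\pm\infty\}$ achieves the same bound in one stroke. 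Steps~2 and~3 coincide with the paper's argument.
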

\begin{proof}
We start by obtaining estimates on the matrix exponential $\re^{t \Lambda(k)}$ and its derivatives for $k \in \R$ and $t \geq 0$. By assumption ii.~and the fact that $\Lambda$ is continuous, there exists $\ell_0 > 0$ such that the matrix function $M \colon [-\ell_0,\ell_0] \to \C^{m \times m}$ given by $M(\ell) = \ell^2 \Lambda(\ell^{-1})$ for $\ell \neq 0$ and $M(0) = M_0$ is continuous and there exists $\mu_1 > 0$ such that
\begin{align}\sup \Re \sigma\left(M(\ell)\right) < -\mu_1, \qquad \ell \in [-\ell_0,\ell_0].\label{e:specbM2}\end{align}
Hence, as in the proof of Lemma~\ref{lem:semigroupEstimate1}, the estimate
\begin{align*} \snorm{\re^{s M(\ell)}} \lesssim \re^{-\mu_1 s}, \qquad \ell \in [-\ell_0,\ell_0], \, s \geq 0.\end{align*}
follows by~\eqref{e:specbM2} and the fact that $M$ is continuous. Thus, taking $\ell = k^{-1}$ and $s = k^2 t$ with $t \geq 0$ and $k \in \R \setminus (-\ell_0^{-1},\ell_0^{-1})$ in the previous yields
\begin{align*} \snorm{\re^{t \Lambda(k)}} \lesssim \re^{-\mu_1 k^2 t}, \qquad k \in \R \setminus \left(-\ell_0^{-1},\ell_0^{-1}\right), \, t \geq 0.\end{align*}
Similarly, by continuity of $\Lambda$ and assumption i., there exists $\widetilde{\mu}_1 > 0$ such that
\begin{align*} \snorm{\re^{t \Lambda(k)}} \lesssim \re^{-\widetilde{\mu}_1 t}, \qquad k \in \left[-\ell_0^{-1},\ell_0^{-1}\right], \, t \geq 0.\end{align*}
Combining the latter two estimates yields $\mu_0,\mu_2 > 0$ such that
\begin{align} \snorm{\re^{t \Lambda(k)}} \lesssim \re^{-\mu_0 t - \mu_2 k^2 t}, \qquad k \in \R, \, t \geq 0. \label{e:supb5}\end{align}

Bounds on the derivatives of $\re^{t \Lambda(k)}$ follow by employing the formulas~\eqref{e:matrixexp}, the estimate~\eqref{e:supb5} and assumption iii. All in all, we obtain
\begin{align}
\begin{split}
\snorm{\partial_k \left(\re^{t \Lambda(k)}\right)} &\lesssim t (1+|k|) \re^{-\mu_0 t - \mu_2 k^2 t},\\
\snorm{\partial_k^2 \left(\re^{t \Lambda(k)}\right)} &\lesssim t\left(1 + (1+|k|)^2 t\right) \re^{-\mu_0 t - \mu_2 k^2 t},
\end{split}\label{e:supb6} \qquad k \in \R, \, t \geq 0.
\end{align}

We are now in the position to establish the estimate~\eqref{eq:semigroupEstimate3}. We proceed as in the proof of Lemma~\ref{lem:semigroupEstimate1} for the case $t > 1$. That is, we rewrite
\begin{align*}
\int_{\R} \re^{t \Lambda(k)} k^n P_0 \re^{\ri kz} \de k
&= \left(\int_{\R} \re^{t \Lambda(k)} k^n P_0 \re^{\ri kz} \de k - \frac{2n}{t}\int_{\R} \partial_k \left(\re^{t\Lambda(k)}\right) P_0 \re^{\ri kz} \de k\right.\\ &\qquad\qquad \left.-\,\frac{1}{t}\int_{\R} \partial_k^2 \left(\re^{t\Lambda(k)}\right) P_0 k^n \re^{\ri kz} \de k\right) \left(1+\frac{z^2}{t}\right)^{-1},
\end{align*}
for $n = 0,1$, $t \geq 0$ and $z \in \R$. We bound the integrals on the right-hand side in the latter one by one using~\eqref{e:supb5} and~\eqref{e:supb6}. Thus, we obtain
\begin{align*}
\snorm{\int_{\R} \re^{t \Lambda(k)} k^n P_0 \re^{\ri kz} \de k} &\lesssim \re^{-\mu_0 t} \int_\R |k|^n \re^{-\mu_2 k^2 t} \de k \lesssim t^{-\frac{n+1}{2}} \re^{-\mu_0 t},\\
\snorm{\frac{n}{t}\int_{\R} \partial_k \left(\re^{t\Lambda(k)}\right) P_0 \re^{\ri kz} \de k} &\lesssim n\re^{-\mu_0 t} \int_\R (1+|k|) \re^{-\mu_2 k^2 t} \de k \lesssim \left(1+t^{-\frac{n}{2}}\right) t^{-\frac{1}{2}} \re^{-\mu_0 t},\\
\snorm{\frac{1}{t}\int_{\R} \partial_k^2 \left(\re^{t\Lambda(k)}\right) P_0 k^n \re^{\ri kz} \de k} &\lesssim \re^{-\mu_0 t} \int_\R \left(1 + (1+|k|)^2 t\right) |k|^n \re^{-\mu_2 k^2 t} \de k \lesssim t^{-\frac{n+1}{2}} \re^{-\mu_0 t},
\end{align*}
for $n = 0,1$, $t > 0$ and $z \in \R$. Therefore, we establish
\begin{align*}
\norm[L^\infty]{\int_\R \int_{\R} \re^{t\Lambda(k)} k^n P_0 \re^{\ri k(\cdot-y)} \de k f(y) \de y} &\lesssim \frac{\left(1 + t^{-\frac{n}{2}}\right)}{\sqrt{t}\, \re^{\mu_0 t}} \norm[L^\infty]{f} \int_\R \left(1 + \dfrac{z^2}{t}\right)^{-1} \de z\\ &\lesssim \re^{-\mu_0 t} \left(1 + t^{-\frac{n}{2}}\right) \norm[L^\infty]{f},
\end{align*}
for $n = 0,1$, $t > 0$ and $f \in C_{\mathrm{ub}}(\R,\C^m)$, which completes the proof.
\end{proof}

\bibliographystyle{abbrv}
\bibliography{glbib}

\end{document}